\theoremstyle{definition}
\newtheorem{theorem}{Theorem}[section]
\newtheorem{definition}[theorem]{{{Definition}}}
\newtheorem{example}[theorem]{{{Example}}}
\newtheorem{notation}[theorem]{{{Notation}}}
\newtheorem{remark}[theorem]{{{Remark}}}
\newtheorem{corollary}[theorem]{{{Corollary}}}%[theorem]
\newtheorem{proposition}[theorem]{{{Proposition}}}
\newtheorem{lemma}[theorem]{{{Lemma}}}
\theoremstyle{definition}
\newtheorem*{lem}{Lemma}
\newcommand{\numberset}{\mathbb}
\newcommand{\F}{\numberset{F}}
\newcommand{\C}{\mathcal{C}}
\newcommand{\mC}{\mathcal{C}}
\newcommand{\mP}{\mathcal{P}}
\newcommand{\mB}{\mathcal{B}}
\newcommand{\mM}{\mathcal{M}}
\newcommand{\mU}{\mathcal{U}}
\newcommand{\mQ}{\mathcal{Q}}
\newcommand{\dH}{d^\textnormal{H}}
\newcommand{\sH}{\sigma^\textnormal{H}}
\newcommand{\mE}{\mathcal{E}}
\newcommand{\mV}{\mathcal{V}}
\newcommand{\wt}{\textnormal{wt}}
\newcommand{\rs}{\textnormal{rowsp}}
\newcommand{\Fq}{\F_q}
\newcommand{\Fm}{\F_{q^m}}
\newcommand{\EE}{{\numberset{E}}}
\newcommand{\Var}{{\textnormal{Var}}}
\newcommand\qbin[3]{\binom{#1}{#2}_{#3}}
\newcommand{\HH}{\textnormal{H}}
\newcommand{\rk}{\textnormal{rk}}
\newcommand{\drk}{d^\textnormal{rk}}
\newcommand{\maxrk}{w^\textnormal{rk}}
\DeclareMathOperator{\GL}{GL}
\DeclareMathOperator{\PG}{PG}
\DeclareMathOperator{\PGL}{PGL}
\DeclareMathOperator{\mm}{m}
\DeclareMathOperator{\Ext}{Ext}
\newcommand{\wH}{\omega^\textnormal{H}}
\newcommand{\Fmkd}{[n,k,d]_{q^m/q}}
\newcommand{\Fmk}{[n,k]_{q^m/q}}
\newcommand{\st}{\, : \,}
\definecolor{lgray}{gray}{0.85}
\numberwithin{equation}{section}
\title{\textbf{Linear Cutting Blocking Sets \\ and Minimal Codes in the Rank Metric}}
\author[1]{Gianira N. Alfarano\thanks{Gianira N. Alfarano  is supported by the Swiss National Science Foundation through grant no. 188430.}}
\affil[1]{Institute of Mathematics, University of Zurich, Switzerland}
\author[2]{Martino Borello}
\affil[2]{Universit\'e Paris 8, Laboratoire de G\'eom\'etrie, Analyse et Applications, LAGA,
Universit\'e Sorbonne Paris Nord, CNRS, UMR 7539, France}
\author[3]{Alessandro Neri}%\thanks{Alessandro Neri was supported by the Swiss National Science Foundation through grant no. 187711.}}
\affil[3]{Max-Planck-Institute for Mathematics in the Sciences, Leipzig, Germany}
\author[4]{Alberto Ravagnani}
\affil[4]{Department of Mathematics and Computer Science, Eindhoven University of Technology, the Netherlands}
\date{}
\begin{document}

\maketitle

\begin{abstract}
This work investigates the structure of rank-metric codes in connection with concepts from finite geometry, most notably 
the $q$-analogues of projective systems and blocking sets.
We also illustrate how to associate a classical Hamming-metric code to a rank-metric one, in such a way that various rank-metric properties naturally translate into the homonymous Hamming-metric notions under this correspondence.
The most interesting applications of our results lie in the theory of minimal rank-metric codes, 
which we introduce and study from several angles. Our main contributions are bounds for the parameters of a minimal rank-metric codes, a general existence result based on a combinatorial argument, and an explicit code construction for some parameter sets that uses the notion of a scattered linear set.
Throughout the paper we also show and comment on curious analogies/divergences between the theories of error-correcting codes in the rank and in the Hamming metric.
\end{abstract}

\bigskip

\section{Introduction}
\label{sec:1}

Block codes with the Hamming metric have been extensively (and traditionally) studied in connections with several topics in finite geometry, including arcs, blocking sets,  and modular curves to mention a few; see \cite{MR939470,MR818812,MR1345289} among many others. In the last decade, especially thanks to the advent of network coding \cite{MR1966785,MR1768542,MR2450762,MR2451015}, the novel class of rank-metric codes has been the subject of intense mathematical research. Interesting progress has been recently made in the attempt~of understanding the connection between rank-metric codes and 
finite geometry~\cite{randrianarisoa2020geometric}, yet this link is still not fully understood and rather unexplored.
This paper contributes to fill in this important gap. 

The starting point of our investigation is a connection between rank-metric codes and the $q$-analogues of projective systems. This link has been observed already in~\cite{randrianarisoa2020geometric}, whose contributions we survey with short proofs and extend. Among the various new results, 
we show that the maximum rank of a nondegenerate rank-metric code $\mC \subseteq \F_{q^m}^n$ is $\min\{m,n\}$, a quite simple property that nonetheless has interesting consequences in the theory of anticodes and minimal rank-metric codes (see below).

We then apply the theory of $q$-systems to show how one can associate a Hamming-metric code to a given rank-metric code. 
This correspondence translates various properties of a rank-metric codes into the homonymous properties in the Hamming metric. In particular, the Hamming-metric code associated to the simplex rank-metric code is (essentially) the classical simplex code.

The interplay between the rank and the Hamming metric also motivates us to investigate one of the best-known parameters of a code, namely, its \textit{total weight}. We identify a suitable rank-metric analogue of the total Hamming weight of a code and show that it has a constant value for all nondegenerate rank-metric codes with the same dimension and length.
We then compute its asymptotic behaviour as the field size $q$ tends to infinity, as well as the asymptotic behaviour of its variance under certain assumptions.
This illustrates the general behaviour of these parameters over large finite fields.

Several applications of the above-mentioned results and concepts can be seen in theory of minimal rank-metric codes, a research line which is seemingly unexplored. We call a rank-metric code \textit{minimal} if all its codewords have minimal \textit{rank support}. Minimal rank-metric codes are the natural analogues (in the rank-metric) of minimal Hamming-metric codes, a class of objects that have been extensively studied in connection with finite geometry; see e.g.~\cite{bonini2020minimal,alfarano2019geometric,tang}.

The stepping stone in our approach is a characterization of minimal rank-metric codes via $q$-systems.
The correspondence described above between rank-metric codes and these geometric/combinatorial structures 
induces a correspondence between minimal rank-metric codes and \textit{linear cutting blocking sets}. The latter concept can be regarded as the $q$-analogue of the classical notion of a \textit{cutting blocking set}.

The description of minimal rank-metric codes via the $q$-analogues of cutting blocking sets allows us to
establish a lower bound for their length. More precisely, we find that a minimal rank-metric code $\mC \subseteq \F_{q^m}^n$ of dimension $k$ must satisfy 
\begin{equation}\label{aaa1}
  n \ge k+m-1.  
\end{equation}
We also show that a nondegenerate rank-metric code is minimal if and only if the associated Hamming-metric code is minimal (under the correspondence described earlier). This result naturally connects the theories of minimal codes in the two metrics and makes it possible to transfer/compare results across them. 

A major, rather curious difference between minimal codes in the rank and in the Hamming metric appears to be in the role played by the field size $q$ with respect to bounds and existence results. While in the Hamming metric the field size $q$ is a crucial parameter (e.g., minimal codes do not exist for lengths that are too small compared to a suitable multiple of the field size), most of the bounds and existence results we derive for minimal rank-metric codes do not depend on $q$, even when this quantity explicitly shows up in the computations. We will elaborate more on this later in the paper.

Our main contributions to the theory of minimal codes in the rank metric lies in existence results and constructions, which we now describe very briefly. 
We start by giving simple examples of minimal rank-metric codes (the simplex rank-metric code and nondegenerate codes of very large length). Next, we propose a general construction of 3-dimensional minimal rank-metric codes based on the theory of scattered linear sets. The construction also proves that our lower bound for the length of a minimal rank-metric code is sharp for some (infinite) parameter sets.
We then establish a general existence result for minimal rank-metric codes based on a combinatorial argument. More precisely, we show that a minimal rank-metric code $\mC \subseteq \F_{q^m}^n$ of dimension $k \ge 2$ exists whenever $m \ge 2$ and
\begin{equation} \label{aaa2}
  n \ge 2k+m-2.  
\end{equation}
Comparing~\eqref{aaa1} with \eqref{aaa2} we see that, in general, the existence of minimal rank-metric codes remains an open question only for  $k-1$ values of $n$ (for any fixed $m$, $k$ and $q$).

We conclude the paper by introducing a structural quantity of a $q$-system, which we call its \textit{linearity index}. 
This allows us to attach a new parameter to a rank-metric code $\mC$ via its associated $q$-system. We investigate 
which structural properties of a rank-metric code are captured by its linearity index, showing also a connection with the theory of generalized rank weights.
Finally, we apply these concept to describe further minimal codes in the rank metric. 

\paragraph{Outline.}
The remainder of the paper is organized as follows. Section~\ref{sec:2} contains the preliminaries on rank-metric code, Hamming-metric codes and (cutting) blocking sets. 
In Section~\ref{sec:3} we describe the geometric structure of rank-metric codes via the theory of $q$-systems. In particular, we study simplex rank-metric codes. Section~\ref{sec:4} illustrates how to associate a Hamming-metric code to a rank-metric code and how code properties behave under this correspondence. Sections~\ref{sec:5} and~\ref{sec:6} are entirely devoted to the study of minimal codes in the rank metric: geometric structure, properties, constructions and existence. Finally, the Appendix contains some technical proofs.

\bigskip

%%%%%%%%%%%%%%%%%%%%%%%

\section{Preliminaries}
\label{sec:2}

\subsection{Rank-Metric Codes}
Throughout this paper, $q$ denotes a prime power and $n,m$ are positive integers.
We start by introducing the main object studied in this work, namely, rank-metric codes.

For a vector 
$v \in \F_{q^m}^n$ and an ordered basis $\Gamma=\{\gamma_1,\ldots,\gamma_m\}$ of the field extension
$\F_{q^m}/\F_q$, let $\Gamma(v) \in \F_q^{n \times m}$ be the matrix defined by
$$v_i= \sum_{j=1}^m \Gamma(v)_{ij} \gamma_j.$$
Note that $\Gamma(v)$ is  constructed by simply transposing $v$ and then expanding each entry over the basis $\Gamma$.
The \textbf{$\Gamma$-support} of a vector $v \in \F_{q^m}^n$ is the column space of $\Gamma(v)$.
It is denoted by $\sigma_\Gamma(v) \subseteq \F_q^n$.
The  following result can be obtained by a standard linear algebra argument.
% \ALB{Per un giornale come JCTA, lascerei la prova di questo al lettore.} \GIANI{Va bene toglierla se siete tutti d'accordo. Però secondo me non è completamente banale la prova.}

\begin{proposition} \label{prop:prel}
Let $v \in \F_{q^m}^n$.
\begin{enumerate}
\item We have $\sigma_\Gamma(v)=\sigma_\Gamma(\alpha v)$ for all nonzero
$\alpha \in \F_{q^m}$ and all bases~$\Gamma$.
\item The $\Gamma$-support of $v$ does not depend on the choice of the basis $\Gamma$.
\item For all matrices $A \in \F_q^{n \times n}$ we have $\Gamma(vA)=A^\top \Gamma(v)$.
\end{enumerate}
\end{proposition}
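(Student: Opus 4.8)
The guiding observation is that the map $v \mapsto \Gamma(v)$ simply records, in the $i$-th row, the coordinate vector of $v_i$ with respect to $\Gamma$; since the $\gamma_j$ form an $\F_q$-basis of $\F_{q^m}$, two elements agree if and only if their coordinate vectors agree, so throughout I am free to compare coefficients of the $\gamma_j$. The plan is to show that each of the three operations (right multiplication of $v$ by $A$, scalar multiplication by $\alpha$, change of basis) turns into a fixed matrix multiplication acting on $\Gamma(v)$, and then to read off the stated conclusions. I would prove part (3) first, as it best illustrates the coefficient-matching technique. Writing $v_k = \sum_j \Gamma(v)_{kj}\gamma_j$ and using that $A_{ki}\in\F_q$ commutes with the $\F_q$-expansion, I compute
\[
(vA)_i \;=\; \sum_{k} v_k A_{ki} \;=\; \sum_j\Big(\sum_k A_{ki}\,\Gamma(v)_{kj}\Big)\gamma_j ,
\]
and comparing with $(vA)_i=\sum_j \Gamma(vA)_{ij}\gamma_j$ gives $\Gamma(vA)_{ij}=\sum_k (A^\top)_{ik}\Gamma(v)_{kj}$, that is, $\Gamma(vA)=A^\top\Gamma(v)$. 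The transpose appears precisely because $A$ multiplies $v$ on the right while acting on the row index of $\Gamma(v)$.

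For part (1), I would introduce the matrix $M_\alpha\in\F_q^{m\times m}$ of the $\F_q$-linear map ``multiplication by $\alpha$'' in the basis $\Gamma$, defined by $\alpha\gamma_j=\sum_\ell (M_\alpha)_{j\ell}\gamma_\ell$ (the entries lie in $\F_q$ because $\alpha\gamma_j$ expands over the $\F_q$-basis $\Gamma$). Multiplying $v_i=\sum_j\Gamma(v)_{ij}\gamma_j$ by $\alpha$ and regrouping coefficients of the $\gamma_\ell$ yields $\Gamma(\alpha v)=\Gamma(v)M_\alpha$. Since $\alpha\neq 0$, multiplication by $\alpha$ is a bijection of $\F_{q^m}$, so $M_\alpha$ is invertible; right multiplication by an invertible $m\times m$ matrix over $\F_q$ performs only invertible column operations and therefore leaves the column space unchanged. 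Hence $\sigma_\Gamma(\alpha v)=\colsp(\Gamma(v)M_\alpha)=\colsp(\Gamma(v))=\sigma_\Gamma(v)$.

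Part (2) follows the same template. Given a second basis $\Gamma'=\{\gamma_1',\dots,\gamma_m'\}$, let $B\in\F_q^{m\times m}$ be the invertible change-of-basis matrix with $\gamma_j'=\sum_\ell B_{j\ell}\gamma_\ell$; substituting into $v_i=\sum_j\Gamma'(v)_{ij}\gamma_j'$ and matching coefficients of the $\gamma_\ell$ gives $\Gamma(v)=\Gamma'(v)B$, and invertibility of $B$ lets the column-space argument from part (1) conclude $\colsp(\Gamma(v))=\colsp(\Gamma'(v))$, which is exactly the asserted basis-independence. I do not expect a genuine obstacle here: the content is bookkeeping, and the only real risk is the placement of matrices (left versus right) and the transpose in part (3). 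Organizing all three parts around the single slogan ``an $\F_q$-linear operation on $v$ becomes a fixed matrix multiplication on $\Gamma(v)$, and right multiplication by an invertible matrix preserves the column space'' keeps these conventions under control.
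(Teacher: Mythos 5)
Your proof is correct, and it is precisely the ``standard linear algebra argument'' that the paper invokes without writing out: the paper gives no proof of Proposition~\ref{prop:prel}, leaving it to the reader. Your organization of all three parts around the identities $\Gamma(vA)=A^\top\Gamma(v)$, $\Gamma(\alpha v)=\Gamma(v)M_\alpha$, and $\Gamma(v)=\Gamma'(v)B$, together with the fact that right multiplication by an invertible matrix over $\F_q$ preserves the column space, is exactly what is needed and contains no gaps.
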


\begin{definition}
In the sequel, for $v \in \F_{q^m}^n$ we let 
$\sigma^\rk(v):=\sigma_\Gamma(v)$
be the (\textbf{rank}) \textbf{support} of $v$,
where $\Gamma$ is \textit{any} basis of $\F_{q^m}/\F_q$.
The support is well-defined by Proposition~\ref{prop:prel}. The \textbf{rank} (\textbf{weight}) of a vector $v$ is the $\F_q$-dimension of its support, denoted by $\rk(v)$. 
\end{definition}

Rank-metric codes and their fundamental parameters are defined as follows. In this paper, we follow~\cite{gabidulin1985theory} and only concentrate on rank-metric codes that are linear over $\F_{q^m}$.

\begin{definition}
A (\textbf{rank-metric}) \textbf{code} is an $\F_{q^m}$-linear subspace $\mC \subseteq \F_{q^m}^n$. Its elements are called \textbf{codewords}.
The integer $n$ is the \textbf{length} of the code. The \textbf{dimension} of $\mC$ is the dimension as an $\Fm$-vector space and the \textbf{minimum} (\textbf{rank}) \textbf{distance} of a nonzero code $\mC$ is
$$\drk(\mC):=\min \{\rk(v) \st v \in \mC, \, v \neq 0\}.$$ We also define the minimum distance of the zero code to be $n+1$.
We say that $\mC$ is an $\Fmkd$ code if it has length $n$, dimension $k$ and minimum distance $d$. When the minimum distance is not known or is irrelevant, we write $\Fmk$. A \textbf{generator matrix} of an $\Fmk$ code is a matrix $\smash{G \in \F_{q^m}^{k \times n}}$ whose rows generate $\mC$ as an $\F_{q^m}$-linear space.
Finally, the (\textbf{rank}) \textbf{support} of an $\Fm$-linear rank-metric code $\mC$ is the sum of the supports of its codewords, i.e., 
 $$\sigma^\rk(\mC)= \sum_{v \in \mC}  \sigma^\rk(v).$$
\end{definition}

The support of a rank-metric codes is determined by the supports of any set of generators, as the following simple result shows.

\begin{proposition}\label{prop:supp_propr}
For every $v,w\in\F_{q^m}^n$, we have $\sigma^\rk(v+w) \subseteq \sigma^\rk(v)+\sigma^\rk(w)$. Moreover, if $\mC = \langle c_1,\dots c_t\rangle_{\F_{q^m}}\subseteq \F_{q^m}^n$ is a rank-metric code, then $\sigma^\rk(\mC) = \sigma^\rk(c_1) + \cdots + \sigma^\rk(c_t).$ \qedhere

\end{proposition}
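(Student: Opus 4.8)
The plan is to exploit the fact that, for any fixed basis $\Gamma$, the coordinate map $v \mapsto \Gamma(v)$ is additive, combined with the scalar invariance recorded in Proposition~\ref{prop:prel}(1). Everything reduces to elementary linear algebra over $\F_q$ once these two ingredients are in place.

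First I would establish the subadditivity statement $\sigma^\rk(v+w) \subseteq \sigma^\rk(v) + \sigma^\rk(w)$. Fixing a basis $\Gamma$, the defining relation $v_i = \sum_{j} \Gamma(v)_{ij}\gamma_j$ shows immediately that $\Gamma$ is $\F_q$-linear in $v$; in particular $\Gamma(v+w) = \Gamma(v) + \Gamma(w)$. Since every column of a matrix sum $A+B$ is the sum of the corresponding columns of $A$ and $B$, one has $\colsp(A+B) \subseteq \colsp(A) + \colsp(B)$, and applying this to $A = \Gamma(v)$ and $B = \Gamma(w)$ gives the claim, recalling that $\sigma^\rk(\cdot)$ is by definition the column space of $\Gamma(\cdot)$. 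A straightforward induction then extends this to any finite sum, yielding $\sigma^\rk\!\big(\sum_i u_i\big) \subseteq \sum_i \sigma^\rk(u_i)$.

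For the second statement I would prove the two inclusions separately. The inclusion $\sigma^\rk(c_1) + \cdots + \sigma^\rk(c_t) \subseteq \sigma^\rk(\mC)$ is immediate from the definition $\sigma^\rk(\mC) = \sum_{v\in\mC}\sigma^\rk(v)$, since each generator $c_i$ is itself a codeword. For the reverse inclusion, I would take an arbitrary $v \in \mC$, write $v = \sum_{i=1}^t \lambda_i c_i$ with $\lambda_i \in \F_{q^m}$, and apply the iterated subadditivity to obtain $\sigma^\rk(v) \subseteq \sum_i \sigma^\rk(\lambda_i c_i)$. By Proposition~\ref{prop:prel}(1), $\sigma^\rk(\lambda_i c_i) = \sigma^\rk(c_i)$ whenever $\lambda_i \neq 0$, while $\sigma^\rk(\lambda_i c_i) = \{0\} \subseteq \sigma^\rk(c_i)$ when $\lambda_i = 0$; hence $\sigma^\rk(v) \subseteq \sum_i \sigma^\rk(c_i)$ for every $v \in \mC$, which gives $\sigma^\rk(\mC) \subseteq \sum_i \sigma^\rk(c_i)$.

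The only genuinely delicate point is that the map $\Gamma$ is merely $\F_q$-linear and \emph{not} $\F_{q^m}$-linear, so additivity alone cannot absorb the scalars $\lambda_i \in \F_{q^m}$ appearing in a generic $\F_{q^m}$-linear combination of the generators. This is precisely where Proposition~\ref{prop:prel}(1) does the essential work, reducing each scalar multiple back to the support of the corresponding generator. Once that reduction is made, the argument is a direct combination of the subadditivity from the first part with the invariance under nonzero scalars.
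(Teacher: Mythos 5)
Your proof is correct. The paper itself states this proposition without proof, treating it as a "simple result," and your argument — additivity of the map $v \mapsto \Gamma(v)$, the column-space inclusion $\colsp(A+B) \subseteq \colsp(A)+\colsp(B)$, and the scalar invariance $\sigma^\rk(\lambda_i c_i)=\sigma^\rk(c_i)$ from Proposition~\ref{prop:prel}(1) to handle the $\F_{q^m}$-scalars that mere $\F_q$-linearity of $\Gamma$ cannot absorb — is precisely the standard argument the authors leave implicit.
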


Recall that a (\textbf{linear}, \textbf{rank-metric}) \textbf{isometry} of~$\Fm^n$ is an $\Fm$-linear automorphism~$\varphi$ of~$\Fm^n$ that preserves the rank weight, i.e., such that $\rk(v)=\rk(\varphi(v))$ for all $v\in\Fm^n$. It is known that the isometry group of~$\Fm^n$, say $\mathcal{G}(q,m,n)$, is generated by the (nonzero) scalar multiplications of $\Fm$ and the linear group $\GL_n(q)$; see e.g. \cite{berger2003isometries}. More precisely, $\mathcal G(q,m,n) \cong \Fm^* \times \GL_n(q)$, which (right-)acts on $\Fm^n$ via
$$\begin{array}{rcl}
    (\Fm^* \times \GL_n(q)) \times \Fm^n & \longrightarrow & \Fm^n \\
    ((\alpha , A), v) &  \longmapsto &  \alpha v A.
\end{array}$$

\begin{definition}\label{del:equiv}
Rank-metric codes $\mC,\mC^\prime \subseteq \F_{q^m}^n$ are (\textbf{linearly}) \textbf{equivalent} if there exists $\varphi\in \mathcal{G}(q,m,n)$ such that $\mC^\prime=\varphi(\mC).$
\end{definition}

Observe that, by $\Fm$-linearity, when studying linear equivalence of $\Fmk$ codes
the action of $\Fm^*$ is trivial.
In particular, $\Fmk$ codes $\mC$ and $\mC^\prime$ are equivalent if and only if there exists $A \in \GL_n(q)$ such that $$\mC^\prime=\mC \cdot A:=\left\{ vA \st v\in \mC\right\}.$$

We conclude this section with the definition of dual code, which we will use often throughout the paper.

\begin{definition}
 The \textbf{dual} of a rank-metric code $\mC \subseteq \F_{q^m}^n$ is the rank-metric code
 $$\mC^\perp=\{v \in \F_{q^m}^n \st u \cdot v^\top  =0 \mbox{ for all $u \in \mC$}\} \subseteq \F_{q^m}^n.$$
 \end{definition}
Recall moreover that $\dim_{\F_{q^m}}(\mC)+\dim_{\F_{q^m}}(\mC^\perp)=n$ for all rank-metric codes $\mC \subseteq \F_{q^m}^n$.
There are several relations between a code and its dual, the most elegant of which are probably the MacWilliams(-type) identities. These were established by Delsarte in~\cite{delsarte1978bilinear} for $\F_q$-linear rank-metric codes endowed with the trace product. A simpler proof and their connection with the theory of $\F_{q^m}$-linear rank-metric codes considered here can be found in~\cite{ravagnani2016rank}. 

\subsection{Hamming-Metric Codes}

In this paper, we will often consider codes endowed with the Hamming metric
%~\cite{lint1992introduction, huffman2010fundamentals}
and compare their behaviour with that of rank-metric codes with respect to several properties. We therefore briefly recall some classical notions from coding theory. For more details the reader is referred to \cite{huffman2010fundamentals,tsfasman1991algebraic}.

\begin{definition}
 The (\textbf{Hamming}) \textbf{support} of a vector 
$v \in \F_q^n$ is  $\sH(v)=\{i \st v_i \neq 0\} \subseteq \{1,\dots,n\}$ and its \textbf{Hamming weight} is $\wH(v)=|\sH(v)|$.

An $[n,k]_q$ \textbf{Hamming-metric code} $\mC$ is an $\F_q$-linear subspace $\mC \subseteq \F_q^n$ of dimension $k$. The \textbf{minimum distance} of
$\mC$ is the integer $\dH(\mC)=\min\{\wH(c) \st c \in \mC, \, c \neq 0\}$. If $d=\dH(\mC)$ is known, we say that $\mC$ is 
an $[n,k,d]_q$ code. A \textbf{generator matrix} of $\mC$ is a matrix  $G\in\F_q^{k\times n}$  whose rows generate $\mC$ as an $\F_q$-linear space. 
Finally, we say that $\mathcal{C}$ and $\mathcal{C}^\prime$ are (\textbf{monomially})  \textbf{equivalent} if there exists an $\F_q$-linear isometry $f: \F_q^n \to \F_q^n$ with $f(\mC)=\mC'$.
%; see \cite[pag. 24]{huffman2010fundamentals}
\end{definition}

%We denote by $\dH(\mC)$ the minimum Hamming distance of a code $\mC$ and by $\wH(v)$ the Hamming weight of a vector.

Recall that the \textbf{Hamming support} $\sH(\mC)$ of a code 
$\mC$ is the union of the supports of its codewords.
The code $\mC$ is called \textbf{Hamming-nondegenerate}
if $\sH(\mC)=\{1,\dots,n\}$ and \textbf{Hamming-degenerate} otherwise.
%.  We say that~$\mC$ is \textbf{Hamming-degenerate} if it is not nondegenerate.

There is a well-known geometric interpretation of codes endowed with the Hamming metric. To describe it, we recall the following setting. The projective geometry $\PG(k-1,q)$ with underlying vector space $\F_q^k$ is defined as 
$$ \PG(k-1,q):= \left(\F_q^{k}\setminus \{0\}\right)/_\sim, $$
where $\sim$ denotes the proportionality relation, i.e.,
$u\sim v$ if and only if $u=\lambda v$
for some nonzero element~$\lambda \in \F_q$. 

\begin{definition}\label{def:projsystem}
 A \textbf{projective} $[n,k,d]_q$ \textbf{system} $(\mathcal{P},\mm)$ is a finite multiset, where $\mP\subseteq \PG(k-1,q)$ is a set of points that do not all lie on a hyperplane, and $\mm:\PG(k-1,q)\rightarrow \mathbb N$ is the multiplicity function, with $\mm(P)>0$ if and only if $P\in\mP$ and $\sum_{P\in\mathcal P}\mm(P)=n$.
 The parameter~$d$ is defined as 
 $$d = n- \max\bigg\{\sum_{P\in H}\mm(P) \st  H\subseteq \PG(k-1,q), \; \dim(H) = k-2\bigg\}.$$ 
 Projective $[n,k,d]_q$ systems $(\mathcal{P},\mm)$ and $(\mathcal{P}^\prime,\mm^\prime)$ are \textbf{equivalent} if there exists a projective isomorphism~$\phi \in \PGL(k,q)$ mapping $\mathcal{P}$ to $\mathcal{P}^\prime$ that preserves the multiplicities of the points, i.e., such that $\mm(P)=\mm^\prime(\phi(P))$ for every $P\in\PG(k-1,q)$.
\end{definition}

There exists a 1-to-1 correspondence between (monomial) equivalence classes of $[n,k,d]_{q}$ Hamming-nondegenerate codes and equivalence classes of projective $[n,k,d]_q$  systems; see e.g. \cite[Theorem 1.1.6]{tsfasman1991algebraic}. The correspondence can be formalized by two maps
$$ \begin{array}{rcl} \Phi^\HH: C[n,k,d]_q & \longrightarrow & \mP[n,k,d]_{q}, \\
%  \left[(u_1 \mid \cdots \mid u_n)\right] & \longmapsto  & \{\{ [u_1],\ldots,[u_n]\}\} \\
\Psi^\HH: \mP[n,k,d]_{q} & \longrightarrow & C[n,k,d]_{q},
\end{array}$$
which are the inverse of each other. For a given equivalence class $[C]$ of nondegenerate $[n,k,d]_{q}$ codes, choose a generator matrix $G\in \F_q^{k \times n}$ of one of the codes in $[C]$. Let $g_1,\ldots,g_n$ be the columns of $G$ and take the set $\mP=\{[g_1],\ldots,[g_n]\}\subseteq \PG(k-1,q)$. Moreover, define the multiplicity function $\mm$ as
$$\mm(P)= |\{ i \st P=[g_i]\}|.$$
Then, the map $\Phi^\HH$ is defined to be $\Phi^\HH([C])=[(\mP,\mm)]$.
On the other hand, for a given equivalence class $[(\mP,\mm)]$ of projective $[n,k,d]_{q}$ systems, we construct a matrix $G$ by taking as columns representatives of the points $P_i$'s in $\mP$, each counted with multiplicity $\mm(P_i)$. We then set $\Psi^\HH([\mP,\mm])=[\rs(G)]$.

\begin{definition}
 Let $\mC$ be an $[n,k]_q$ code. A codeword $c\in\mC$ is \textbf{Hamming-minimal} if every nonzero codeword $c^\prime$ with $\sH(c^\prime)\subseteq\sH(c)$ is a multiple of $c$. A code is \textbf{Hamming-minimal} if all its codewords are Hamming-minimal. 
\end{definition}

Minimal codes in the Hamming metric have been extensively studied not only for their applications to secret sharing schemes \cite{Massey}, but also for their geometric and combinatorial properties. 
We recall the following definition.

\begin{definition}
A $t$-\textbf{fold} \textbf{blocking set} in~$\PG(k-1,q)$ is a set $\mP\subseteq \PG(k-1,q)$ such that for every hyperplane $H$ of $\PG(k-1,q)$ we have $|H \cap \mP|\geq t$. When $t=1$ we call $\mP$ a \textbf{blocking set}. A blocking set $\mathcal{P}$ is called \textbf{cutting} if for every hyperplane $H$ of $\PG(k-1,q)$ we have $\langle \mathcal{P} \cap H\rangle = H$. 
\end{definition}

Cutting blocking sets have been introduced in connection to minimal codes in \cite{bonini2020minimal}. However, the same objects were already known under different names and analyzed under different point of view. In \cite{1930-5346_2011_1_119}, they were called  \emph{strong blocking sets} and used for constructing saturating sets in projective spaces over finite fields. Moreover, they were also known as \emph{generator sets} and constructed as union of disjoint lines in \cite{fancsali2014lines}.

The following result relates cutting blocking sets and Hamming-minimal codes and can be found in \cite{alfarano2019geometric,tang}. 

\begin{theorem}\label{thm:cuttingHammingminimal} 
The maps $\Psi^\HH$ and $\Phi^\HH$ define a 1-to-1  correspondence between equivalence classes of cutting blocking sets and equivalence classes of nondegenerate Hamming-minimal codes. 
\end{theorem}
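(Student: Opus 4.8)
The plan is to prove the equivalence of minimality and the cutting property directly through the established correspondence $\Phi^\HH, \Psi^\HH$, using the geometric characterization of codeword supports via hyperplanes. First I would fix a nondegenerate $[n,k,d]_q$ code $\mC$ with generator matrix $G$, whose columns $g_1,\dots,g_n$ determine the associated projective system $(\mP,\mm)$. The key dictionary entry I need is this: for a nonzero codeword $c = xG$ with $x \in \F_q^k \setminus \{0\}$, the hyperplane $H_x = \{P \in \PG(k-1,q) \st \langle x, P\rangle = 0\}$ (where I view $x$ as defining a linear functional) satisfies $i \notin \sH(c)$ if and only if $[g_i] \in H_x$. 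In other words, the zero-coordinates of a codeword correspond exactly to the points of $\mP$ lying on the associated hyperplane, and conversely every hyperplane $H$ arises from some codeword. This is the standard projective-geometric translation underlying the whole correspondence, and I would state it explicitly as the backbone of the argument.

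Once this dictionary is in place, the proof splits into translating both conditions. On the coding side, $\mC$ is Hamming-minimal means: no nonzero codeword $c'$ has $\sH(c') \subsetneq \sH(c)$ for a codeword $c$ that is not a scalar multiple of $c'$. Taking complements of supports, $\sH(c') \subseteq \sH(c)$ is equivalent to the set of zero-positions of $c$ being contained in the set of zero-positions of $c'$; via the dictionary this says $\mP \cap H_x \subseteq \mP \cap H_{x'}$ where $c = xG$, $c' = x'G$. I would argue that the minimality failure — a genuinely smaller support — corresponds precisely to a proper containment of point sets $\mP \cap H_x \subsetneq \mP \cap H_{x'}$ with $H_x \neq H_{x'}$. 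On the geometry side, the cutting condition $\langle \mP \cap H\rangle = H$ for every hyperplane $H$ must be shown equivalent to the absence of such a containment. The cleanest route is the contrapositive: $\mP$ fails to be cutting at $H_x$ exactly when $\langle \mP \cap H_x\rangle$ is a proper subspace of $H_x$, hence contained in some other hyperplane $H_{x'} \neq H_x$, which forces $\mP \cap H_x \subseteq H_{x'}$, i.e. $\mP \cap H_x \subseteq \mP \cap H_{x'}$; this is exactly a witness to non-minimality. I would verify both directions of this equivalence carefully.

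The main obstacle I anticipate is handling the bookkeeping of multiplicities and the scalar-multiple clause in the definition of minimality. The multiset structure $(\mP,\mm)$ means a single projective point can correspond to several coordinates, so I must be sure that the support-containment statements are read at the level of coordinate positions rather than merely projective points; positions sharing a common point all vanish or all survive together under a fixed codeword, so this is consistent, but it requires care to state precisely. The other subtle point is ensuring that the proper containment $\mP \cap H_x \subsetneq \mP \cap H_{x'}$ genuinely yields a codeword $c'$ that is \emph{not} a scalar multiple of $c$: this is where $H_x \neq H_{x'}$ (equivalently $x, x'$ not proportional) is essential, and where nondegeneracy of $\mC$ guarantees that distinct hyperplanes are distinguished by $\mP$. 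I would isolate these two checks as the crux of the argument and treat the remaining implications as routine consequences of the dictionary. Finally, I would note that since $\Phi^\HH$ and $\Psi^\HH$ are mutually inverse bijections on equivalence classes by the cited correspondence, establishing the equivalence of the two defining properties immediately upgrades to the claimed 1-to-1 correspondence between the respective equivalence classes.
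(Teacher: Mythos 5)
The paper never proves Theorem~\ref{thm:cuttingHammingminimal}: it is stated as a known result and attributed to the cited references, so there is no internal proof to compare yours against. Your argument is, in substance, the standard proof from that literature, and it is correct. The backbone dictionary $i \notin \sH(xG) \Leftrightarrow [g_i] \in H_x$ is right, the translation of $\sH(x'G) \subseteq \sH(xG)$ into $\mP \cap H_x \subseteq \mP \cap H_{x'}$ is right (and your remark on multiplicities is exactly the needed one: whether a coordinate vanishes depends only on the underlying projective point, so the multiset structure is irrelevant to both conditions), and the geometric step --- a section $\mP \cap H_x$ whose span is proper in $H_x$ lies in a second hyperplane $H_{x'} \neq H_x$, and conversely containment in two distinct hyperplanes forces the span to be proper --- closes both directions. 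Combined with the fact that $\Phi^\HH$ and $\Psi^\HH$ are mutually inverse and that both properties are invariant under the respective equivalences, this yields the claimed restriction of the bijection.

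Two corrections of wording, neither of which damages the proof. First, a failure of minimality corresponds to a containment $\mP \cap H_x \subseteq \mP \cap H_{x'}$ with $H_x \neq H_{x'}$, \emph{not} necessarily a proper containment of point sets: the two sections may coincide (equal supports, non-proportional codewords), and that is still a violation of minimality; your contrapositive argument only ever uses the non-strict containment, so this is a slip of phrasing rather than of logic. Second, the fact that non-proportional $x, x'$ give non-proportional codewords $xG, x'G$ follows from $G$ having full row rank $k$ (the map $x \mapsto xG$ is an isomorphism onto $\mC$), not from nondegeneracy; your parenthetical claim that ``nondegeneracy of $\mC$ guarantees that distinct hyperplanes are distinguished by $\mP$'' is in fact false in general --- two distinct hyperplanes can cut out the same subset of $\mP$, which is precisely the situation of a non-minimal code with two non-proportional codewords of equal support --- but fortunately that claim is never needed anywhere in your argument. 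Nondegeneracy enters only to ensure no column of $G$ is zero, so that the projective system and the maps $\Phi^\HH$, $\Psi^\HH$ are defined in the first place.
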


%%%%%%%%%%%%%%%%%%%%%%%

\section{The Geometry of Rank-Metric Codes}
\label{sec:3}

In this section we study the geometric structure of rank-metric codes and their connection with the theory of $q$-systems, introducing fundamental tools that will be needed later. We also describe one-weight and simplex codes in the rank metric.

Although most of the results contained in this section have already appeared in \cite{sheekey2019scatterd} and \cite{randrianarisoa2020geometric}, we are not aware of any organic survey of the topic, which we offer here. For convenience of the reader, we include concise proofs and state results in the form that will be needed in later sections of the paper.

\subsection{Geometric Characterization of Rank-Metric Codes}\label{sec:geo-rk}
We start by introducing the natural analogue of the notion of ``nondegenerate'' code in the rank-metric setting.

\begin{definition}\label{def:nondegenerate}
 An $\Fmk$ rank-metric code $\mC$ is (\textbf{rank-})\textbf{nondegenerate} if $\sigma^\rk(\mC)=\F_q^n$.
 We say that $\mC$ is (\textbf{rank-})\textbf{degenerate} if it is not nondegenerate. Moreover, we call $\dim(\sigma^\rk(\mC))$ 
 the \textbf{effective length} of the code $\mC$.
\end{definition}

\begin{proposition}\label{prop:newnondegeneracy}
Let $\mC\subseteq\Fm^n$ be a rank-metric code. The following are equivalent.
\begin{enumerate}
    \item $\mC$ is rank-nondegenerate.
    \item For every $A \in \GL_n(q)$, the code $\mC \cdot A$ is Hamming-nondegenerate.
    \item The $\F_q$-span of the columns of any generator matrix of $G$ has $\Fq$-dimension $n$.
    \item $\drk(\mC^\perp) \ge 2$.
\end{enumerate}
\end{proposition}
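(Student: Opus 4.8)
The plan is to prove the chain of equivalences $(1)\Leftrightarrow(3)\Leftrightarrow(2)$ and $(1)\Leftrightarrow(4)$, using the fundamental fact from Proposition~\ref{prop:supp_propr} that the rank support of $\mC$ is the sum of the supports of a generating set of codewords.

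First I would establish $(1)\Leftrightarrow(3)$, which I expect to be the geometric heart of the matter. Let $G$ be a generator matrix with rows $c_1,\dots,c_k$, so that $\mC=\langle c_1,\dots,c_k\rangle_{\Fm}$ and hence $\sigma^\rk(\mC)=\sigma^\rk(c_1)+\cdots+\sigma^\rk(c_k)$ by Proposition~\ref{prop:supp_propr}. Fixing a basis $\Gamma$, the matrix $\Gamma(c_i)\in\F_q^{n\times m}$ has column space equal to $\sigma^\rk(c_i)$, and these columns are exactly the $\F_q$-linear combinations of the coordinate vectors obtained by expanding the $i$-th row of $G$ over $\Gamma$. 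The key observation is that the $\F_q$-span of all columns of $G$ (viewed as vectors in $\Fm^n$ but then expanded over $\Fq$, equivalently the $\F_q$-span of the columns of the stacked matrices $\Gamma(c_i)$) coincides with $\sigma^\rk(c_1)+\cdots+\sigma^\rk(c_k)=\sigma^\rk(\mC)$. Thus $\sigma^\rk(\mC)=\F_q^n$ if and only if these columns span all of $\F_q^n$, which is precisely condition $(3)$. I would take care to phrase condition $(3)$ correctly: the ``columns of $G$'' generate a subspace of $\Fm^n$, and one takes its image under restriction of scalars (the $\Fq$-span), so the $\Fq$-dimension being $n$ is the right formulation.

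Next I would treat $(1)\Leftrightarrow(2)$. The rank support is invariant under the right action of $\GL_n(q)$ up to the natural transformation: by Proposition~\ref{prop:prel}(3), $\Gamma(vA)=A^\top\Gamma(v)$, so $\sigma^\rk(vA)=A^\top\sigma^\rk(v)$ and therefore $\sigma^\rk(\mC\cdot A)=A^\top\,\sigma^\rk(\mC)$. Since $A^\top\in\GL_n(q)$ is invertible, $\sigma^\rk(\mC)=\F_q^n$ if and only if $\sigma^\rk(\mC\cdot A)=\F_q^n$ for every (equivalently, for some) $A$. It then remains to connect rank-nondegeneracy of $\mC\cdot A$ with its Hamming-nondegeneracy: a code is Hamming-degenerate exactly when some coordinate is identically zero across all codewords, and this happens precisely when the corresponding column of every generator matrix of $\mC\cdot A$ is zero, which forces a proper containment $\sigma^\rk(\mC\cdot A)\subsetneq\F_q^n$ in that coordinate direction. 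I would argue that rank-nondegeneracy for all $A$ rules out any such degenerate coordinate; the subtle point here, and where I would be most careful, is that Hamming-nondegeneracy of $\mC\cdot A$ for a single $A$ is weaker than rank-nondegeneracy, so one genuinely needs the quantifier over all $A\in\GL_n(q)$ to recover rank-nondegeneracy.

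Finally, $(1)\Leftrightarrow(4)$ is the duality statement. A code $\mC^\perp$ has minimum rank distance at least $2$ if and only if it contains no nonzero codeword of rank weight $1$. A rank-one vector, by definition of rank support, is one whose support is a one-dimensional $\Fq$-subspace of $\F_q^n$; concretely its $\Gamma$-expansion has rank-one column space, meaning it is of the form $\alpha\,x$ for $\alpha\in\Fm^*$ and $x\in\F_q^n$ nonzero (a scalar multiple of a vector over the base field). I would show that such a vector $\alpha x$ lies in $\mC^\perp$ if and only if $x$ is $\Fq$-orthogonal to $\sigma^\rk(\mC)$, so $\mC^\perp$ contains a rank-one codeword precisely when $\sigma^\rk(\mC)^\perp\ne 0$, i.e. $\sigma^\rk(\mC)\neq\F_q^n$. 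This gives $\drk(\mC^\perp)\geq 2\iff\sigma^\rk(\mC)=\F_q^n$. The main obstacle across the whole proof is keeping the restriction-of-scalars bookkeeping straight — repeatedly passing between the $\Fm$-structure of codewords and the $\Fq$-structure of their supports via the chosen basis $\Gamma$ — and verifying that the characterization of weight-one vectors as $\Fm$-multiples of base-field vectors is correct and interacts cleanly with the standard dot product defining $\mC^\perp$.
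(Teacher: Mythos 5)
Your overall architecture is sound, and two of your three blocks are essentially correct, but the step you yourself call the ``key observation'' in $(1)\Leftrightarrow(3)$ is false as stated, and this is the genuine gap. The $\F_q$-span of the columns of $G$ lives in $\Fm^k$ (not $\Fm^n$ as you write), and after expansion over $\Gamma$ it sits inside $\F_q^{km}$; meanwhile $\sigma^\rk(c_1)+\cdots+\sigma^\rk(c_k)=\sigma^\rk(\mC)$ sits inside $\F_q^n$. These two spaces cannot ``coincide'': they are, respectively, the column space and the row space of the same $km\times n$ matrix over $\F_q$ (the full $\Gamma$-expansion of $G$, with rows indexed by pairs consisting of a row of $G$ and an element of $\Gamma$). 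What is true, and what you actually need, is that they have the \emph{same dimension}, by equality of row rank and column rank; then $\dim_{\F_q}\sigma^\rk(\mC)=n$ if and only if the columns of $G$ are $\F_q$-independent, which is $(3)$. Alternatively -- and this is the cleaner repair, since you already develop the tool in your treatment of $(4)$ -- note that for $x\in\F_q^n$ one has $x\perp\sigma^\rk(v)$ if and only if $v\cdot x^\top=0$; hence $\sigma^\rk(\mC)^\perp=\{x\in\F_q^n \st Gx^\top=0\}$ is exactly the space of $\F_q$-linear relations among the columns of $G$, and taking dimensions gives $(1)\Leftrightarrow(3)$ at once.

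A second, smaller gap: for $(2)\Rightarrow(1)$ you correctly flag that the quantifier over all $A$ is essential, but you never supply the argument. It is the contrapositive: if $\sigma^\rk(\mC)$ lies in a hyperplane $V\subsetneq\F_q^n$, pick $A\in\GL_n(q)$ with $A^\top V\subseteq\langle e_1,\dots,e_{n-1}\rangle$; then $\sigma^\rk(\mC\cdot A)=A^\top\sigma^\rk(\mC)\subseteq\langle e_1,\dots,e_{n-1}\rangle$ forces the last coordinate of every codeword of $\mC\cdot A$ to vanish, so $\mC\cdot A$ is Hamming-degenerate. With these two repairs your proof is complete, and it is genuinely different in structure from the paper's: the paper proves the cycle $(1)\Rightarrow(2)\Rightarrow(4)\Rightarrow(1)$ together with $(2)\Rightarrow(3)\Rightarrow(1)$, all by coordinate/contrapositive arguments through the $\GL_n(q)$-action, whereas your direct duality argument for $(1)\Leftrightarrow(4)$, via the classification of rank-one vectors as $\alpha x$ with $\alpha\in\Fm^*$ and $x\in\F_q^n$ nonzero, is correct and arguably more transparent than the paper's detour through Hamming-degeneracy.
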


 \begin{proof}
\underline{$(1) \Rightarrow (2)$}: Assume that $\mC \cdot A$ is Hamming-degenerate for some $A \in \GL_n(q)$. Then there exists $1 \le i \le n$ with
$(vA)_i=0$ for all $v \in \mC$. In particular, $\sigma^\rk(vA) \subseteq V:= \langle e_j \st j \neq i\rangle$.
Using Proposition~\ref{prop:prel}, we see that $\sigma^\rk(\mC)$ is contained in an $(n-1)$-dimensional subspace of~$\F_q^n$, hence~$\C$ is rank-degenerate.

\underline{$(2) \Rightarrow (4)$}:  Let $\Gamma:=\{\gamma_1,\dots, \gamma_m\}$ be an $\Fm/\Fq$ basis. If $d(\mC^\perp)=1$, then there exists $v\in\mC^\perp$ with $\rk(\Gamma(v))=1$. Therefore there exists 
$A\in\GL_n(q)$ with $v=(0,\dots, 0,1)\in(\mC\cdot A)^\perp$.
Thus~$\mC\cdot A$ is Hamming-degenerate code.

\underline{$(4) \Rightarrow (1)$}: A
rank-degenerate code $\mC$ is equivalent to a code~$\mC \cdot A$ in which 
all codewords have a~$0$ in the last component. Hence $(0,\dots, 0,1)\in (\mC\cdot A)^\perp$ and $d(\mC^\perp)=1$.

\underline{$(2) \Rightarrow (3)$}: Let $G$ be a generator matrix of $\C$. Since 
$\mC\cdot A$ is Hamming-nondegenerate for
any $A\in\GL_n(q)$, the columns of $G$ are linearly independent over $\Fq$. This implies that $n=\dim(\sigma^\rk(\mC))$ is equal to the dimension of the $\Fq$-space of the columns of $G$.

\underline{$(3) \Rightarrow (1)$}: This immediately follows from the definition of rank-nondegenerate code. \qedhere
\end{proof}

\begin{remark}\label{rem:effectivelength}
By Proposition \ref{prop:newnondegeneracy}, a degenerate code can be isometrically embedded in~$\Fm^{n^\prime}$, where $n^\prime =\dim(\sigma^\rk(\mC))$.
\end{remark}

The following result shows that the parameters of a nondegenerate code must obey certain constraints.

\begin{proposition}(see \cite[Corollary 6.5]{jurrius2017defining})\label{prop:nleqkm}
 Let $\mC$ be an $\Fmk$ nondegenerate rank-metric code. Then $n\leq km$.
\end{proposition}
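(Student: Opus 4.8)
The plan is to prove the bound $n \le km$ by exhibiting an injective $\F_q$-linear map from the column space of a generator matrix into a space of dimension at most $km$. Let $G \in \F_{q^m}^{k \times n}$ be a generator matrix of the nondegenerate code $\mC$, and write $g_1, \dots, g_n \in \F_{q^m}^k$ for its columns. By Proposition~\ref{prop:newnondegeneracy}, nondegeneracy of $\mC$ is equivalent to the statement that the $\F_q$-span of these columns has $\F_q$-dimension exactly $n$; that is, $g_1, \dots, g_n$ are linearly independent over $\F_q$ inside the $\F_{q^m}$-vector space $\F_{q^m}^k$.

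The key observation is then a simple dimension count. First I would note that $\F_{q^m}^k$, when viewed as a vector space over the subfield $\F_q$ by restriction of scalars, has $\F_q$-dimension equal to $km$: indeed, each of the $k$ coordinates lies in $\F_{q^m}$, which is an $m$-dimensional $\F_q$-vector space, so the total $\F_q$-dimension is $k \cdot m$. Since the columns $g_1, \dots, g_n$ are $\F_q$-linearly independent vectors living inside this $km$-dimensional $\F_q$-space, their number cannot exceed the dimension, giving $n \le km$ immediately.

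The whole argument therefore reduces to two steps: translating nondegeneracy into $\F_q$-linear independence of the columns (which is exactly the content of the equivalence $(1) \Leftrightarrow (3)$ in Proposition~\ref{prop:newnondegeneracy}, already available to us), and then applying the elementary fact that the number of $\F_q$-independent vectors in an $\F_q$-space is bounded by its dimension. I do not anticipate a genuine obstacle here; the only point requiring a moment of care is to be explicit that the relevant ambient space is $\F_{q^m}^k$ regarded as an $\F_q$-space of dimension $km$, rather than as an $\F_{q^m}$-space of dimension $k$. The subtlety, such as it is, lies entirely in keeping track of which field one is taking dimensions over, and once that bookkeeping is made explicit the bound follows in one line.
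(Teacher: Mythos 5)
Your proof is correct, but it runs on the opposite side of the matrix from the paper's own argument. The paper works in the support space $\F_q^n$: it takes $k$ generators $c_1,\dots,c_k$ of $\mC$, uses Proposition~\ref{prop:supp_propr} to write $\sigma^\rk(\mC)=\sigma^\rk(c_1)+\cdots+\sigma^\rk(c_k)$, notes that each summand has $\F_q$-dimension at most $m$, and concludes from $\sigma^\rk(\mC)=\F_q^n$ that $n\le km$. You instead work in $\F_{q^m}^k$ viewed as an $\F_q$-space of dimension $km$, and count the $n$ columns of a generator matrix, which are $\F_q$-independent by the equivalence $(1)\Leftrightarrow(3)$ of Proposition~\ref{prop:newnondegeneracy}. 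The two counts are transposes of each other: expanding $G$ entrywise over an $\F_q$-basis of $\F_{q^m}$ yields a single $\F_q$-matrix, and the paper says its rank is $n$ because its column space (inside $\F_q^n$) is everything, while you say its rank is $n$ because its $n$ columns (inside $\F_q^{km}$) are independent. The trade-off: the paper's route needs only Proposition~\ref{prop:supp_propr} and the bare definition of nondegeneracy, whereas your route invokes Proposition~\ref{prop:newnondegeneracy}, a heavier prerequisite (its proof passes through Hamming-nondegeneracy and the dual code); on the other hand, your formulation is the one that matches the $q$-system picture of Section~\ref{sec:3}, where the bound reads simply as $\dim_{\F_q}\mU\le\dim_{\F_q}\F_{q^m}^k=km$, so it generalizes more transparently to the geometric framework used in the rest of the paper.
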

\begin{proof}
Let $\{c_1,\dots,c_k\}$ be a set of generators for $\mC$. Then, by Proposition \ref{prop:supp_propr}, $\sigma^\rk(\mC)$ is generated by $\sigma^\rk(c_i)$ for $i=1,\dots,k$. Since $\dim(\sigma^\rk(c_i)) \leq m$ for all $i$ and $\sigma^\rk(\mC)=\Fq^n$,
we conclude that $n\leq km$.
\end{proof}

Our next move is to identify geometric objects able to capture the structure of rank-metric codes. We re-formulate the definition of  $q$-analogue of a projective system proposed in \cite{randrianarisoa2020geometric} as follows.

\begin{definition}
 An \textbf{$[n,k,d]_{q^m/q}$  system} is an $n$-dimensional $\Fq$-space $\mU\subseteq \Fm^k$ with the properties that $\langle \mU\rangle_{\Fm}=\Fm^k$ and 
 \begin{equation} \label{d=}
 d = n- \max \left\{\dim_{\Fq}(\mU\cap H) \st H \mbox{ is an } \Fm\mbox{-hyperplane of }  \Fm^k \right\}.
 \end{equation}
 Note that \eqref{d=} can be re-written as 
\begin{equation*}
    \min \left\{\dim_{\Fq}(\mU + H) \st H \mbox{ is an } \Fm\mbox{-hyperplane in }  \Fm^k  \right\} -m(k-1).
\end{equation*}
When the parameters are not relevant, we simply call such an object a \textbf{$q$-system}.
\end{definition}

Two $\Fmk$ systems $\mU, \mV$ are said to be \textbf{equivalent} if there exists an
$\F_{q^m}$-isomorphism $\phi: \F_{q^m}^k \to \F_{q^m}^k$  such that $\phi(\mU)=\mV$. 

\bigskip
The following simple result is a geometric formulation of one of the \emph{Standard Equations}
(stated in our context), which will be of great help throughout the paper. Recall that for integers $a \ge b \ge 0$ and a prime power $Q$, the symbol
$$\binom{a}{b}_{Q}$$
denotes the number of $b$-dimensional subspaces of an $a$-dimensional space over $\F_Q$. This quantity is called a \textbf{Gaussian binomial coefficient}.

\begin{lemma}(The Standard Equations)\label{lem:st-eq} Let $\mU$ be an $\Fmk$ system and let $\Lambda_{r}$ be the set of all $r$-dimensional $\F_{q^m}$-subspaces of $\F_{q^m}^k$. We have
 \begin{equation}\label{eq:st-eq}
      \sum_{H\in\Lambda_r} |H\cap (\mU\setminus\{0\})|=(q^n-1)\binom{k-1}{r-1}_{q^m}.
 \end{equation}
\end{lemma}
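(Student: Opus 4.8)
The plan is to establish \eqref{eq:st-eq} by a double-counting argument applied to the incidence set of pairs consisting of a nonzero vector of $\mU$ and an $r$-dimensional $\Fm$-subspace passing through it. Concretely, I would introduce
$$\mathcal{I} = \{(u,H) \st u \in \mU\setminus\{0\},\ H \in \Lambda_r,\ u \in H\}$$
and compute $|\mathcal{I}|$ in two different ways, once by fixing the subspace $H$ and once by fixing the vector $u$.

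Counting $\mathcal{I}$ by first fixing $H \in \Lambda_r$ and then ranging over the admissible $u$ gives, for each $H$, exactly $|H \cap (\mU\setminus\{0\})|$ pairs; summing over all $H \in \Lambda_r$ therefore recovers precisely the left-hand side of \eqref{eq:st-eq}. Counting $\mathcal{I}$ the other way, I would fix $u \in \mU\setminus\{0\}$ and count the $r$-dimensional $\Fm$-subspaces $H$ with $u \in H$. Since each such $H$ is an $\Fm$-subspace, the condition $u \in H$ is equivalent to $\langle u\rangle_{\Fm} \subseteq H$, so this count depends only on the $\Fm$-line spanned by $u$.

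The one nonroutine ingredient is the evaluation of that inner count: the number of $r$-dimensional $\Fm$-subspaces of $\Fm^k$ containing a fixed $1$-dimensional $\Fm$-subspace. This is obtained by passing to the quotient $\Fm^k/\langle u\rangle_{\Fm} \cong \Fm^{k-1}$, under which the $r$-dimensional subspaces containing $\langle u\rangle_{\Fm}$ correspond bijectively to the $(r-1)$-dimensional subspaces of $\Fm^{k-1}$; by definition of the Gaussian binomial coefficient this quantity equals $\binom{k-1}{r-1}_{q^m}$, and crucially it is the same value for every nonzero $u$. Since $\mU$ is an $n$-dimensional $\Fq$-space we have $|\mU\setminus\{0\}| = q^n - 1$, so summing the constant $\binom{k-1}{r-1}_{q^m}$ over all $q^n-1$ nonzero $u$ yields $(q^n-1)\binom{k-1}{r-1}_{q^m}$. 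Equating the two counts of $|\mathcal{I}|$ gives \eqref{eq:st-eq}.

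I do not expect a genuine obstacle here, as the argument is a clean instance of Fubini-type double counting; the only point requiring a word of justification is the subspace-through-a-line count, and even there the quotient argument makes it immediate. It is worth emphasizing in the write-up that the double count is carried out over \emph{vectors} $u$ rather than over $\Fm$-lines, so that no correction for repeated lines is needed and the hypothesis $\langle\mU\rangle_{\Fm}=\Fm^k$ plays no role in this particular identity (it only ensures $\mU$ is genuinely a $q$-system).
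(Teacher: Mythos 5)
Your proof is correct and is essentially the paper's own argument: both count the incidence pairs $(u,H)$ with $u \in \mU\setminus\{0\}$, $H \in \Lambda_r$, $u \in H$ in two ways, using the fact that each nonzero vector lies in exactly $\binom{k-1}{r-1}_{q^m}$ subspaces of $\Lambda_r$. The only difference is that you spell out the quotient-space justification for that count, which the paper takes as known.
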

\begin{proof}
Every element in $\mU\setminus\{0\}$ belongs to exactly $\binom{k-1}{r-1}_{q^m}$ $r$-dimensional subspaces in $\Lambda_r$. Therefore, \begin{equation*}
    \sum_{H\in\Lambda_r} |H\cap (\mU\setminus\{0\})| = \sum_{u\in\mU\setminus\{0\}}|\{H\in\Lambda \st u\in H\}|   =(q^n-1)\binom{k-1}{r-1}_{q^m},
\end{equation*}
which is the desired result.
\end{proof}

In the remainder of this section we describe the 1-to-1 correspondence between equivalence classes of nondegenerate
$\Fmkd$ codes and equivalence classes of $[n,k,d]_{q^m/q}$ systems.
We denote the set of equivalence classes of nondegenerate 
$\Fmkd$ codes by $\mC\Fmkd$, and the set of equivalence classes of $[n,k,d]_{q^m/q}$ systems by 
$\mU\Fmkd$. Next, we define a map
$$\Phi : \mC\Fmkd \to \mU\Fmkd$$
as follows: 
Given an equivalence class $[\mC] \in \mC\Fmkd$, let
$\Phi([\mC])$ be the equivalence class of the $\F_q$-span of the columns of a generator matrix of $\mC$. 
Vice versa, given an equivalence class $[\mU] \in \mU\Fmkd$, fix an $\F_q$-basis $\{g_1,\ldots,g_n\}$ of $\mU$ and let 
$\Psi([\mU])$ be the equivalence class of the code generated by the matrix having 
the $g_i$'s as columns.
In Theorem~\ref{thm:1-1} we will show that~$\Phi$ and~$\Psi$ are the inverse of each other.

We recall that the minimum rank distance of a code $\mC$ coincides with the minimum $\F_q$-dimension of the
linear space generated over $\F_q$ by the entries of $v\in\mC$. In particular,
$d^\rk(\mC) \le \dH(\mC)$. More precisely, the rank of a vector can be rewritten as
\begin{equation} \label{obvious}
    \rk(v) = \min \{\wH(vA) \st A \in \GL_n(q)\}.
\end{equation}
We will also repeatedly use the following simple fact: Let $V,H \subseteq W$ be nonzero finite dimensional vector spaces over $\F_q$ and let $\mB$ be the set of 
 $\F_q$-bases of $V$; then
\begin{equation} \label{eq:prel}
    \max\{|B \cap H| \st  B \in \mB\}=\dim(V \cap H).
\end{equation}

Finally, we will often use the following characterization of the rank of a vector.

\begin{lemma}\label{lem:rkvG}
 Let $\mC$ be a nondegenerate $\Fmk$   code and let $G$ be a generator matrix of $\mC$.  For any nonzero $v\in\F_{q^m}^k$ we have
 \begin{equation}\label{eq:rankvG}
     \rk(vG)=n-\dim_{\F_q}(\mU\cap\langle v\rangle ^\perp),
 \end{equation}
 where $\mU$ is the $\Fmk$ system generated by the $\Fq$-span of the columns of $G$.
\end{lemma}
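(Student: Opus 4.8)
The plan is to reduce the computation of $\rk(vG)$ to a single application of rank--nullity for an $\Fq$-linear map attached to $v$. The starting observation is the one recalled just before the lemma: the rank weight of a vector equals the $\Fq$-dimension of the $\Fq$-space spanned by its entries. Concretely, set $c := vG \in \Fm^n$ and write $c_1,\dots,c_n \in \Fm$ for its entries; then $\rk(vG) = \dim_{\Fq}\langle c_1,\dots,c_n\rangle_{\Fq}$. If $g_1,\dots,g_n \in \Fm^k$ denote the columns of $G$ (viewed as vectors), the $i$-th entry of $vG$ is the standard pairing $c_i = v\cdot g_i^\top$, and by definition of the system associated to $\mC$ we have $\mU = \langle g_1,\dots,g_n\rangle_{\Fq}$.

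The key device is the evaluation pairing
\[
\varphi_v \colon \mU \longrightarrow \Fm, \qquad \varphi_v(u) = v\cdot u^\top,
\]
which is $\Fq$-linear, being the restriction to $\mU$ of an $\Fm$-linear functional on $\Fm^k$. Both quantities in the lemma can be read off from $\varphi_v$. Since $\mU$ is $\Fq$-spanned by the $g_i$, its image is $\langle v\cdot g_1^\top,\dots,v\cdot g_n^\top\rangle_{\Fq} = \langle c_1,\dots,c_n\rangle_{\Fq}$, whose $\Fq$-dimension is exactly $\rk(vG)$ by the first paragraph. Its kernel is $\{u\in\mU \st v\cdot u^\top = 0\} = \mU \cap \langle v\rangle^\perp$, since $\langle v\rangle^\perp$ is precisely the $\Fm$-hyperplane of vectors orthogonal to $v$.

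The conclusion then follows by combining rank--nullity for $\varphi_v$ with nondegeneracy. As $\mU$ is an $[n,k]_{q^m/q}$ system it has $\dim_{\Fq}\mU = n$ (this is the content of Proposition~\ref{prop:newnondegeneracy}), so
\[
n = \dim_{\Fq}\ker\varphi_v + \dim_{\Fq}(\textnormal{Im}\,\varphi_v) = \dim_{\Fq}(\mU\cap\langle v\rangle^\perp) + \rk(vG),
\]
which rearranges to~\eqref{eq:rankvG}. I do not anticipate a genuine obstacle: the only points deserving care are the identification in the first paragraph of the rank weight with the span of the entries, and the fact that $\textnormal{Im}\,\varphi_v$ may be computed on the generating set $\{g_i\}$ rather than on all of $\mU$ --- both routine once $\varphi_v$ is in place. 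As a consistency check, since $v\neq 0$ the image lies inside the $m$-dimensional space $\Fm$, so the formula automatically recovers the expected bound $\rk(vG)\le\min\{m,n\}$.
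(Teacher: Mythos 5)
Your proof is correct, but it takes a genuinely different route from the paper's. The paper proves the lemma combinatorially: it invokes the characterization~\eqref{obvious} of the rank as $\min\{\wH(vGA) \st A \in \GL_n(q)\}$, rewrites each Hamming weight as $n$ minus the number of columns of $GA$ lying in $\langle v\rangle^\perp$, observes that the columns of $GA$ range over all $\Fq$-bases of $\mU$ as $A$ varies, and closes with the basis-intersection identity~\eqref{eq:prel}. You instead argue by pure linear algebra: you restrict the $\Fm$-linear functional $u \mapsto v\cdot u^\top$ to $\mU$, identify its image with the $\Fq$-span of the entries of $vG$ (whose dimension is $\rk(vG)$, by the standard fact the paper recalls just before the lemma) and its kernel with $\mU \cap \langle v\rangle^\perp$, and apply rank--nullity together with $\dim_{\Fq}\mU = n$, which is where nondegeneracy enters via Proposition~\ref{prop:newnondegeneracy}. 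Your argument is arguably cleaner and more self-contained: it avoids the optimization over $\GL_n(q)$ and the identity~\eqref{eq:prel} entirely, and it yields the bound $\rk(vG) \le \min\{m,n\}$ as an immediate by-product (a fact the paper later extracts from this lemma with additional counting work in Proposition~\ref{prop:max}). What the paper's approach buys is uniformity of tooling: \eqref{obvious} and \eqref{eq:prel} are reused verbatim in the Appendix proofs of Theorem~\ref{thm:1-1} and Theorem~\ref{thm:genrankweight}, so the same two ingredients serve several results, whereas your functional-theoretic argument would need to be adapted (to subspaces $V \subseteq \mC$ of dimension $r$, or equivalently to $\Fm$-subspaces of codimension $r$ in place of $\langle v\rangle^\perp$) to cover the generalized-weight statements.
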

\begin{proof}
Using~\eqref{obvious} we see that for all nonzero $v \in \Fm^k$ we have
    \begin{align*}
        \rk(vG) = \min\{\wH(vGA) \st A \in \GL_n(q)\} 
        = \min\{n-|\{i \st (GA)_i \in \langle v \rangle^\perp\}|\},
    \end{align*}
    where $(GA)_i$ is the $i$-th column of $GA$ and 
    $\langle v \rangle^\perp$ is the dual of the 1-dimensional code generated by $v$.
    As $A$ ranges over $\GL_n(q)$, the columns of $GA$ range over all bases of $\mU$.
    %the $\F_q$-columnspace of $G$. 
    Therefore we conclude by the identity in~\eqref{eq:prel}.
\end{proof}

The following result has already been shown in \cite{randrianarisoa2020geometric}. We include a complete proof in the Appendix.

\begin{theorem}\label{thm:1-1}
 The maps $\Phi$ and $\Psi$ are well-defined and 
 are the inverse of each other. In particular, they give a 1-to-1 correspondence between equivalence classes of nondegenerate $\Fmkd$ rank-metric codes and equivalence classes of $[n,k,d]_{q^m/q}$ systems.
 \end{theorem}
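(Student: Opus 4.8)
The plan is to establish the 1-to-1 correspondence by verifying three things in order: that $\Phi$ and $\Psi$ are well-defined on equivalence classes, that they preserve the parameter $d$, and that they are mutually inverse. The main conceptual engine is Lemma~\ref{lem:rkvG}, which translates rank weights of codewords into dimensions of intersections of the associated $q$-system with hyperplanes; this is exactly what is needed to match the minimum-distance parameter on both sides.

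\medskip

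First I would verify that $\Phi$ is well-defined. Given a nondegenerate $\Fmk$ code $\mC$, part~(3) of Proposition~\ref{prop:newnondegeneracy} guarantees that the $\F_q$-span $\mU$ of the columns of a generator matrix $G$ has $\F_q$-dimension exactly $n$, and the $\F_{q^m}$-span condition $\langle \mU\rangle_{\F_{q^m}}=\F_{q^m}^k$ holds because $G$ has rank $k$ over $\F_{q^m}$. So $\mU$ is genuinely an $\Fmk$ system. I must then check independence of the choices involved: a different generator matrix of $\mC$ has the form $MG$ for $M\in\GL_k(q^m)$, which applies the $\F_{q^m}$-isomorphism $M^\top$ to the columns and hence gives an equivalent $q$-system; and replacing $\mC$ by an equivalent code $\mC\cdot A$ with $A\in\GL_n(q)$ merely permutes/recombines the columns over $\F_q$, leaving the $\F_q$-span $\mU$ unchanged. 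For $\Psi$, well-definedness is similar but requires noting that a different choice of $\F_q$-basis $\{g_1,\dots,g_n\}$ of $\mU$ yields a generator matrix differing by a right $\GL_n(q)$-action, hence an equivalent rank-metric code, and that applying an $\F_{q^m}$-isomorphism $\phi$ to $\mU$ corresponds to a left $\GL_k(q^m)$-action on the generator matrix, again giving an equivalent code.

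\medskip

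Next I would show the parameter $d$ is preserved. Starting from $\mC$ with generator matrix $G$ and associated system $\mU$, I apply Lemma~\ref{lem:rkvG}: for every nonzero $v\in\F_{q^m}^k$ we have $\rk(vG)=n-\dim_{\F_q}(\mU\cap\langle v\rangle^\perp)$. As $v$ ranges over nonzero vectors, the hyperplanes $\langle v\rangle^\perp$ range over all $\F_{q^m}$-hyperplanes of $\F_{q^m}^k$, so
\begin{equation*}
\drk(\mC)=\min_{v\neq 0}\rk(vG)=n-\max_{H}\dim_{\F_q}(\mU\cap H),
\end{equation*}
which is precisely the defining equation~\eqref{d=} for the $d$-parameter of $\mU$. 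The same identity read in the reverse direction handles $\Psi$. Finally, to see $\Phi$ and $\Psi$ are mutually inverse: starting from $[\mC]$, the system $\Phi([\mC])$ has as its $\F_q$-basis the columns of $G$, and $\Psi$ reassembles exactly those columns into a generator matrix of a code equal to $\mC$, so $\Psi\circ\Phi=\mathrm{id}$; conversely, starting from $[\mU]$ with chosen basis $\{g_i\}$, forming the generator matrix and taking the $\F_q$-span of its columns returns $\mU$, so $\Phi\circ\Psi=\mathrm{id}$.

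\medskip

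The step I expect to be the main obstacle is the bookkeeping in well-definedness, specifically checking that the two group actions on the code side (left $\GL_k(q^m)$ from changing the generator matrix, and right $\GL_n(q)$ from code equivalence) translate cleanly and without overlap into the single $\GL_k(q^m)$-action defining equivalence of $q$-systems. The subtle point is that these are different kinds of operations — one is an $\F_{q^m}$-linear change of coordinates on $\F_{q^m}^k$ and the other is an $\F_q$-linear recombination of the spanning set — yet both must be absorbed into the notion of $q$-system equivalence. Verifying that the right $\GL_n(q)$-action acts trivially on the $\F_q$-span (rather than requiring it to be quotiented) is the cleanest way past this, and is what makes the correspondence well-defined at the level of equivalence classes.
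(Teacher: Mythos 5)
Your proposal is correct and follows essentially the same route as the paper's own proof: well-definedness via the two group actions (right $\GL_n(q)$ acting trivially on the $\F_q$-column span, left $\GL_k(q^m)$ absorbed into $q$-system equivalence), the distance parameter via Lemma~\ref{lem:rkvG}, and the essentially formal mutual-inverse check. The only points the paper spells out that you leave implicit are routine: that the matrix assembled from an $\F_q$-basis of $\mU$ has $\F_{q^m}$-independent rows (otherwise some hyperplane $\langle x\rangle^\perp$ would contain $\mU$, contradicting $\langle \mU\rangle_{\F_{q^m}}=\F_{q^m}^k$) and that $\Psi([\mU])$ is nondegenerate by Proposition~\ref{prop:newnondegeneracy}, both of which are needed before Lemma~\ref{lem:rkvG} can be ``read in the reverse direction''; also, the isomorphism induced on columns by $G\mapsto MG$ is $x\mapsto Mx$ rather than $M^\top$, a harmless slip.
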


We also observe that combining Lemma \ref{lem:rkvG} with 
Remark \ref{rem:effectivelength} one obtains the following lower bound for the minimum distance of a rank-metric code.

\begin{corollary}\label{cor:lowerbounddistance}
 Let $\mC$ be an $\Fmkd$ code. Then
 $$ d\geq \dim_{\Fq}(\sigma^\rk(\mC))-(k-1)m. $$
\end{corollary}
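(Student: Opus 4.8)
The plan is to reduce to the nondegenerate case and then read the bound directly off Lemma~\ref{lem:rkvG}. Set $n' := \dim_{\Fq}(\sigma^\rk(\mC))$, the effective length of $\mC$. By Remark~\ref{rem:effectivelength}, $\mC$ is isometric to a nondegenerate $[n',k]_{q^m/q}$ code $\mC'$; since a rank-metric isometry preserves the rank weight of every codeword and is bijective, it preserves the minimum distance, so $d = \drk(\mC) = \drk(\mC')$. It therefore suffices to prove $\drk(\mC') \ge n' - (k-1)m$.

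Fix a generator matrix $G'$ of $\mC'$ and let $\mU'$ be the associated $[n',k]_{q^m/q}$ system, which is $n'$-dimensional over $\Fq$ because $\mC'$ is nondegenerate. For any nonzero $v \in \Fm^k$, Lemma~\ref{lem:rkvG} gives
$$\rk(v G') = n' - \dim_{\Fq}(\mU' \cap \langle v\rangle^\perp).$$
The key observation is that $\langle v\rangle^\perp$ is an $\Fm$-hyperplane of $\Fm^k$, hence a $(k-1)$-dimensional $\Fm$-space, so its $\Fq$-dimension equals $(k-1)m$. Consequently $\dim_{\Fq}(\mU' \cap \langle v\rangle^\perp) \le \dim_{\Fq}(\langle v\rangle^\perp) = (k-1)m$, and substituting into the displayed identity yields $\rk(v G') \ge n' - (k-1)m$.

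Taking the minimum over all nonzero $v \in \Fm^k$ gives $\drk(\mC') \ge n' - (k-1)m = \dim_{\Fq}(\sigma^\rk(\mC)) - (k-1)m$, and by the isometry this is exactly the claimed bound for $\mC$. There is no real obstacle here beyond two bookkeeping points: first, that passing to $\mC'$ leaves the minimum distance unchanged (which is precisely what a rank isometry guarantees); and second, that the $\Fq$-dimension of the $\Fm$-hyperplane $\langle v\rangle^\perp$ is $(k-1)m$ and not $k-1$. Both are immediate, so the corollary follows directly from combining Lemma~\ref{lem:rkvG} with Remark~\ref{rem:effectivelength}, exactly as flagged in the sentence preceding the statement.
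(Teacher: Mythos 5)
Your proof is correct and follows exactly the route the paper indicates: it states the corollary as a direct consequence of combining Lemma~\ref{lem:rkvG} with Remark~\ref{rem:effectivelength}, which is precisely your reduction to the nondegenerate case followed by the bound $\dim_{\Fq}(\mU' \cap \langle v\rangle^\perp) \le (k-1)m$. The two bookkeeping points you flag (isometries preserve minimum distance, and the $\Fq$-dimension of an $\Fm$-hyperplane is $(k-1)m$) are handled correctly, so nothing is missing.
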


As an application of Theorem \ref{thm:1-1}, we show that a nondegenerate rank-metric code always have a codeword of rank $\min\{n,m\}$. Note that this the largest possible rank a codeword can possibly have.

\begin{notation}
We denote by $\maxrk(\mC)$ the maximum rank of the codewords of a rank-metric code $\C \subseteq \F_{q^m}^n$. 
\end{notation}

\begin{proposition} \label{prop:max}
Let $\mC$ be a nondegenerate $[n,k]_{q^m/q}$ code, then $\maxrk(\mC)=\min\{n,m\}$. In particular,  if $n=m$ then an $[n,k]_{q^n/q}$ code is nondegenerate if and only if $\maxrk(\mC)=n$.
\end{proposition}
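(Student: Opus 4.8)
The plan is to prove the two inequalities separately. The bound $\maxrk(\mC) \le \min\{n,m\}$ is immediate: for any $v \in \Fm^n$ the matrix $\Gamma(v)$ lies in $\F_q^{n \times m}$, so $\rk(v) = \dim_{\Fq} \sigma^\rk(v) = \rk(\Gamma(v)) \le \min\{n,m\}$. The real content is the reverse inequality, for which I would pass to the geometric side. Fix a generator matrix $G$ of $\mC$ and let $\mU \subseteq \Fm^k$ be the associated $q$-system (the $\Fq$-span of the columns of $G$). Since $v \mapsto vG$ is a bijection between nonzero vectors of $\Fm^k$ and nonzero codewords, and since $\langle v\rangle^\perp$ ranges over all $\Fm$-hyperplanes of $\Fm^k$ as $v$ ranges over the nonzero vectors, Lemma~\ref{lem:rkvG} gives
\[
\maxrk(\mC) = n - \min\{\dim_{\Fq}(\mU \cap H) \st H \text{ an } \Fm\text{-hyperplane of } \Fm^k\}.
\]
Thus it suffices to produce a hyperplane meeting $\mU$ in the smallest dimension permitted by the Grassmann formula, namely $\max\{0,\, n-m\}$.

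For that I would run an averaging argument over the hyperplanes using the Standard Equations (Lemma~\ref{lem:st-eq}) with $r = k-1$. Dividing that identity by the number $\binom{k}{1}_{q^m} = (q^{mk}-1)/(q^m-1)$ of $\Fm$-hyperplanes, the average of $|H \cap (\mU \setminus\{0\})|$ over all hyperplanes equals $(q^n-1)\tfrac{q^{m(k-1)}-1}{q^{mk}-1}$. A one-line estimate, $\tfrac{q^{m(k-1)}-1}{q^{mk}-1} < q^{-m}$, shows this average is strictly smaller than $q^{n-m}$. Hence some hyperplane $H_0$ satisfies $|H_0 \cap (\mU\setminus\{0\})| < q^{n-m}$, that is, $q^{\dim_{\Fq}(\mU\cap H_0)} - 1 < q^{n-m}$. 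Combined with the general lower bound $\dim_{\Fq}(\mU \cap H_0) \ge \max\{0,\, n-m\}$, this forces $\dim_{\Fq}(\mU \cap H_0) = \max\{0,\, n-m\}$ exactly: when $n \ge m$ the integer $q^{\dim}-1$ is trapped in $[q^{n-m}-1,\, q^{n-m})$, and when $n < m$ it is a nonnegative integer strictly below $1$, hence $0$. Feeding this back into the displayed formula yields $\maxrk(\mC) = n - \max\{0,\, n-m\} = \min\{n,m\}$.

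The main obstacle is precisely the existence of a hyperplane in general position with respect to $\mU$ (equivalently, of a codeword of maximal rank); the averaging step is what makes this existence non-obvious, and the slightly delicate point is to extract the \emph{exact} value of $\dim_{\Fq}(\mU\cap H_0)$ from a strict inequality rather than merely an upper bound, which is why I keep careful track of the gap between consecutive admissible values $q^d-1$. For the final assertion with $n=m$, one direction is the statement just proved (since $\min\{n,m\}=n$). For the converse I would argue by contraposition: if $\mC$ is rank-degenerate then $\dim_{\Fq}(\sigma^\rk(\mC)) < n$ by Remark~\ref{rem:effectivelength}, and since $\sigma^\rk(v) \subseteq \sigma^\rk(\mC)$ for every codeword $v$ by Proposition~\ref{prop:supp_propr}, every codeword has rank strictly less than $n$, whence $\maxrk(\mC) < n$.
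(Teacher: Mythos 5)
Your proof is correct and follows essentially the same route as the paper: both translate $\maxrk(\mC)$ into hyperplane intersections via Lemma~\ref{lem:rkvG} and then double-count incidences between $\mU\setminus\{0\}$ and $\Fm$-hyperplanes using the Standard Equations (Lemma~\ref{lem:st-eq}). The paper phrases the count as a lower bound on the total sum (assuming $\maxrk(\mC)<n$) and extracts $m\le\maxrk(\mC)$ by algebra, whereas you phrase it as an upper bound on the average together with a Grassmann-plus-discreteness argument --- the same double count read in the opposite direction.
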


\begin{proof}
 If $\maxrk(\mC)=n$ then the statement is trivially true, so we may assume that $\maxrk(\mC)<n$. Let $\mU$ be any  $\Fmk$  system  associated with $\mC$ via Theorem \ref{thm:1-1}. 
 By Lemma~\ref{lem:rkvG} we have 
 that $\dim(H\cap \mU) \geq n-\maxrk(\mC)$ for each $\Fm$-hyperplane $H$ of $\Fm^k$.  Denote by~$\Lambda$ the set of all $\F_{q^m}$-hyperplanes of $\F_{q^m}^k$. Then we have
 $$
      (q^n-1)\binom{k-1}{1}_{q^m}=\sum_{H\in\Lambda} |H\cap (\mU\setminus\{0\})|\geq (q^{n-\maxrk(\mC)}-1)\binom{k}{1}_{q^m},
 $$
 where the first equality follows from Lemma \ref{lem:st-eq}. The above inequality is equivalent to
 $$ (q^n-1)(q^{(k-1)m}-1) \geq (q^{n-\maxrk(\mC)}-1)(q^{km}-1).$$
 Dividing both sides by $(q^{(k-1)m}-1)$, we obtain
 \begin{align*}
     q^n-1 & \geq (q^{n-\maxrk(\mC)}-1)\left(q^m+ \frac{q^m-1}{q^{(k-1)m}-1}\right) \\
&=q^{n+m-\maxrk(\mC)}-q^m+\frac{(q^{n-\maxrk(\mC)}-1)(q^m-1)}{q^{(k-1)m}-1} \\
&\geq q^{n+m-\maxrk(\mC)}-q^m. 
 \end{align*}
 Since $n-\maxrk(\mC)\ge 1$, this implies $m\leq \maxrk(\mC)$. Since, clearly, $\maxrk(\mC)\leq m$, then they must be equal.
\end{proof}

As an application of Proposition~\ref{prop:max}, we recover the characterization of optimal $\F_{q^m}$-linear anticodes given in \cite[Theorem 18]{ravagnani2016generalized} with a new and concise proof.

\begin{corollary}
 Let $\mC$ be an $\Fmk$ code with $k=\maxrk(\mC)$. If $m \ge n$, then $\mC$ has a basis made of vectors with entries in $\F_q$.
\end{corollary}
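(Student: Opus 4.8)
The plan is to reduce the statement to the nondegenerate case, where Proposition~\ref{prop:max} applies directly. Set $n' := \dim_{\F_q}(\sigma^\rk(\mC))$, the effective length of $\mC$. By Remark~\ref{rem:effectivelength}, $\mC$ is isometric to a nondegenerate code $\mC' \subseteq \F_{q^m}^{n'}$: concretely there is $A \in \GL_n(q)$ such that all codewords of $\mC \cdot A$ are supported on a fixed set of $n'$ coordinates, and $\mC'$ is the restriction of $\mC \cdot A$ to those coordinates. Both operations (right multiplication by $A \in \GL_n(q)$ and deletion of the identically-zero coordinates) are $\F_{q^m}$-linear rank isometries, so $\dim_{\F_{q^m}}(\mC') = k$ and $\maxrk(\mC') = \maxrk(\mC) = k$.

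Next I would apply Proposition~\ref{prop:max} to the nondegenerate code $\mC'$, obtaining $\maxrk(\mC') = \min\{n', m\}$. Since $n' \le n \le m$, this minimum equals $n'$, whence $k = n'$. Therefore $\mC'$ is a $k$-dimensional $\F_{q^m}$-subspace of $\F_{q^m}^{n'} = \F_{q^m}^{k}$, which forces $\mC' = \F_{q^m}^{k}$. In particular the standard basis $e_1, \dots, e_k$ of $\F_{q^m}^{k}$ is a basis of $\mC'$ whose vectors have entries in $\F_q$.

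Finally I would transport this basis back to $\mC$. Write $\bar e_i \in \F_q^n$ for the vector obtained from $e_i$ by appending zeros in the deleted coordinates; then $\bar e_1, \dots, \bar e_k$ form an $\F_{q^m}$-basis of $\mC \cdot A$ lying in $\F_q^n$, and hence $\bar e_1 A^{-1}, \dots, \bar e_k A^{-1}$ form a basis of $\mC = (\mC \cdot A) \cdot A^{-1}$. Since $A^{-1} \in \GL_n(q)$ has entries in $\F_q$ and each $\bar e_i \in \F_q^n$, every $\bar e_i A^{-1}$ again lies in $\F_q^n$, which is exactly the desired conclusion.

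The argument is mostly bookkeeping once the key reduction is in place. The step I expect to require the most care is the passage to the effective length: one must ensure that the isometry carrying $\mC$ to $\mC'$ is realized by an $\F_q$-rational change of coordinates (multiplication by $A \in \GL_n(q)$ together with coordinate deletion and insertion), so that an $\F_q$-rational basis of $\mC'$ genuinely pulls back to an $\F_q$-rational basis of $\mC$. Everything else is an immediate consequence of Proposition~\ref{prop:max} and the hypothesis $m \ge n$.
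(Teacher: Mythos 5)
Your proof is correct, but it takes a genuinely different route from the paper's. The paper argues by induction on $n-k$: since $k=\maxrk(\mC)<n\le m$, Proposition~\ref{prop:max} (used contrapositively) forces $\mC$ to be degenerate, so some $A\in\GL_n(q)$ makes the last column of $GA$ zero; deleting that column, invoking the induction hypothesis on the shorter code, and pulling the resulting $\F_q$-rational basis back through $A^{-1}$ finishes the step. You instead collapse the entire induction into a single reduction: pass to the effective length $n'=\dim_{\F_q}(\sigma^\rk(\mC))$, apply Proposition~\ref{prop:max} positively to the nondegenerate restriction $\mC'$ to get $\maxrk(\mC')=\min\{n',m\}=n'$, conclude $n'=k$ and hence $\mC'=\F_{q^m}^k$, and transport the standard basis back through the zero-padding and $A^{-1}$. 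Both arguments rest on the same key lemma (Proposition~\ref{prop:max}) and on the fact that every coordinate change involved is $\F_q$-rational, but yours avoids induction and yields a sharper structural conclusion along the way: any code with $k=\maxrk(\mC)$ and $m\ge n$ has effective length exactly $k$, i.e.\ it is $\GL_n(q)$-equivalent to $\F_{q^m}^k\times\{0\}^{n-k}$. What the paper's version buys in exchange is that it never needs to set up the effective-length reduction explicitly (your step requiring Proposition~\ref{prop:prel} and Lemma~\ref{lem:supp} to realize the isometry by an element of $\GL_n(q)$ plus deletion of zero coordinates); the induction does that same work implicitly, one coordinate at a time.
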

\begin{proof}
 We prove the result by induction on $n-k$. The case $n=k$ is immediate. Now assume that $n \ge k+1$ and that $\mC$ has $k=\maxrk(\mC)$. Fix a generator matrix $G$ for $\mC$. Since $k<n$, by Proposition \ref{prop:max} there exists $A \in \GL_n(q)$ such that the last column of $G \cdot A$ is zero. Denote by $G'$ the matrix obtained from $G \cdot A$ by deleting its last column. The code generated by $G'$ has $k=\maxrk(\mC)$ and therefore, by the induction hypothesis, has a basis made of vectors with entries in $\F_q$. This means that there exists $B \in \GL_k(q)$ such that $BG'$ (and thus $BGA$) has entries in $\F_q$. Therefore $BG=BGAA^{-1}$ has entries in $\F_q$ as well.
 \end{proof}

We conclude this subsection by surveying the connection 
between the generalized rank weights of an $\Fmk$ rank-metric code and any corresponding $\Fmk$ system. The definitions given here are equivalent to those of \cite{randrianarisoa2020geometric}. 
We denote the set of  Frobenius-closed subspaces of $\Fm^n$ by $\Lambda_q(n,m)$, that is,
$$ \Lambda_q(n,m):=\left\{ \mV\leq \Fm^n \st \theta(\mV)=\mV \right\},$$
where $\theta:x \longmapsto x^q$ is the $q$-Frobenius automorphism in $\Fm$ (extended component-wise to vectors). It is known that $\Lambda_q(n,m)$ corresponds to the set of subspaces of $\Fm^n$ that have a basis of vectors in $\Fq^n$; see \cite[Theorem 1]{giorgetti2010galois}.
\begin{definition} \label{def:rkw}
 Let $\mC$ be an $\Fmk$ code. For every $r=1,\dots,k$, the \textbf{$r$-th generalized rank weight} of $\mC$ is the integer
 $$\drk_r(\mC):=\min\left\{\dim(\mV) \st \mV \in \Lambda_q(n,m), \ \dim(\mV\cap \mC)\geq r\right\}. $$
\end{definition}

The following result was shown in \cite{randrianarisoa2020geometric}. We state it here for completeness and give a proof in the Appendix. 
 
 \begin{theorem}\label{thm:genrankweight}
Let $\mC$ be an $\Fmkd$ nondegenerate code and let $\mU$ be any $[n,k,d]_{q^m/q}$ system associated to $\mC$. For any $r=1,\dots, k$ the $r$-th generalized rank weight is given by
 \begin{align*}
 \drk_r(C) &= n- \max \left\{\dim_{\Fq}(\mU\cap H) \st H \mbox{ is an } \Fm\mbox{-subspace of codim. } r  \mbox{ of } \Fm^k \right\} \\
  &= \min \left\{\dim_{\Fq}(\mU + H) \st H \mbox{ is an } \Fm\mbox{-subspace of codim. } r  \mbox{ of } \Fm^k\right\}-m(k-r).
 \end{align*}
 In particular, the minimum rank distance of $\mC$ is given by 
$$d=n- \max \{\dim_{\Fq}(\mU\cap H) \st H \mbox{ is an } \Fm\mbox{-hyperplane of }  \Fm^k \}.$$
\end{theorem}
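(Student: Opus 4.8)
The plan is to translate the intrinsic definition of $\drk_r(\mC)$ from Definition~\ref{def:rkw}, phrased via Frobenius-closed subspaces, first into a statement about the $r$-dimensional subcodes of $\mC$, and then into the geometry of $\mU$. The first step is to reformulate the generalized rank weight as
$$\drk_r(\mC)=\min\left\{\dim_{\Fq}\big(\sigma^\rk(\mD)\big) \st \mD\subseteq\mC,\ \dim_{\Fm}(\mD)=r\right\}.$$
The two inequalities rest on the observation that $\langle\sigma^\rk(\mD)\rangle_{\Fm}$ is the smallest Frobenius-closed subspace of $\Fm^n$ containing a given subcode $\mD$, together with the fact that an $\Fq$-basis of $\sigma^\rk(\mD)\subseteq\Fq^n$ is also an $\Fm$-basis of its $\Fm$-span, so that $\dim_{\Fm}\langle\sigma^\rk(\mD)\rangle_{\Fm}=\dim_{\Fq}\sigma^\rk(\mD)$. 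For ``$\le$'' one takes $\mV=\langle\sigma^\rk(\mD)\rangle_{\Fm}\in\Lambda_q(n,m)$, which contains $\mD$; for ``$\ge$'' one checks that any $\mV\in\Lambda_q(n,m)$ with $\Fq$-basis $W\subseteq\Fq^n$ satisfies $\sigma^\rk(c)\subseteq W$ for every $c\in\mV$, whence $\sigma^\rk(\mD)\subseteq W$ as soon as $\mD\subseteq\mV$.

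Next I would prove the key dimension formula, which is the generalization of Lemma~\ref{lem:rkvG} from $r=1$ to arbitrary $r$. Writing $G$ for a generator matrix with associated $q$-system $\mU$ and setting $VG=\{vG\st v\in V\}$ for an $r$-dimensional $\Fm$-subspace $V\subseteq\Fm^k$, I claim
$$\dim_{\Fq}\big(\sigma^\rk(VG)\big)=n-\dim_{\Fq}(\mU\cap V^\perp),$$
where $V^\perp\subseteq\Fm^k$ is the $\Fm$-orthogonal space of $V$, of codimension $r$. I would prove this by computing the $\Fq$-orthogonal complement of $\sigma^\rk(VG)$ inside $\Fq^n$. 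Denoting by $g_1,\dots,g_n$ the columns of $G$ and setting $\mu(x)=\sum_i x_i g_i$, the map $\mu\colon\Fq^n\to\mU$ is an $\Fq$-isomorphism (the columns are $\Fq$-independent by nondegeneracy). Expanding in a basis $\Gamma$ shows that $x\in\Fq^n$ is $\Fq$-orthogonal to $\sigma^\rk(vG)$ if and only if $v\cdot\mu(x)=0$; hence $x$ lies in the orthogonal complement of $\sigma^\rk(VG)$ precisely when $\mu(x)\in\mU\cap V^\perp$. Transporting through the isomorphism $\mu$ gives $\dim_{\Fq}\sigma^\rk(VG)^{\perp}=\dim_{\Fq}(\mU\cap V^\perp)$, and the formula follows.

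Finally I would assemble the pieces. Since $v\mapsto vG$ is an $\Fm$-isomorphism $\Fm^k\to\mC$, every $r$-dimensional subcode is of the form $VG$ for a unique $r$-dimensional $V$, and $V\mapsto V^\perp$ is a dimension-reversing bijection onto the codimension-$r$ subspaces $H$. The reformulation and the key formula then yield $\drk_r(\mC)=n-\max\{\dim_{\Fq}(\mU\cap H)\}$ over all such $H$, which is the first displayed equality. The second equality is purely linear-algebraic: from $\dim_{\Fq}H=m(k-r)$ and the dimension formula $\dim_{\Fq}(\mU+H)=n+m(k-r)-\dim_{\Fq}(\mU\cap H)$ one rewrites $n-\dim_{\Fq}(\mU\cap H)=\dim_{\Fq}(\mU+H)-m(k-r)$ and trades $\max$ for $\min$. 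The closing assertion on $d$ is the case $r=1$, where the codimension-$r$ subspaces are exactly the hyperplanes.

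The heart of the argument is the key dimension formula, whose orthogonality proof is short once set up; the more delicate point is the clean reformulation in the first step, where one must ensure that passing between a subcode and its minimal Frobenius-closed hull preserves dimensions. That hinges on the equivalence of $\Fq$- and $\Fm$-linear independence for vectors lying in $\Fq^n$, which is exactly the content encoded in $\Lambda_q(n,m)$.
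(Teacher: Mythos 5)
Your proposal is correct, but it follows a genuinely different route from the paper. The paper's proof is a short reduction to the Hamming metric: it invokes the cited identity $\drk_r(\mC)=\min\{\dH_r(\mC\cdot A) \st A\in\GL_n(q)\}$, then the classical geometric description of generalized Hamming weights (hyperplane/subspace counting on the columns of a generator matrix), and finally the basis-intersection identity \eqref{eq:prel} to convert the maximum of $|\{i \st (GA)_i\in H\}|$ over $A\in\GL_n(q)$ into $\dim_{\Fq}(\mU\cap H)$. You instead work intrinsically in the rank metric, never leaving it: you first prove the equivalence of Definition~\ref{def:rkw} with the subcode-support characterization $\drk_r(\mC)=\min\{\dim_{\Fq}(\sigma^\rk(\mD)) \st \mD\subseteq\mC,\ \dim_{\Fm}(\mD)=r\}$ (a fact the paper effectively outsources to the citation), and then establish the dimension formula $\dim_{\Fq}(\sigma^\rk(VG))=n-\dim_{\Fq}(\mU\cap V^{\perp})$ by an explicit orthogonal-complement computation through the $\Fq$-isomorphism $\mu:\Fq^n\to\mU$. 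This formula is exactly the $r$-dimensional generalization of Lemma~\ref{lem:rkvG}, and even in the case $r=1$ your duality proof differs from the paper's basis-counting argument for that lemma. What each approach buys: the paper's proof is two lines long given the external results it cites; yours is self-contained (no appeal to the rank-versus-Hamming comparison of generalized weights), and the key formula you prove has independent value since it exhibits $\mU\cap V^{\perp}$ as literally the orthogonal complement of the support of the subcode $VG$, transported by $\mu$. Two small points of hygiene: in your first step, "$\mV$ with $\Fq$-basis $W\subseteq\Fq^n$" should read that $\mV$ has an $\Fm$-basis contained in $\Fq^n$ and $W$ denotes its $\Fq$-span (this exists by the characterization of $\Lambda_q(n,m)$ the paper quotes from the literature); and your claim that $\Fq$-independent vectors of $\Fq^n$ remain $\Fm$-independent, which you correctly identify as the crux of the dimension bookkeeping, deserves its one-line proof by expanding $\Fm$-coefficients over a basis of $\Fm/\Fq$.
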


\subsection{Simplex and One-Weight Codes in the Rank Metric} 

In this subsection we use the geometric approach on rank-metric codes to define simplex codes as the natural counterpart of simplex Hamming-metric codes. In particular, this allows to characterize one-weight codes in the rank metric, recovering the results of~\cite{randrianarisoa2020geometric} in this context.

\begin{lemma}\label{lem:tpowers}
 Let $a,b,c,d$ be positive integers such that $a\leq b$ and $c\leq d$, and let $t\geq2$ be an integer. Suppose that $(t^a-1)(t^b-1)=(t^c-1)(t^d-1)$. Then $a=c$ and $b=d$.
\end{lemma}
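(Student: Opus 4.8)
The plan is to expand both sides and reduce the statement to two cleaner claims: first that the exponent sums agree, $a+b=c+d$, and only afterwards that the unordered pairs $\{a,b\}$ and $\{c,d\}$ coincide. Writing $P=(t^a-1)(t^b-1)$ and $Q=(t^c-1)(t^d-1)$, the hypothesis $P=Q$ expands to
\[
t^{a+b}-t^a-t^b+1=t^{c+d}-t^c-t^d+1,
\]
so everything will follow once I control the quantity $a+b$ and then the sum $t^a+t^b$.

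To pin down $a+b$, I would first record the elementary sandwich
\[
\frac{t^{a+b}}{4}\le (t^a-1)(t^b-1)<t^{a+b},
\]
valid for all $a,b\ge 1$ and $t\ge 2$, since each of $1-t^{-a}$ and $1-t^{-b}$ lies in $[\tfrac12,1)$. Applying this to both $P$ and $Q$ and using $P=Q$ yields $t^{|a+b-c-d|}<4$, hence $|a+b-c-d|\le 1$ (because $t^2\ge 4$). It then remains to exclude the off-by-one case, say $a+b=c+d+1$ after possibly swapping the two pairs. Setting $s=c+d$, the expanded identity becomes $t^s(t-1)=t^a+t^b-t^c-t^d$; a short size estimate (using $b\le s$, which holds because $a\ge 1$) forces $b=s$ and hence $a=1$, and substituting back gives $t^s(t-2)=t-t^c-t^d<0$, which is impossible. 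This establishes $a+b=c+d$.

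With $s:=a+b=c+d$ fixed, the expanded identity collapses to $t^a+t^b=t^c+t^d$. If the two pairs differed I may assume $a<c\le s/2$, so that $b=s-a>s-c=d$, and then the factorization
\[
t^a+t^b-t^c-t^d=(t^{\,c-a}-1)(t^{\,d}-t^{\,a})>0
\]
contradicts the equality $t^a+t^b=t^c+t^d$. Therefore $a=c$ and $b=d$, as claimed. I expect the main obstacle to be the very first step: the naive ``leading term'' heuristic $(t^a-1)(t^b-1)\approx t^{a+b}$ is too crude precisely for small $t$ — notably $t=2$, the relevant case for binary parameters — so the delicate point is the uniform treatment of the off-by-one case, which is where the argument must be run most carefully.
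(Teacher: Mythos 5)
Your proof is correct, but it takes a genuinely different route from the paper's. The paper argues in one stroke by contradiction: assuming without loss of generality $a<c$ (and noting $b>a$), it expands $(t^a-1)(t^b-1)-(t^c-1)(t^d-1)=0$, divides by $t^a$, and observes that every surviving power of $t$ has a strictly positive exponent, so the identity reads $-1\equiv 0 \pmod{t}$ --- impossible for $t\ge 2$. That is a purely divisibility-based argument with no size estimates and no case analysis. Your argument instead runs in two stages: the sandwich $t^{a+b}/4\le (t^a-1)(t^b-1)<t^{a+b}$ pins down $|a+b-c-d|\le 1$; a size estimate eliminates the off-by-one case (I checked it: if $b\le s-1$ then $t^a+t^b-t^c-t^d<2t^{s-1}\le t^s\le t^s(t-1)$, forcing $b=s$, $a=1$, and then $t^s(t-2)=t-t^c-t^d<0$ is indeed absurd); and the equal-sum case reduces to $t^a+t^b=t^c+t^d$, settled by the factorization $(t^{c-a}-1)(t^d-t^a)>0$, which is valid since $c+d-a=b$. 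All steps hold uniformly, including at $t=2$, which you rightly flagged as the delicate point. As for what each approach buys: the paper's mod-$t$ trick is much shorter and avoids every case distinction, while yours is longer but isolates two natural, reusable intermediate facts --- equality of the exponent sums, and injectivity of $(a,b)\mapsto t^a+t^b$ on ordered pairs with a fixed sum --- which makes the structure of the statement more transparent and does not rely on the specific algebraic form of the expanded identity.
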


\begin{proof}
 By contradiciton, assume that $(a,b) \neq (c,d)$. Moreover, without loss of generality we can assume $a \leq c$. Since $(a,b) \neq (c,d)$, then  we need to have  $a < c\leq d$ (if $a=c$ clearly also $b=d$). Moreover, we also have that $b>a$, otherwise the equality is not possible.
 By expanding the equality $(t^a-1)(t^b-1)-(t^c-1)(t^d-1)=0$, and dividing by $t^a$, we get 
 $$ t^{b}-t^{b-a}-t^{c+d-a}+t^{c-a}+t^{d-a}-1=0.$$
 All the exponents of $t$ appearing above  are  positive integers, hence we get a contradiction, since the left hand side is equal to $-1 \mod t$.
\end{proof}

\begin{proposition}\label{prop:simplexcode}
 Let $k\geq 2$, let $\mC$ be a $[km,k]_{q^m/q}$ code, and let $G$ be a generator matrix of~$\mC$. The following are equivalent.
 \begin{enumerate}
     \item $\mC$ is nondegenerate.
     \item The $\Fq$-span of the columns of $G$ is $\Fm^k$.
     \item $\mC$ is a one-weight code (with minimum distance $m$).
     \item $\drk(\mC^\perp)>1$.
     \item $\drk(\mC^\perp)=2$.
     \item $\mC$ is linearly equivalent to a code whose generator matrix is
     \begin{equation}\label{eq:standardformsimplex} \left( \begin{array}{c|c|c|c}I_k & \alpha I_k & \cdots & \alpha^{m-1}I_k \end{array}\right),\end{equation}
     where $\alpha \in \Fm$ satisfies $\Fm=\Fq(\alpha)$.
 \end{enumerate}
\end{proposition}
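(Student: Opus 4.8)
The plan is to prove the six conditions equivalent by assembling a few short implications into one cycle, exploiting that the length $n=km$ is extremal: since $\dim_{\Fq}(\Fm^k)=km$, the $\Fq$-span of the $km$ columns of $G$ equals $\Fm^k$ precisely when it has full $\Fq$-dimension $km$. With this observation, $(1)\Leftrightarrow(2)\Leftrightarrow(4)$ are immediate from Proposition~\ref{prop:newnondegeneracy} (its items $(1)$, $(3)$ and $(4)$, read with $n=km$), and $(5)\Rightarrow(4)$ is trivial. It then suffices to close the loop by proving $(2)\Rightarrow(6)\Rightarrow(3)\Rightarrow(1)$ together with $(1)\Rightarrow(5)$.

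For $(2)\Rightarrow(6)$ I would argue that the $km$ columns of $G$, spanning the $km$-dimensional $\Fq$-space $\Fm^k$, must form an $\Fq$-basis of $\Fm^k$. The columns of the target matrix in~\eqref{eq:standardformsimplex} are exactly the vectors $\alpha^j e_i$ ($1\le i\le k$, $0\le j\le m-1$), which also form an $\Fq$-basis of $\Fm^k$ because $1,\alpha,\dots,\alpha^{m-1}$ is an $\Fq$-basis of $\Fm=\Fq(\alpha)$. The unique change-of-basis matrix $A\in\GL_{km}(q)$ carrying the column basis of $G$ to this ordered basis then satisfies $GA=(I_k\mid \alpha I_k\mid\cdots\mid\alpha^{m-1}I_k)$, so $\mC\cdot A$ is the stated code and $\mC$ is equivalent to it. For $(6)\Rightarrow(3)$, a nonzero $v\in\Fm^k$ gives the codeword $vG=(v\mid\alpha v\mid\cdots\mid\alpha^{m-1}v)$; picking a coordinate $v_i\ne 0$, its entries already contain $v_i,\alpha v_i,\dots,\alpha^{m-1}v_i$, whose $\Fq$-span is $v_i\,\Fq(\alpha)=\Fm$. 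Hence every nonzero codeword has rank exactly $m$, so $\mC$ is a one-weight code of weight $m$.

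The substantive step is $(3)\Rightarrow(1)$, and here I would combine a counting argument with Lemma~\ref{lem:tpowers}. Passing to the effective-length reduction of $\mC$ (Remark~\ref{rem:effectivelength}) yields a nondegenerate one-weight code $\mC'$ of weight $m$, length $n'=\dim_{\Fq}(\sigma^\rk(\mC))$ and dimension $k$, with associated $[n',k]_{q^m/q}$ system $\mU'$. By Lemma~\ref{lem:rkvG}, $\rk(vG')=m$ for all nonzero $v$ forces $\dim_{\Fq}(\mU'\cap H)=n'-m$ for every $\Fm$-hyperplane $H$ (as $\langle v\rangle^\perp$ ranges over all hyperplanes). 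Feeding this into the Standard Equation (Lemma~\ref{lem:st-eq}) with $r=k-1$ and simplifying the Gaussian binomials gives
\[
(q^{mk}-1)(q^{n'-m}-1)=(q^{n'}-1)(q^{m(k-1)}-1).
\]
Using $m\le n'\le km$ (from Proposition~\ref{prop:max} and Proposition~\ref{prop:nleqkm}) one checks the four exponents are positive and correctly ordered, so Lemma~\ref{lem:tpowers} forces $n'=km$; that is, $\mC$ is nondegenerate. Finally, for $(1)\Rightarrow(5)$ I would invoke the form~\eqref{eq:standardformsimplex} (now available since $(1)\Rightarrow(6)$ has been established) to exhibit a weight-$2$ dual codeword: the vector with entry $\alpha$ in the first coordinate of the $I_k$-block and $-1$ in the first coordinate of the $\alpha I_k$-block (zero elsewhere) lies in $\mC^\perp$ and has rank $2$ when $m\ge 2$, so $\drk(\mC^\perp)\le 2$; together with $(4)$ this gives $\drk(\mC^\perp)=2$.

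The main obstacle is precisely $(3)\Rightarrow(1)$: every other implication is either a direct translation through Proposition~\ref{prop:newnondegeneracy} or a one-line computation, whereas ruling out degenerate one-weight codes of weight $m$ genuinely requires the rigidity of the integer identity above, which is exactly what Lemma~\ref{lem:tpowers} supplies. One minor point to flag is the case $m=1$, where $\mC=\Fm^k$ is the full space and conditions $(3)$ and $(5)$ fail; the equivalence as stated is really intended for $m\ge 2$, which is also what the rank-$2$ construction in $(1)\Rightarrow(5)$ uses via $\alpha\notin\Fq$.
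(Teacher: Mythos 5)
Your proposal is correct and follows essentially the same approach as the paper: the $(1)/(2)/(4)$ equivalences via Proposition~\ref{prop:newnondegeneracy}, the change-of-basis argument for $(2)\Rightarrow(6)$, the explicit rank-$2$ dual codeword $\alpha e_1-e_{k+1}$ for condition $(5)$, and the crucial step $(3)\Rightarrow(1)$ via the Standard Equations (Lemma~\ref{lem:st-eq}) together with Lemma~\ref{lem:tpowers} are exactly the paper's ingredients, with only the routing of the implication cycle (and your direct computation of $(6)\Rightarrow(3)$ in place of the paper's $(2)\Rightarrow(3)$ via Lemma~\ref{lem:rkvG}) differing cosmetically. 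One small correction to your closing remark: when $m=1$ condition $(3)$ does not actually fail (every nonzero vector has rank $1=m$); it is only $(5)$ that fails, since $\mC^\perp=\{0\}$ has minimum distance $n+1$ by convention.
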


\begin{proof}
\underline{$(1) \Rightarrow (2)$}: If $\mC$ is nondegenerate, then its support has dimension $km$, which is also the dimension of the 
 associated $[km,k]_{q^m/q}$ system. 

\underline{$(2) \Rightarrow (6)$}:  The code $\mC$ has effective length $km$ ands $\mU=\Fm^k$ as corresponding $\Fmk$ system. Hence, $\mU$ has a basis given by $\mB=\{\alpha^ie_j \st 0\leq i \leq m-1, 0\leq j \leq k-1  \}$. Thus, $\mC$ belongs to the same equivalence class of the code whose generator matrix is \eqref{eq:standardformsimplex}.

\underline{$(6) \Rightarrow (5)$}: Without loss of generality, we can assume that $\mC$ is the code whose generator matrix $G$ is \eqref{eq:standardformsimplex}. Since $\mC$  is nondegenerate, by Proposition~\ref{prop:newnondegeneracy} we have $\drk(\mC^\perp)>1$. Moreover, $G$ is a parity check matrix for $\mC^\perp$ and from that  it is easy to see that the vector $v=\alpha e_1-e_{k+1}$ belongs to $\mC^\perp$ and  has rank weight 2. Thus $\drk(\mC^\perp)=2$.

\underline{$(5) \Rightarrow (4)$}: Clear.

\underline{$(4) \Rightarrow (1)$}: The equivalence between $(4)$ and $(1)$ holds for every rank-metric code, by Proposition \ref{prop:newnondegeneracy}.

\underline{$(2) \Rightarrow (3)$}: Let $\mC$ be the $\Fmk$ code generated by $G$. By hypothesis,  the $\Fmk$ system corresponding to $\mC$ is $\mU=\Fm^k$. Moreover, for every nonzero $v\in\Fm^k$, by \eqref{eq:rankvG} it holds that $$\rk(vG)=km-\dim_{\Fq}(\mU\cap \langle v\rangle^\perp)=km-(k-1)m=m.$$ 

\underline{$(3) \Rightarrow (1)$}: Let $\mC$ be a $[km,k]_{q^m/q}$ code. Let $n\leq km$ be its effective length, that is, $n=\dim(\sigma^\rk(\mC))$. This means that $\mC$ can be isometrically embedded in $\Fm^n$, obtaining a code~$\mC'$.
Then $\mC'$ is a nondegenerate $\Fmk$ code with the same weight distribution as $\mC$. In particular, $\mC'$ is a one-weight code as well. Fix a generator matrix for $\mC'$ and consider the associated $\Fmk$ system, which we call $\mU$. Since $\mC'$ is a one-weight code,  we have $|H\cap (\mU\setminus\{0\})|=(q^a-1)$ for every $\Fm$-hyperplane of $\Fm^k$.  Therefore, if we denote by $\Lambda$ the set of all the $\Fm$-hyperplanes in $\Fm^k$, we have 
$$ \sum_{H\in \Lambda} |H\cap (\mU\setminus\{0\})|=\binom{k}{1}_{q^m}(q^a-1).$$
Moreover, by applying Equation \eqref{eq:st-eq} to the right-hand side, we obtain 
$$ (q^{km}-1)(q^a-1)=(q^{(k-1)m}-1)(q^n-1).$$
By Lemma \ref{lem:tpowers}, we have $a=(k-1)m$ and $n=km$. Hence $\mC$ itself is nondegenerate.
\end{proof}

We call \textbf{simplex rank-metric code}
a code that satisfies
any of the equivalent conditions in Proposition \ref{prop:simplexcode}.
Note that Proposition \ref{prop:simplexcode} also implies the following characterization of one-weight codes in the rank metric, which is the analogue of the main result of~\cite{bonisoli1983}.

\begin{corollary}[Classification of one-weight rank-metric codes]\label{cor:classificationsimplex}
Let $k \ge 2$ and let $\mC$ be an $\Fmkd$ one-weight code. Then, the effective length of $\mC$ is $km$ and $d=m$. That is, $\mC$ is isometric to a simplex rank-metric $[km,k,m]_{q^m/q}$ code.
\end{corollary}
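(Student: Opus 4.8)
The plan is to reduce to the nondegenerate case and then run the same counting argument as in the implication $(3)\Rightarrow(1)$ of Proposition~\ref{prop:simplexcode}, upgrading it so that it also pins down $d$. First I would set $n':=\dim_{\Fq}(\sigma^\rk(\mC))$ to be the effective length. By Remark~\ref{rem:effectivelength}, $\mC$ is isometric to a nondegenerate $[n',k,d]_{q^m/q}$ code $\mC'$, and since an isometric embedding preserves rank weights, $\mC'$ is again a one-weight code, all of whose nonzero codewords have rank $d$. By Proposition~\ref{prop:nleqkm} we already know $n'\le km$; the crux is to show that equality holds.

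To do so, I would fix a generator matrix $G$ of $\mC'$ and let $\mU$ be the associated $[n',k,d]_{q^m/q}$ system from Theorem~\ref{thm:1-1}. By Lemma~\ref{lem:rkvG}, for every nonzero $v\in\Fm^k$ we have $\rk(vG)=n'-\dim_{\Fq}(\mU\cap\langle v\rangle^\perp)=d$. As $v$ runs over the nonzero vectors, $\langle v\rangle^\perp$ runs over all $\Fm$-hyperplanes of $\Fm^k$, so every such hyperplane $H$ satisfies $\dim_{\Fq}(\mU\cap H)=n'-d=:a$, i.e. $|H\cap(\mU\setminus\{0\})|=q^{a}-1$. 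Summing this constant over all hyperplanes and comparing with the Standard Equation of Lemma~\ref{lem:st-eq} (taken with $r=k-1$) gives, after clearing the factor $q^m-1$ common to the two Gaussian binomials, the identity
\[
(q^{n'}-1)(q^{(k-1)m}-1)=(q^{km}-1)(q^{a}-1).
\]

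The main, and essentially only delicate, step is extracting $n'$ and $a$ from this identity via the uniqueness Lemma~\ref{lem:tpowers}. First one checks $a\ge 1$: otherwise the right-hand side vanishes while the left-hand side is strictly positive (here $k\ge 2$ and $m\ge 1$), a contradiction. Hence all four exponents are positive integers. Since $a=n'-d\le n'\le km$ and $(k-1)m<km$, the value $km$ is the largest of the four exponents; matching ordered pairs in Lemma~\ref{lem:tpowers} therefore forces $\max\{n',(k-1)m\}=km$, whence $n'=km$, and then the smaller exponents must agree, giving $a=(k-1)m$. Consequently the effective length is $n'=km$ and $d=n'-a=km-(k-1)m=m$, as claimed.

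Finally, $\mC'$ is a nondegenerate $[km,k]_{q^m/q}$ one-weight code, so Proposition~\ref{prop:simplexcode} identifies it as a simplex rank-metric code; therefore $\mC$ is isometric to a simplex $[km,k,m]_{q^m/q}$ code. I expect the ordering argument in the application of Lemma~\ref{lem:tpowers} to be the only place requiring genuine care, precisely because one must invoke the a priori bound $n'\le km$ from Proposition~\ref{prop:nleqkm} to exclude the spurious pairing of the exponents $n'$ and $a$ with the wrong sides.
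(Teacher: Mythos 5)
Your proof is correct and takes essentially the same approach as the paper: the paper's own argument likewise reduces to the effective length via Remark~\ref{rem:effectivelength} and relies on the identical counting (standard equations of Lemma~\ref{lem:st-eq} plus the exponent-uniqueness Lemma~\ref{lem:tpowers}) carried out in the implication $(3)\Rightarrow(1)$ of Proposition~\ref{prop:simplexcode}. The only difference is organizational --- the paper splits into the cases $n\le km$ and $n>km$ and cites Proposition~\ref{prop:simplexcode} as a black box, whereas you rerun the counting uniformly on the embedded nondegenerate code and make the exponent-ordering step (via $n'\le km$ from Proposition~\ref{prop:nleqkm}) explicit.
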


\begin{proof}
 If $n\leq km$, as shown in the proof of Proposition \ref{prop:simplexcode}, it has to be $n=km$ and $\mC$ is a simplex rank-metric code. Assume now $n>km$. Since the effective length of an $\Fmk$ is always at most $km$, then we can isometrically embed $\mC$ in a $[km,k]_{q^m/q}$ code $\mC^\prime$, with the same weight distribution. By Proposition \ref{prop:simplexcode}, $\mC^\prime$ has to be a simplex rank-metric code.
\end{proof}

We remark that there is a strong analogy between simplex rank-metric codes and their homonyms in the Hamming metric, which is confirmed by both their weight distributions and by geometric characterization. 

Indeed, by Corollary \ref{cor:classificationsimplex}, simplex rank-metric codes are the only nondegenerate one-weight codes in the rank-metric, just like simplex codes in the Hamming metric, up to repetition. In fact, 
simplex codes in the Hamming metric are the only 
{projective} one-weight codes (where \textbf{projective} means that no two columns of one, and thus any, generator matrix are linearly dependent\label{pagedefproj}).

From a geometric point of view, simplex codes in the Hamming metric have a generator matrix whose columns are formed by all the points of $\PG(k-1,q)$. In the rank-metric, simplex codes are associated 
to the $[km,k]_{q^m/q}$ system $\Fm^k$, which is the natural analogue in the rank metric.

We conclude by observing that a definition of simplex code in the rank metric has been recently proposed in \cite{martinez2020hamming} (the definition has been given for sum-rank-metric codes, which specialize to rank-metric codes by taking a single matrix block).
The simplex codes defined in \cite{martinez2020hamming} are different from the simplex codes considered in this paper. For example, one can check they are not one-weight in general. From a geometric viewpoint, the definition of simplex code proposed in this paper appears therefore more natural.

%%%%%%%%%%%%%%%%%%%%%%%

\section{From Rank-Metric to Hamming-Metric Codes}\label{sec:rank-hamming}
\label{sec:4}

In this section we explore various connections between codes in the rank and in the Hamming metric. In particular, we show how to construct a Hamming-metric code from a rank-metric one and describe how the parameters of the two codes relate to each other. 
%We start by introducing linear sets in finite geometry.

\subsection{Linear Sets}

Linear sets in finite geometry can be viewed as a generalizations of subgeometries. Their name was first proposed by Lunardon in \cite{lunardon1999normal}, where linear sets are used for special constructions of blocking sets. The very first example of linear set 
is probably due to Brouwer and Wilbrink; see \cite{brouwer1982blocking}. The interested reader is referred to \cite{polverino2010linear} for an in-depth treatment of linear sets. 

A special family of linear sets, which is of particular interest for this paper, is the one of scattered linear sets introduced by Blokhuis and Lavrauw in \cite{blokhuis2000scattered}. Recently, Sheekey and Van de Voorde observed a connection between scattered linear sets and rank-metric codes with optimal parameters in \cite{sheekey2016new,Sheekey2020}; see \cite{polverino2020connections} for a survey on this topic.

\begin{definition}
Let $\mU$ be an $\Fmk$ system. The $\F_q$-\textbf{linear set} in $\PG(k-1, q^m)$ of rank $n$ associated to $\mU$ is the set $$L_\mU:=\{\langle u \rangle_{\F_{q^m}} \st u\in \mU\setminus\{0\}\},$$
where $\langle u \rangle_{\F_{q^m}}$ denotes the projective point corresponding to $u$.
\end{definition}

Let  $\Lambda=\PG(W,\Fm)$ be the projective subspace corresponding to the $\Fm$-subspace~$W$ of~$\Fm^k$. We define the \textbf{weight} of $\Lambda$ in $L_\mU$ as the integer
$$ \wt_{\mU}(\Lambda):=\dim_{\Fq}(\mU\cap W).$$
If $\Lambda$ is an hyperplane, that  is, if $\Lambda=\PG(W,\Fm)$ with $W=\langle v\rangle^\perp$ for some nonzero $v\in\Fm^k$, then $\wt_{\mU}(\Lambda)=n-\rk(vG)$,  where $G$ is a $k\times n$ matrix associated to $\mU$; see Lemma \ref{lem:rkvG}.
Observe moreover that for a point $P\in \PG(k-1,q^m)$ we have that $P \in L_\mU$ if and only if $\wt_{\mU}(P)\geq 1$. 

\begin{remark}
The original definition of linear sets does not assume the space $\mU$ to be a $\Fmk$ system, i.e., that $\langle \mU\rangle_{\Fm}$ is the whole space $\Fm^k$. However, if $\dim_{\Fm}(\langle \mU\rangle_{\Fm})=k-i$, one can assume up to equivalence that $\mU\subseteq \langle e_1,\ldots,e_{k-i}\rangle_{\Fm}=:V$, and then study $\mU$ in the projective subspace $\PG(k-i-1,q^m)$ induced by $V$. 
\end{remark}

For any $\Fmk$ system $\mU$, the cardinality of the associated linear set $L_\mU$ satisfies 
\begin{equation}\label{eq:linear_set} |L_\mU|\leq \frac{q^n-1}{q-1}. \end{equation}
A linear set $L_\mU$ whose cardinality meets \eqref{eq:linear_set} with equality is said to be \textbf{scattered}. Equivalently, a linear set $L_{\mU}$ is scattered if and only if $\wt_{\mU}(P)=1$ for each $P\in L_{\mU}$. 
We also observe that \eqref{eq:linear_set} can be refined as follows.

\begin{lemma}\label{lem:linear_set_multiplicity}
 Let $\mU$ be an $\Fmk$  system. Then
 $$\sum_{P \in \PG(k-1,q^m)}\frac{q^{\wt_{\mU}(P)}-1}{q-1}=\frac{q^n-1}{q-1}. $$
\end{lemma}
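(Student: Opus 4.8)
The plan is to count the nonzero vectors of $\mU$ by partitioning them according to the projective point they determine. First I would observe that the points of $\PG(k-1,q^m)$ partition $\Fm^k \setminus \{0\}$ into classes, where the class corresponding to a point $P = \langle w\rangle_{\Fm}$ consists of the $q^m - 1$ nonzero vectors of the $1$-dimensional $\Fm$-space $\langle w\rangle_{\Fm}$. Intersecting with $\mU$, the nonzero vectors of $\mU$ lying on $P$ are precisely those in $(\mU \cap \langle w\rangle_{\Fm}) \setminus \{0\}$, an $\Fq$-space whose dimension is $\wt_{\mU}(P)$ by definition, hence of cardinality $q^{\wt_{\mU}(P)} - 1$.

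Next, since every nonzero vector of $\mU$ determines a unique projective point, summing these contributions over all $P \in \PG(k-1,q^m)$ recovers each nonzero vector of $\mU$ exactly once:
$$\sum_{P \in \PG(k-1,q^m)} \left(q^{\wt_{\mU}(P)} - 1\right) = |\mU \setminus \{0\}| = q^n - 1,$$
where the last equality uses $\dim_{\Fq}(\mU) = n$. Dividing both sides by $q - 1$ yields the claimed identity.

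There is essentially no obstacle here; the only point requiring a moment of care is verifying that the points of $\PG(k-1,q^m)$ induce a \emph{disjoint} cover of $\mU \setminus \{0\}$, i.e.\ that distinct points contribute disjoint sets of vectors and that every nonzero vector of $\mU$ is accounted for. Both facts follow immediately from $\langle u\rangle_{\Fm} = \langle u'\rangle_{\Fm}$ if and only if $u$ and $u'$ are $\Fm$-proportional, so that each $u \in \mU\setminus\{0\}$ lies in exactly one class. Finally, note that points $P \notin L_{\mU}$ satisfy $\wt_{\mU}(P) = 0$ and hence contribute the term $q^0 - 1 = 0$; they may therefore be harmlessly included, which is why the summation ranges over all of $\PG(k-1,q^m)$ rather than only over $L_{\mU}$.
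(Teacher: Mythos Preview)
Your proof is correct and follows essentially the same approach as the paper: both arguments partition $\mU\setminus\{0\}$ according to the $1$-dimensional $\Fm$-subspaces (projective points) and count, with the paper phrasing this step as an application of the Standard Equations (Lemma~\ref{lem:st-eq}) for $r=1$, while you carry out the identical counting directly.
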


\begin{proof}
Let $\Lambda_1$ be the set of $1$-dimensional $\Fm$-subspaces of $\Fm^k$. Then, we have
 \begin{align*} \sum_{P \in \PG(k-1,q^m)} \hspace{-0.1cm} \frac{q^{\wt_{\mU}(P)}-1}{q-1}&=\frac{1}{q-1} \sum_{V \in \Lambda_1} (q^{\dim_{\Fq}(\mU\cap V)}-1) =\frac{1}{q-1} \sum_{V \in \Lambda_1} |V\cap(\mU\setminus \{0\})| =\frac{q^n-1}{q-1},
 \end{align*} \nopagebreak
 where the latter equality follows from Lemma \ref{lem:st-eq}.
\end{proof}

\subsection{The Associated Hamming-Metric Code}
\label{sub:assoc}

The notion of a linear set allows us to describe a connection between rank-metric codes and some particular  codes in the Hamming metric. This connection was also  observed in \cite{sheekey2019scatterd}. For a $\Fmk$ system $\mU$ and a point $P \in \PG(k-1,q^m)$, define
$$ \mm_{\mU}(P):=\frac{q^{\wt_{\mU}(P)}-1}{q-1}.$$
The identity of Lemma \ref{lem:linear_set_multiplicity} can be written as
\begin{equation}\label{eq:length_multiplicity}\sum_{P \in \PG(k-1,q^m)}\mm_{\mU}(P)=\frac{q^n-1}{q-1}.\end{equation}
Denote by $\mU(n,k)_{q^m/q}$ the set of $\Fmk$ system and by
$\mP(n,k)_{q^m}$ the set of projective  $[n,k]_{q^m}$ systems. Define the map
$$\begin{array}{rccc}  &\mU(n,k)_{q^m/q} & \longrightarrow & \mP(\frac{q^n-1}{q-1},k)_{q^m}, \\ & \mU & \longmapsto & (L_{\mU},\mm_\mU),\end{array}$$
where $(L_{\mU},\mm_\mU)$ denotes the multiset $L_{\mU}$ with multiplicity function $\mm_\mU$. The parameters $\frac{q^n-1}{q-1}$ and $k$ of the projective system $(L_\mU,\mm_\mU)$ directly follow from \eqref{eq:length_multiplicity}.
It is easy to see that this map is compatible with the equivalence relations on  $\mU(n,k)_{q^m/q}$ and on $\mP(\frac{q^n-1}{q-1},k)_{q^m}$. Indeed, the actions defining the equivalence classes are given in both cases by the group $\mathrm{PGL}(k,q^m)$. We thus constructed a map
$$\begin{array}{rccc} \Ext^\HH: &\mU[n,k]_{q^m/q} & \longrightarrow & \mP[\frac{q^n-1}{q-1},k]_{q^m}, \\ 
%& \mU & \longmapsto & (L_{\mU},\mm_\mU),
\end{array}$$
where ${\mU[n,k]_{q^m/q}}$ and ${\mP[\frac{q^n-1}{q-1},k]_{q^m}}$ denote  the set of equivalence classes of $\Fmk$ systems and the set of equivalence classes of projective ${[\frac{q^n-1}{q-1},k]_{q^m}}$ systems, respectively. This maps leaves also the parameter $d$ of the projective ${[\frac{q^n-1}{q-1},k]_{q^m}}$ system fixed, as the following result shows.

\begin{lemma}\label{lem:min_distance_associated_H}
 Let $[\mU]$ be the equivalence class of $[n,k,d]_{q^m/q}$ systems. Then $[(L_{\mU},\mm_\mU)]$ is the equivalence class of a projective $[\frac{q^{n}-1}{q-1},k,\frac{q^n-q^{n-d}}{q-1}]_{q^m}$ system. In other words, the map 
 $$\Ext^\HH: \mU[n,k,d]_{q^m/q}  \longrightarrow  \mP\bigg[\frac{q^n-1}{q-1},k,\frac{q^n-q^{n-d}}{q-1}\bigg]_{q^m}$$
 is well-defined.
\end{lemma}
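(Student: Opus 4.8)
The plan is to unwind the definition of the minimum distance of a projective system and reduce everything to a single ``hyperplane mass'' computation. By Definition~\ref{def:projsystem}, the distance parameter $d'$ of the projective system $(L_\mU,\mm_\mU)$ is
$$
d' = \frac{q^n-1}{q-1} - \max\Bigg\{ \sum_{P\in\Lambda}\mm_\mU(P) \st \Lambda \text{ a hyperplane of } \PG(k-1,q^m)\Bigg\}.
$$
The length $\frac{q^n-1}{q-1}$ and the dimension $k$ are already fixed by \eqref{eq:length_multiplicity} and by the spanning condition $\langle\mU\rangle_{\Fm}=\Fm^k$ (which guarantees $L_\mU$ does not lie in a hyperplane, so $(L_\mU,\mm_\mU)$ is a genuine projective system of dimension $k$). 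Thus the entire content of the lemma is the evaluation of the maximal hyperplane mass, and my strategy is first to compute $\sum_{P\in\Lambda}\mm_\mU(P)$ in closed form for a fixed $\Lambda$, and then to optimize over $\Lambda$.

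For the key step, I would write $\Lambda=\PG(H,\Fm)$ for an $\Fm$-hyperplane $H\subseteq\Fm^k$ and rerun the double-counting argument of Lemma~\ref{lem:linear_set_multiplicity}, but restricted to the $1$-dimensional $\Fm$-subspaces $V$ contained in $H$. The point is that any such $V\subseteq H$ satisfies $\mU\cap V=(\mU\cap H)\cap V$, and every nonzero vector of $\mU\cap H$ lies in exactly one such $V$, so the restricted sum collapses to $|(\mU\cap H)\setminus\{0\}|$. This yields
$$
\sum_{P\in\Lambda}\mm_\mU(P) = \frac{1}{q-1}\sum_{V\subseteq H} \bigl(q^{\dim_{\Fq}(\mU\cap V)}-1\bigr) = \frac{q^{\wt_\mU(\Lambda)}-1}{q-1}, \qquad \wt_\mU(\Lambda)=\dim_{\Fq}(\mU\cap H).
$$
Equivalently, and perhaps more transparently, $\mm_\mU(P)$ counts the $\Fq$-points $\langle u\rangle_{\Fq}$ of $\mU$ that map to the $\Fm$-point $P$, so the hyperplane mass is simply the number of $\Fq$-points of the space $\mU\cap H$; either viewpoint gives the same formula.

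To conclude, I would use that $t\mapsto\frac{q^t-1}{q-1}$ is strictly increasing, so maximizing the mass over hyperplanes is the same as maximizing $\dim_{\Fq}(\mU\cap H)$ over $\Fm$-hyperplanes $H$. By the defining equation~\eqref{d=} of an $[n,k,d]_{q^m/q}$ system (equivalently Theorem~\ref{thm:genrankweight} at $r=1$), this maximum equals $n-d$. Substituting,
$$
d' = \frac{q^n-1}{q-1} - \frac{q^{n-d}-1}{q-1} = \frac{q^n-q^{n-d}}{q-1},
$$
which is exactly the asserted value, establishing that $\Ext^\HH$ lands in $\mP[\frac{q^n-1}{q-1},k,\frac{q^n-q^{n-d}}{q-1}]_{q^m}$. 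I expect the only genuinely delicate point to be the restricted double-count: one must check that summing the point multiplicities over the $\Fm$-points inside $H$ correctly collapses to $|(\mU\cap H)\setminus\{0\}|$, i.e.\ that the multiplicity function restricts cleanly to the subspace $H$. Everything after that formula is monotonicity together with the defining equation for $d$, so no further obstacle should arise.
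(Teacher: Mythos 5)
Your proposal is correct and follows essentially the same route as the paper's proof: the hyperplane mass is evaluated by the restricted double count using the partition of $H\setminus\{0\}$ into punctured one-dimensional $\Fm$-subspaces, giving $\sum_{P\in\PG(H,\Fm)}\mm_\mU(P)=\frac{q^{\dim_{\Fq}(H\cap\mU)}-1}{q-1}$, after which maximizing over hyperplanes and invoking the defining equation of an $[n,k,d]_{q^m/q}$ system yields $d'=\frac{q^n-q^{n-d}}{q-1}$. The only cosmetic difference is that you make the monotonicity of $t\mapsto\frac{q^t-1}{q-1}$ explicit, which the paper leaves implicit.
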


\begin{proof}
 The fact that the map $\Ext^\HH$ sends equivalence classes of $\Fmk$ systems in equivalence classes of projective $[\frac{q^n-1}{q-1},k]_q$ systems has already been observed above. We only need to show the compatibility between the third parameters. 
 More precisely, we need to show that for a given $[n,k,d]_{q^m/q}$ system $\mU$, every element in $\Ext^\HH([\mU])$ is a projective $[\frac{q^n-1}{q-1},k,\frac{q^n-q^{n-d}}{q-1}]_q$ system. Fix the projective $[\frac{q^n-1}{q-1},k,d']_q$ system $(L_\mU,\mm_\mU)$, and denote by $\Lambda_{k-1}$ the set of $\Fm$-hyperplanes of $\Fm^k$. Then for any $H\in \Lambda_{k-1}$ we have
  \begin{align*} \sum_{P \in \PG(H,\Fm)} \mm_\mU(P) 
 &=\sum_{P \in \PG(H,\Fm)} \frac{q^{\wt_{\mU}(P)}-1}{q-1} \\
 %&=\frac{1}{q-1}\sum_{\substack{V\le H\\ \dim_{\Fm}(V)=1}} (q^{\dim_{\Fq}(V\cap\mU)}-1) \\
 &=\frac{1}{q-1}\sum_{\substack{V\subseteq H\\ \dim_{\Fm}(V)=1}} |V\cap(\mU\setminus\{0\}|\\
 &=\frac{1}{q-1}|H\cup (\mU\setminus\{0\})|\\
 &=\frac{q^{\dim_{\Fq}(H\cap \mU)}-1}{q-1},
 \end{align*}
 where the second to last identity follows from the fact that  $\{V\setminus \{0\}: V\subseteq H, \dim_{\Fm}(V)=1\}$ is a partition of $H\setminus \{0\}$. Therefore we obtain
 \begin{equation*}
     d'=\frac{q^n-1}{q-1}-\max\bigg\{\frac{q^{\dim_{\Fq}(H\cap \mU)}-1}{q-1} \st H \in \Lambda_{k-1} \bigg\}=\frac{q^n-1}{q-1}-\frac{q^{n-d}-1}{q-1}=\frac{q^n-q^{n-d}}{q-1}. \qedhere
 \end{equation*} 
\end{proof}

\begin{definition}
 Let $\mC$ be a nondegenerate $[n,k,d]_{q^m/q}$ rank-metric code. 
 We will call any Hamming-metric code
 in $(\Psi^\HH\circ\Ext^\HH\circ\Phi)([\mC])$
 \textbf{associated} with $\mC$. Note that any such an object is a
  $[\frac{q^n-1}{q-1},k,\frac{q^n-q^{n-d}}{q-1}]_{q^m}$ code.
\end{definition}

The Hamming-metric code associated to $\mC$ in the previous definition
is clearly not unique. 
However, the choice of the code is irrelevant when focusing on properties that are invariant under monomial equivalence. Therefore, for ease of notation, in the sequel we denote by $\mC^\HH$ any code that belongs to $(\Psi^\HH\circ\Ext^\HH\circ\Phi)([\mC])$.

\begin{example}
Let $q=2$, $n=4$ and $m=3$. Consider $\F_{8}=\F_2[\alpha]$, where $\alpha^3+\alpha+1=0$.
Moreover, let $\C$ be the $[4,2,1]_{8/2}$ code whose generator matrix
$$G= \begin{pmatrix}
1 & 0 & 0 & 0 \\
0 & 1 & \alpha & \alpha^2
\end{pmatrix}.$$ 
Take the $[4,2,1]_{8/2}$ system $\mU$ spanned by the columns of $G$, i.e.,
$\mU=\{(a,\beta): a\in \F_2, \beta \in \F_8\}$. The weights of the points in $\PG(1,8)$ with respect to $\mU$ are given by
\begin{align*}\wt_{\mU}([1:a])&=1, \qquad \mbox{ for every } a \in \F_8\\
\wt_{\mU}([0:1])&=3.
\end{align*}
Hence, we obtain that $\Ext^\HH(\mU)=(\PG(1,8),\mm_\mU)$, where
\begin{align*}\mm_\mU([1:a])&=1, \qquad \mbox{ for every } a \in \F_8\\
\mm_\mU([0:1])&=7.
\end{align*}
At this point, any code $\mC^{\HH}=C\in(\Psi^\HH\circ\Ext^\HH\circ\Phi)([\mC])$ is monomially equivalent to  the $[15,2,8]_{8}$ (Hamming-metric) code whose generator matrix is
$$ G_{\Ext}=\left(\begin{array}{ccccccccccccccc} 1 & 1 & 1 & 1 & 1 & 1 & 1 & 1 & 0 & 0 & 0 & 0 & 0 & 0 & 0 \\
 0 & 1 &  \alpha &  \alpha^2 &  \alpha^3 & \alpha^4 & \alpha^5 & \alpha^6 & 1 & 1 & 1 & 1 & 1 & 1 & 1
\end{array}\right).$$
\end{example}

\begin{example}[Simplex Rank-Metric Code] Take $\C$ to be the $[km,k,m]_{q^m/q}$ simplex rank-metric code, whose corresponding $[km,k,m]_{q^m/q}$ system is $\Phi([\C])=[\Fm^k]$. Denote $\mU:=\Fm^k$ and consider the weight of each point $P\in\PG(k-1,q^m)$ in $L_{\mU}$. For $P=[v]$, we have
$$\wt_{\mU}(P)=\dim_{\Fq}(\mU\cap \langle v\rangle_{\Fm})=\dim_{\Fq}(\langle v\rangle_{\Fm})=m.$$
Therefore by applying the map $\Ext^{\HH}$ we obtain
$$\Ext^{\HH}([\mU])=[(L_{\mU},\mm_{\mU})],$$
where $L_{\mU}=\PG(k-1,q^m)$ and
$$\mm_{\mU}(P)=\frac{q^m-1}{q-1} \quad \mbox{for all } P \in \PG(k-1,q^m).$$
In particular, any code in $(\Psi^\HH\circ\Ext^\HH\circ\Phi)([\mC])$ is monomially equivalent to  the concatenation of $\frac{q^m-1}{q-1}$ copies of the  $[\frac{q^{km}-1}{q^m-1},k,q^{(k-1)m} ]_{q^m}$ simplex code in the Hamming metric.
\end{example}

Lemma \ref{lem:min_distance_associated_H} shows how the
fundamental parameters of a nondegenerate $[n,k,d]_{q^m/q}$ rank-metric code $\mC$ relate to 
those of an associated Hamming-metric code $\mC^\HH$. 
The connection can be made even more precise. For example, we can say how the weight distributions of the two codes relate to each other.

\begin{theorem}\label{thm:weightdistribution_H_R}
 Let $\mC$ be a nondegenerate $[n,k,d]_{q^m/q}$ rank-metric code with rank-weight distribution $\{A_i^\rk(\mC)\}_{i}$. Then the Hamming-weight distribution of $\C^\HH$ is $\{A_j^\HH(\mC^\HH)\}_j$ with
 $$ A_j^\HH(\mC^\HH)=\begin{cases} A_i^\rk(\mC) & \mbox{ if } j=\frac{q^n-q^{n-i}}{q-1},\\
 0 & \mbox{ otherwise. }\end{cases}$$
\end{theorem}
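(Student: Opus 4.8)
The plan is to set up a direct bijection between nonzero codewords of $\mC$ and nonzero codewords of $\mC^\HH$ that converts rank weight into Hamming weight via the formula $j = \frac{q^n - q^{n-i}}{q-1}$. The key conceptual tool is Lemma~\ref{lem:rkvG}, which relates the rank of a codeword $vG$ to the intersection dimension $\dim_{\Fq}(\mU \cap \langle v\rangle^\perp)$, together with the explicit description of $\mC^\HH$ via the map $\Ext^\HH$: its columns are indexed (with multiplicity $\mm_\mU$) by the points of $\PG(k-1,q^m)$, and a generator matrix for $\mC^\HH$ can be taken to have the same $k$ rows as a generator matrix $G$ for $\mC$ does in the $q$-system picture. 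Concretely, if $\mU$ is the $q$-system of $\mC$, then a generator matrix of $\mC^\HH$ has columns running over representatives of all $P \in L_\mU$, each repeated $\mm_\mU(P)$ times.

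First I would fix a generator matrix $G$ of $\mC$ and identify a codeword $vG$ (for nonzero $v \in \Fm^k$) with the corresponding codeword $v G_{\Ext}$ of $\mC^\HH$, where $G_{\Ext}$ is the generator matrix of $\mC^\HH$ described above. The Hamming weight of $v G_{\Ext}$ is $\frac{q^n-1}{q-1}$ minus the number of columns (counted with multiplicity) lying in the hyperplane $\langle v\rangle^\perp$. Each point $P \in \PG(k-1,q^m)$ contributes $\mm_\mU(P) = \frac{q^{\wt_\mU(P)}-1}{q-1}$ columns, and $P$ lies in $\PG(\langle v\rangle^\perp, \Fm)$ precisely when the corresponding $1$-dimensional $\Fm$-space is contained in $\langle v\rangle^\perp$. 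Summing $\mm_\mU(P)$ over all $P$ in this hyperplane is exactly the computation already carried out in the proof of Lemma~\ref{lem:min_distance_associated_H}: it equals $\frac{q^{\dim_{\Fq}(\mU \cap \langle v\rangle^\perp)}-1}{q-1}$.

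Next I would combine these facts. Writing $i := \rk(vG)$, Lemma~\ref{lem:rkvG} gives $\dim_{\Fq}(\mU \cap \langle v\rangle^\perp) = n - i$, so the Hamming weight of the associated codeword is
$$
\wH(v G_{\Ext}) = \frac{q^n-1}{q-1} - \frac{q^{n-i}-1}{q-1} = \frac{q^n - q^{n-i}}{q-1}.
$$
This is a strictly increasing function of $i$, hence injective on the possible rank values, which shows that codewords of rank $i$ map bijectively onto codewords of Hamming weight $\frac{q^n-q^{n-i}}{q-1}$ and that no codeword of $\mC^\HH$ has any other Hamming weight. Since the assignment $vG \mapsto v G_{\Ext}$ is an $\Fm$-linear bijection $\mC \to \mC^\HH$ (both are generated by the same row operations applied to $G$ and $G_{\Ext}$ respectively), the multiplicities match: $A_j^\HH(\mC^\HH) = A_i^\rk(\mC)$ when $j = \frac{q^n-q^{n-i}}{q-1}$, and $A_j^\HH(\mC^\HH)=0$ otherwise. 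The zero codeword corresponds to $i=0$, $j=0$ consistently.

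The main obstacle, and the only point requiring genuine care, is justifying that $vG \mapsto v G_{\Ext}$ is a well-defined linear isomorphism compatible with the $\Ext^\HH$ construction, rather than merely a weight-preserving correspondence of equivalence classes; one must check that passing through $\Phi$, $\Ext^\HH$ and $\Psi^\HH$ really does preserve the row-space structure so that the same vector $v$ labels corresponding codewords. Once this identification is in place, everything else reduces to the hyperplane-intersection count already established for Lemma~\ref{lem:min_distance_associated_H} and the single application of Lemma~\ref{lem:rkvG}. I do not expect any delicate estimates beyond the routine Gaussian-binomial arithmetic.
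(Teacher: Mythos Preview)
Your proposal is correct and follows essentially the same route as the paper: fix $G$ and $G_{\Ext}$, compute $\wH(vG_{\Ext})$ as $\frac{q^n-1}{q-1}$ minus the sum of $\mm_\mU(P)$ over $P$ in the hyperplane $\PG(\langle v\rangle^\perp,\Fm)$, invoke the computation from Lemma~\ref{lem:min_distance_associated_H} together with Lemma~\ref{lem:rkvG} to obtain $\wH(vG_{\Ext})=\frac{q^n-q^{n-\rk(vG)}}{q-1}$, and conclude. The paper's version is simply terser (it compresses your steps into ``doing the same computations as in Lemma~\ref{lem:min_distance_associated_H}''), and the well-definedness concern you flag is handled implicitly by choosing $G_{\Ext}$ with the specific columns coming from $(L_\mU,\mm_\mU)$, so that $v\mapsto vG_{\Ext}$ is automatically the required $\Fm$-linear isomorphism.
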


\begin{proof}
 Let $G$ be a generator matrix for $\C$ and denote by 
 $\mU$ the $\Fq$-span of its columns. Let~$G_{\Ext}$ be a generator matrix for $\C^{\HH}$ whose columns are the elements of the multiset $(L_{\mU},\mm_{\mU})$.
 Doing the same computations as in Lemma \ref{lem:min_distance_associated_H} we obtain that, 
 for every $u\in \Fm^k\setminus\{0\}$, 
 \begin{equation}\label{eq:weightcorresp}
 \wt^{\HH}(uG_{\Ext})=\frac{q^n-1}{q-1}-\sum_{P \in \PG(H_u,\Fm)} \frac{q^{\wt_{\mU}(P)}-1}{q-1}=\frac{q^n-q^{n-\rk(uG)}}{q-1},
 \end{equation}
 where $H_u:=\langle u\rangle^\perp$.
\end{proof}

\begin{remark} \label{rmk:MW}
 While the connection between the dual of a code $\mC$ and the dual of $\mC^\HH$ seems to be difficult to describe explicitly, we remark that their weight distributions (in the rank and Hamming metric, respectively) are linked via the theory of MacWilliams identities; see~\cite{macwilliams1977theory} for a general reference. 
 More precisely, the Hamming weight distribution of $(\mC^\HH)^\perp$ can be written in terms  of the Hamming weight distribution of $\mC^\HH$. By Theorem~\ref{thm:weightdistribution_H_R}, the latter can be written in terms of the rank weight distribution of $\mC$ which, in turn, can be expressed in terms of the rank weight distribution of~$\mC^\perp$. We do not go into the details of the computation.
\end{remark}

\begin{remark}
Theorem \ref{thm:weightdistribution_H_R} generalizes various known results on Hamming-metric codes obtained from linear sets. This is the case of the two-weight Hamming-metric codes arising from maximum scattered linear sets found by Blokhuis and Lavrauw in 
\cite[Section 5]{blokhuis2000scattered}, and of the  Hamming-metric codes with $h+1$ weights recently presented by Zini and Zullo in  \cite[Theorem~7.1]{zini2021scattered}.
\end{remark}

Finally, one can also prove the following result connecting the generalized weights of~$\mC$ and~$\mC^\HH$ (in the respective metrics). These code invariants can be found in Definition~\ref{def:rkw} and in the Appendix, respectively.
Since the argument is very similar to that in the proof of Theorem~\ref{thm:weightdistribution_H_R}, the details are omitted.

\begin{theorem}%[Correspondence of the Generalized Weights]
 Let $\mC$ be a nondegenerate $[n,k,d]_{q^m/q}$ rank-metric code with generalized rank-weights $\{ \drk_i(\mC)\}_{i}$. Then the generalized Hamming-weights of $\C^\HH$ are given by $\{\dH_i(\mC^\HH)\}_i$, where
 $$ \dH_i(\mC^\HH)=\frac{q^n-q^{n-\drk_i(\mC)}}{q-1}. $$
\end{theorem}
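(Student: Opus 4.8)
The plan is to mimic the proof of Theorem~\ref{thm:weightdistribution_H_R}, but at the level of subspaces rather than single vectors. The statement to prove is that the generalized Hamming weights of $\mC^\HH$ satisfy $\dH_i(\mC^\HH) = \frac{q^n - q^{n-\drk_i(\mC)}}{q-1}$. First I would recall the geometric description of generalized weights: by Theorem~\ref{thm:genrankweight}, the $i$-th generalized rank weight of $\mC$ is
$$\drk_i(\mC) = n - \max\left\{\dim_{\Fq}(\mU \cap H) \st H \mbox{ is an } \Fm\mbox{-subspace of codim. } i \mbox{ of } \Fm^k\right\},$$
where $\mU$ is a $q$-system associated to $\mC$. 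The analogous geometric formula for generalized Hamming weights of a projective system (stated in the Appendix) expresses $\dH_i$ of the associated Hamming code as the total multiplicity minus the maximal multiplicity carried by an $\Fm$-subspace of codimension $i$.

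The key computation is to transport the weight function $\wt_\mU$ through the multiplicity function $\mm_\mU(P) = \frac{q^{\wt_\mU(P)}-1}{q-1}$. Concretely, for an $\Fm$-subspace $H \subseteq \Fm^k$ of codimension $i$, I would compute the total multiplicity of the points of $L_\mU$ lying in the projective subspace $\PG(H,\Fm)$. The same partition argument used in Lemma~\ref{lem:min_distance_associated_H} — namely that the nonzero $1$-dimensional $\Fm$-subspaces of $H$ partition $H \setminus \{0\}$, combined with the Standard Equations (Lemma~\ref{lem:st-eq}) applied inside $H$ — yields
$$\sum_{P \in \PG(H,\Fm)} \mm_\mU(P) = \frac{q^{\dim_{\Fq}(\mU \cap H)}-1}{q-1}.$$
This is the crux: it converts the additive combinatorial quantity (sum of multiplicities) into the exponential of $\dim_{\Fq}(\mU \cap H)$, exactly as in the minimum-distance case but now for arbitrary codimension $i$.

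From here the proof is purely formal. Since $x \mapsto \frac{q^x-1}{q-1}$ is strictly increasing, maximizing $\sum_{P \in \PG(H,\Fm)} \mm_\mU(P)$ over codimension-$i$ subspaces $H$ is equivalent to maximizing $\dim_{\Fq}(\mU \cap H)$, and the latter maximum equals $n - \drk_i(\mC)$ by Theorem~\ref{thm:genrankweight}. Plugging the maximal multiplicity $\frac{q^{n-\drk_i(\mC)}-1}{q-1}$ into the geometric formula for $\dH_i(\mC^\HH)$, with total length $\frac{q^n-1}{q-1}$, gives
$$\dH_i(\mC^\HH) = \frac{q^n-1}{q-1} - \frac{q^{n-\drk_i(\mC)}-1}{q-1} = \frac{q^n - q^{n-\drk_i(\mC)}}{q-1},$$
as desired. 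I do not anticipate a genuine obstacle here: the whole argument is an almost verbatim lift of Theorem~\ref{thm:weightdistribution_H_R} from the hyperplane case ($i=1$) to general codimension, which is precisely why the authors say the details are omitted. The only point requiring mild care is ensuring that the generalized Hamming weight of the associated projective system really is governed by codimension-$i$ subspaces of the ambient $\Fm^k$ (as opposed to subspaces of some intrinsic projective space of the code), but this is guaranteed by the correspondence $\Psi^\HH$ together with the definition of generalized Hamming weights via the geometric formula recalled in the Appendix.
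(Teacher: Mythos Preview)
Your proposal is correct and is exactly the argument the paper has in mind: the authors explicitly state that ``the argument is very similar to that in the proof of Theorem~\ref{thm:weightdistribution_H_R}, the details are omitted,'' and you have supplied precisely those details by lifting the hyperplane computation of Lemma~\ref{lem:min_distance_associated_H} to arbitrary codimension and combining it with the geometric formula for generalized weights from the Appendix.
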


\subsection{The Total Weight of a Rank-Metric Code}

In this subsection we continue comparing codes in the rank and in the Hamming metric. Our focus is 
on the rank-metric analogue of a fundamental parameter of a Hamming-metric code, namely, its \textit{total weight}. It is well-known that the latter only depends on the field size and on the code's dimension and effective length. More precisely, if $\mC \subseteq \F_{q}^n$ is a Hamming-nondegenerate code, then
\begin{equation} \label{totH}
    \sum_{v \in \mC} \wt^{\HH}(v) = n(q^k-q^{k-1}).
\end{equation}
This simple result, which has numerous applications in classical coding theory (for example, a simple proof of the Plotkin bound for linear codes), does not have an immediate analogue in the rank metric. Indeed, it is easy to find examples of rank-nondegenerate codes having the same parameters $(q,m,n,k)$ but for which the quantity
$\sum_{v \in \mC} \rk(v)$ is not a  constant.

In this section, we argue that, in the ``total weight''  context, a convenient analogue of $\wt^{\HH}(v)$ is $q^{n-\rk(v)}$. We start by recalling the following $q$-analogue of the Pless identities; see~\cite{huffman2010fundamentals}.

\begin{notation}
For a prime power $q$ and integers $n,m,k,j,r$, let
 $$f_q(n,m,k,j,r):= \sum_{\nu=j}^r q^{m(k-\nu)} \qbin{n-j}{\nu-j}{q} \qbin{r}{\nu}{q} \,  \prod_{\ell=0}^{\nu-1} (q^\nu-q^\ell).$$
\end{notation}

\begin{theorem}[Theorem 30 of \cite{de2018weight}] \label{PPP}
 Let $\mC$ be an $[n,k,d]_{q^m/q}$ code. Then for all $0 \le r \le n$ we have
 $$\sum_{v \in \mC} q^{r(n-\rk(v))} = \sum_{j=0}^r A_j(\mC^\perp) \, f_q(n,m,k,j,r).$$
\end{theorem}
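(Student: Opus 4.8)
The plan is to prove the identity by a double-counting argument, interpreting the left-hand side as counting pairs $(v,X)$ with $v\in\mC$ and $X\in\Fq^{r\times n}$ satisfying $Xv^\top=0$. The starting observation is that, fixing any $\Fm/\Fq$-basis $\Gamma$, for a single row $x\in\Fq^n$ one has $xv^\top=\sum_j(x\Gamma(v))_j\gamma_j$, so that $xv^\top=0$ if and only if $x\Gamma(v)=0$; since $\Gamma(v)$ has $\Fq$-rank $\rk(v)$, the space of such $x$ has dimension $n-\rk(v)$. Applying this to each of the $r$ rows of $X$ yields
\[ q^{r(n-\rk(v))}=|\{X\in\Fq^{r\times n}\st Xv^\top=0\}|, \]
and hence, after exchanging the order of summation,
\[ \sum_{v\in\mC}q^{r(n-\rk(v))}=\sum_{X\in\Fq^{r\times n}}|\{v\in\mC\st Xv^\top=0\}|. \]

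Next I would organize the outer sum according to the row space $R:=\rs(X)\subseteq\Fq^n$, noting that the inner count depends on $X$ only through $R$. For a fixed $R$ of dimension $s$, the condition $Xv^\top=0$ is equivalent to $v\in\langle R\rangle_{\Fm}^\perp$, so the inner count equals $|\mC\cap\langle R\rangle_{\Fm}^\perp|$; using $(\mC\cap\langle R\rangle_{\Fm}^\perp)^\perp=\mC^\perp+\langle R\rangle_{\Fm}$ together with the dimension formula one finds $\dim_{\Fm}(\mC\cap\langle R\rangle_{\Fm}^\perp)=(k-s)+\dim_{\Fm}(\mC^\perp\cap\langle R\rangle_{\Fm})$, whence the inner count is $q^{m(k-s)}|\mC^\perp\cap\langle R\rangle_{\Fm}|$. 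Counting the number of $X$ with a prescribed $s$-dimensional row space as the number of surjections $\Fq^r\to\Fq^s$, namely $\qbin{r}{s}{q}\prod_{\ell=0}^{s-1}(q^s-q^\ell)$, reduces the problem to evaluating $\Sigma_s:=\sum_{\dim R=s}|\mC^\perp\cap\langle R\rangle_{\Fm}|$.

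The crux is a clean description of $\Sigma_s$ in terms of the rank-weight distribution of $\mC^\perp$. I would establish the key lemma that, for $w\in\Fm^n$ and an $\Fq$-subspace $R\subseteq\Fq^n$, one has $w\in\langle R\rangle_{\Fm}$ if and only if $\sigma^\rk(w)\subseteq R$; this is immediate from expanding $w$ over $\Gamma$, since the columns of $\Gamma(w)$ are exactly an $\Fq$-generating set of $\sigma^\rk(w)$. Writing $|\mC^\perp\cap\langle R\rangle_{\Fm}|$ as $|\{w\in\mC^\perp\st w\in\langle R\rangle_{\Fm}\}|$ and summing over $R$ by first fixing $w$ then gives
\[ \Sigma_s=\sum_{w\in\mC^\perp}|\{R\st\dim R=s,\ \sigma^\rk(w)\subseteq R\}|=\sum_{j=0}^{s}A_j(\mC^\perp)\qbin{n-j}{s-j}{q}, \]
where I used that the number of $s$-dimensional subspaces of $\Fq^n$ containing a fixed $j$-dimensional subspace is $\qbin{n-j}{s-j}{q}$.

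Finally, I would assemble the pieces, substituting $\Sigma_s$ back, swapping the order of the $s$- and $j$-summations, and relabelling $s$ as $\nu$. Invoking the standard identity $\qbin{r}{\nu}{q}\prod_{\ell=0}^{\nu-1}(q^\nu-q^\ell)=\prod_{\ell=0}^{\nu-1}(q^r-q^\ell)$ if convenient, the inner sum over $\nu$ is recognized verbatim as $f_q(n,m,k,j,r)$, which produces the claimed formula. The only genuinely delicate points are the support-containment lemma and the bookkeeping in exchanging the summations; the dimension count and the enumeration of matrices with a given row space are routine linear algebra over $\Fq$, and I expect the identification of the reorganized triple sum with $f_q$ to be the main step requiring care.
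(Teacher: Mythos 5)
Your proof is correct, and there is in fact no internal proof to compare it with: the paper does not prove Theorem~\ref{PPP} at all, but imports it wholesale as Theorem~30 of \cite{de2018weight}. Your argument is therefore a genuine addition: a self-contained double count, where identities of this type (the classical Pless power moments and their $q$-analogues) are usually derived formally from MacWilliams-type duality. The three pillars of your argument all hold: (i) $q^{r(n-\rk(v))}=|\{X\in\Fq^{r\times n}\st Xv^\top=0\}|$, since for a single row $xv^\top=0$ if and only if $x\Gamma(v)=0$, and $\Gamma(v)$ has $\Fq$-rank $\rk(v)$; (ii) for a fixed row space $R$ of dimension $s$, the inner count equals $|\mC\cap\langle R\rangle_{\Fm}^\perp|=q^{m(k-s)}\,|\mC^\perp\cap\langle R\rangle_{\Fm}|$, which follows from $(\mC\cap\langle R\rangle_{\Fm}^\perp)^\perp=\mC^\perp+\langle R\rangle_{\Fm}$ together with the routine (but worth stating explicitly) fact that an $\Fq$-basis of $R$ stays linearly independent over $\Fm$, so that $\dim_{\Fm}\langle R\rangle_{\Fm}=s$; (iii) the support lemma $w\in\langle R\rangle_{\Fm}$ if and only if $\sigma^\rk(w)\subseteq R$ is exactly right, because the columns of $\Gamma(w)$ span $\sigma^\rk(w)$, and conversely expanding the $\Fm$-coefficients of any representation of $w$ over elements of $R$ in the basis $\Gamma$ exhibits each column of $\Gamma(w)$ as an $\Fq$-combination of elements of $R$. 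With the count $\qbin{r}{s}{q}\prod_{\ell=0}^{s-1}(q^s-q^\ell)$ of matrices in $\Fq^{r\times n}$ having a prescribed $s$-dimensional row space, your triple sum reorganizes term by term into $\sum_{j=0}^r A_j(\mC^\perp)\,f_q(n,m,k,j,r)$, matching the definition of $f_q$ exactly. What your route buys is transparency: it makes visible why the dual weight distribution enters (a row space $R$ meets $\langle\,\cdot\,\rangle_{\Fm}$-span membership for $\mC^\perp$ exactly through the dual codewords whose rank support lies in $R$), whereas the transform-based proofs obtain the identity as a formal consequence of duality without this combinatorial interpretation.
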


In analogy with Remark~\ref{rmk:MW}, we observe that a different statement of Pless-type identities can in principle be obtained by combining the correspondence $\mC \to \mC^\HH$ with the classical Pless identities for Hamming-metric codes. For the purposes of this section, Theorem~\ref{PPP} is what we will need.

In this paper, we are not only interested in the $q$-analogue of the total weight of a code, but also in other related quantities. In order to unify their treatment, it is convenient to regard the Hamming/rank weight of the nonzero elements of a code as a discrete random variable, which we simply denote by  $\mC^*$, 
$\EE^\rk$ and
$\Var^\rk$ for the mean and variance of  (a function of) $\mC^*$,  viewed as a random variable in the sense explained above. 

In the sequel, we call a code $\mC \subseteq \F_{q^m}^n$ \textbf{rank-2-nondegenerate} if $\drk(\mC^\perp) \ge 3$. Codes with this property are the rank-metric analogues of projective codes in the Hamming metric; see page~\pageref{pagedefproj}.
The previous theorem has the following simple consequences.

\begin{corollary}
\label{kor}
 Let $\mC$ be an $[n,k,d]_{q^m/q}$ code. If $\mC$ is rank-nondegenerate, then
 \begin{align*}
%  \EE^\rk[q^{n-\mC}] &=  1+ q^{-m} (q^n-1), \\
 \EE^\rk[q^{n-\mC^*}] &=  \frac{-q^n + q^{mk} + q^{m(k-1)}(q^n-1)}{q^{mk}-1}, \\
%   \Var^\rk[q^{n-\mC}] &\ge  q^{-mk}f_q(n,m,k,0,2) -  \EE^\rk[q^{n-\mC}]^2, \\
  \Var^\rk[q^{n-\mC^*}] &\ge
  \frac{-q^{2n}+f_q(n,m,k,0,2)}{q^{mk}-1} - \EE^\rk[q^{n-\mC^*}]^2,
  \end{align*}
   where the latter  lower bound is attained with equality if and only if $\mC$ is rank-2-nondegenerate.
\end{corollary}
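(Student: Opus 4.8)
The plan is to read off both quantities directly from the Pless-type identity of Theorem~\ref{PPP}, specialized to $r=1$ and $r=2$, after isolating the contribution of the zero codeword. Recall that $\mC^*$ is the rank weight of a uniformly chosen nonzero codeword, so that $\EE^\rk[q^{n-\mC^*}]=(q^{mk}-1)^{-1}\sum_{v\in\mC\setminus\{0\}}q^{n-\rk(v)}$, and similarly for $q^{2(n-\mC^*)}$.

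For the expectation I would apply Theorem~\ref{PPP} with $r=1$. The left-hand side $\sum_{v\in\mC}q^{n-\rk(v)}$ contains the term $q^n$ coming from $v=0$, which I subtract to recover the sum over nonzero codewords. On the right-hand side only $j=0,1$ occur: here $A_0(\mC^\perp)=1$ trivially, while $A_1(\mC^\perp)=0$ because rank-nondegeneracy is equivalent to $\drk(\mC^\perp)\ge 2$ by Proposition~\ref{prop:newnondegeneracy}, so $\mC^\perp$ has no codeword of rank $1$. A direct evaluation of the two summands gives $f_q(n,m,k,0,1)=q^{mk}+q^{m(k-1)}(q^n-1)$, and dividing $f_q(n,m,k,0,1)-q^n$ by $q^{mk}-1$ yields exactly the claimed formula for $\EE^\rk[q^{n-\mC^*}]$.

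For the variance I would use $\Var^\rk[q^{n-\mC^*}]=\EE^\rk[q^{2(n-\mC^*)}]-\EE^\rk[q^{n-\mC^*}]^2$ and compute the second moment via Theorem~\ref{PPP} with $r=2$. Subtracting the $v=0$ contribution $q^{2n}$ and again using $A_0(\mC^\perp)=1$ and $A_1(\mC^\perp)=0$, the nonzero-codeword sum equals $f_q(n,m,k,0,2)+A_2(\mC^\perp)\,f_q(n,m,k,2,2)-q^{2n}$. Since $A_2(\mC^\perp)\ge 0$ and a one-line evaluation shows $f_q(n,m,k,2,2)=q^{m(k-2)}(q^2-1)(q^2-q)>0$, dropping the middle term and dividing by $q^{mk}-1$ gives the stated lower bound for the variance.

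Finally, equality holds precisely when $A_2(\mC^\perp)\,f_q(n,m,k,2,2)=0$, i.e.\ when $A_2(\mC^\perp)=0$; combined with $A_1(\mC^\perp)=0$ this says $\mC^\perp$ has no codeword of rank at most $2$, that is $\drk(\mC^\perp)\ge 3$, which is the definition of rank-2-nondegeneracy. I expect the only mildly delicate points to be the correct bookkeeping of the $v=0$ term in each identity and the explicit evaluation of the relevant values of $f_q$; everything else is immediate from Theorem~\ref{PPP} and Proposition~\ref{prop:newnondegeneracy}.
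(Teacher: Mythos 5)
Your proposal is correct and follows exactly the route the paper intends: the corollary is presented there as an immediate consequence of Theorem~\ref{PPP}, and your derivation (specializing to $r=1,2$, removing the $v=0$ term, using $A_0(\mC^\perp)=1$ and $A_1(\mC^\perp)=0$ via Proposition~\ref{prop:newnondegeneracy}, and dropping the nonnegative term $A_2(\mC^\perp)\,f_q(n,m,k,2,2)$ with equality precisely when $\drk(\mC^\perp)\ge 3$) supplies precisely the details the paper leaves implicit. Your evaluations $f_q(n,m,k,0,1)=q^{mk}+q^{m(k-1)}(q^n-1)$ and $f_q(n,m,k,2,2)=q^{m(k-2)}(q^2-1)(q^2-q)>0$ are also correct.
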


Corollary~\ref{kor} establishes the rank-metric analogue of the formula for the total weight~of a Hamming-metric code in~\eqref{totH}.
It also shows that, for a rank-2-nondegenerate code $\mC$, the variance of the random variable $q^{n-\mC^*}$ only depends on a few code's parameters. While the formulas in Corollary~\ref{kor} are quite involved and not immediate to interpret, their asymptotics as $q \to +\infty$ can be explicitly computed. The estimates describe how the variance behaves over large fields.

\begin{proposition}
 Let $\mC$ be an $[n,k,d]_{q^m/q}$ code. If $\mC$ is rank-nondegenerate then $n \le km$ and, as $q \to +\infty$,
 $$\EE^\rk[q^{n-\mC^*}] \sim \begin{cases}
1 & \mbox{if $n \le m-1$,}\\
q^{n-m} & \mbox{if $m+1 \le n \le km$,} \\
2 & \mbox{if $n=m$}.
\end{cases}$$
If in addition $\mC$ is rank-2-nondegenerate, then $n \le mk/2$ and for $k \ge 3$ and $q \to +\infty$ we have
$$
\Var^\rk[q^{n-\mC^*}] \sim \begin{cases}
q^{-m+n+1} & \mbox{if $k \le n \le m-2$ or $m+2 \le n \le mk/2$,} \\
1  & \mbox{if $n=m-1$,} \\
q   & \mbox{if $n=m$,} \\
q^2 & \mbox{if $n=m+1$.}
\end{cases}
$$
\end{proposition}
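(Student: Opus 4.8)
The plan is to extract both asymptotics directly from the exact expressions in Corollary~\ref{kor}, analysing each range of $n$ relative to $m$ separately; the constraint $n\le km$ in the rank-nondegenerate case is precisely Proposition~\ref{prop:nleqkm}. For the expectation I would write $\EE^\rk[q^{n-\mC^*}]=N/(q^{mk}-1)$ with numerator $N=q^{mk}+q^{m(k-1)+n}-q^{m(k-1)}-q^n$. Since the denominator is $\sim q^{mk}$, the behaviour is governed by the two leading monomials $q^{mk}$ and $q^{m(k-1)+n}$ of $N$, whose exponents differ by $n-m$: for $n\le m-1$ the first dominates and the ratio tends to $1$; for $n\ge m+1$ the second dominates and the ratio is $\sim q^{n-m}$; for $n=m$ the two coincide, add up to $2q^{mk}$, and the limit is $2$.

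For the variance I would first establish the bound $n\le mk/2$ by showing that rank-$2$-nondegeneracy is equivalent to the associated $q$-system $\mU$ being scattered. Taking the columns $g_1,\dots,g_n$ of a generator matrix as an $\Fq$-basis of $\mU$, the relation defining a codeword of $\mC^\perp$ of rank at most $2$ translates into a pair of $\Fm$-proportional elements of $\mU$ whose ratio lies outside $\Fq$ (rank-one codewords being already excluded by nondegeneracy); such a pair exists exactly when some point $P\in\PG(k-1,q^m)$ satisfies $\wt_\mU(P)\ge 2$. Hence $\drk(\mC^\perp)\ge 3$ if and only if $\wt_\mU(P)\le 1$ for every $P$, i.e.\ if and only if $L_\mU$ is scattered, and the classical bound on the rank of a scattered linear set \cite{blokhuis2000scattered} yields $n\le mk/2$. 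Under this hypothesis Corollary~\ref{kor} holds with equality, so $\Var^\rk[q^{n-\mC^*}]$ is the second moment $(f_q(n,m,k,0,2)-q^{2n})/(q^{mk}-1)$ minus $(\EE^\rk[q^{n-\mC^*}])^2$.

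The computational core is then to expand $f_q(n,m,k,0,2)=q^{mk}+q^{m(k-1)}(q^n-1)(q+1)+q^{m(k-2)+1}(q^n-1)(q^{n-1}-1)$ into an explicit sum of monomials in $q$ and, after subtracting $q^{2n}$, to compare their exponents. The crucial feature is that the leading terms of the second moment and of $(\EE^\rk[q^{n-\mC^*}])^2$ coincide and cancel (both $\sim q^{2(n-m)}$ in the generic ranges, both $\sim 1$ when $n<m$), so one must expand to the next order. Sorting the exponents in the ranges $k\le n\le m-2$ and $m+2\le n\le mk/2$, and using $k\ge 3$, the largest surviving monomial is the one of order $m(k-1)+n+1$, which produces $\Var^\rk[q^{n-\mC^*}]\sim q^{n-m+1}$.

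The three boundary values $n\in\{m-1,m,m+1\}$ require separate treatment precisely because there the exponent $m(k-1)+n+1$ coincides with, or overtakes, other exponents: at $n=m-1$ it merges with $q^{mk}$, so the second moment tends to $2$ while $(\EE^\rk[q^{n-\mC^*}])^2\to 1$, giving $\Var^\rk[q^{n-\mC^*}]\sim 1$; at $n=m$ it becomes the unique top term $q^{mk+1}$, so the second moment $\sim q$ overwhelms $(\EE^\rk[q^{n-\mC^*}])^2\to 4$, giving $\sim q$; at $n=m+1$ it merges with $q^{m(k-2)+2n}$ to make the second moment $\sim 2q^2$ against $(\EE^\rk[q^{n-\mC^*}])^2\sim q^2$, giving $\sim q^2$. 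The main obstacle throughout is exactly this leading-order cancellation, which makes the result sensitive to lower-order monomials and forces both the careful bookkeeping of exponents and the separate boundary analysis.
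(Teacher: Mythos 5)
Your proposal is correct, and its computational core coincides with the paper's own proof: both read the asymptotics directly off Corollary~\ref{kor}, expand
$f_q(n,m,k,0,2)=q^{mk}+q^{m(k-1)}(q^n-1)(q+1)+q^{m(k-2)+1}(q^n-1)(q^{n-1}-1)$,
exploit the cancellation of the leading term of the second moment against $\EE^\rk[q^{n-\mC^*}]^2$, and isolate the boundary cases $n\in\{m-1,m,m+1\}$ exactly as you describe. The genuine divergence is your derivation of the bound $n\le mk/2$. The paper obtains it in one line by applying the rank-metric Singleton bound to $\mC^\perp$, namely $m(n-k)\le n(m-\drk(\mC^\perp)+1)\le n(m-2)$. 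You instead characterize rank-$2$-nondegeneracy geometrically, claiming $\drk(\mC^\perp)\ge 3$ if and only if the associated linear set $L_\mU$ is scattered, and then invoke the Blokhuis--Lavrauw bound~\eqref{eq:upper_bound_scattered}. Your characterization is correct and completable as sketched: if $g_1,\dots,g_n$ is the $\Fq$-basis of $\mU$ formed by the columns of a generator matrix, then a codeword $v\in\mC^\perp$ of rank $2$, written entrywise as $v_i=a_i\alpha+b_i\beta$ with $a_i,b_i\in\Fq$ and $\alpha,\beta$ linearly independent over $\Fq$, gives $\alpha x+\beta y=0$ for $x=\sum_i a_ig_i$ and $y=\sum_i b_ig_i$ in $\mU$, both nonzero because the $g_i$ are $\Fq$-independent; hence $x=\lambda y$ with $\lambda=-\beta/\alpha\notin\Fq$, so $\wt_{\mU}(\langle y\rangle_{\Fm})\ge 2$, and the converse runs the same computation backwards, rank-one codewords being excluded by Proposition~\ref{prop:newnondegeneracy}. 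Comparing the two routes: the paper's Singleton argument is shorter and entirely self-contained, while yours imports an external geometric theorem; on the other hand, yours proves strictly more, namely that under the correspondence of Theorem~\ref{thm:1-1} the rank-$2$-nondegenerate codes are exactly those whose $q$-system is scattered --- a clean statement the paper never makes explicit, and one that ties this proposition to the scattered-linear-set machinery the paper only deploys later, in Section~\ref{sec:6}.
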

\begin{proof} The first part of the statement easily follows from Proposition~\ref{prop:nleqkm} and Corollary~\ref{kor}. To prove the second part, 
we start by applying the rank-metric Singleton bound~\cite{delsarte1978bilinear,gabidulin1985theory} to $\mC^\perp$, obtaining $m(n-k) \le n(m-\drk(\mC^\perp)+1)
\le n(m-2)$. This implies $n \le mk/2$, as desired.

We now turn to the asymptotic estimates.
To simplify the notation, write 
 $f_q$ instead of $f_q(n,m,k,0,2)$. Lengthy computations show that  
$$f_q=q^{mk}+q^{m(k-1)}(q^n-1)(q+1) + q^{m(k-2)+1}(q^n-1)(q^{n-1}-1).$$
Therefore 
$$f_q(n,m,k,0,2) \sim \begin{cases}
q^{mk} & \mbox{if $n \le m-2$,} \\
q^{mk+1} & \mbox{if $n = m$,}\\
q^{mk+2n-2m} & \mbox{if $n \ge m+2$,} \\
2q^{mk} & \mbox{if $n=m-1$,} \\
2q^{mk+2} & \mbox{if $n=m+1$.}
\end{cases}$$
From the first part of the statement we also have
$$\EE^\rk[q^{n-\mC^*}]^2 \sim \begin{cases}
1 & \mbox{if $n \le m-1$,}\\
q^{2n-2m} & \mbox{if $n \ge m+1$,} \\
4 & \mbox{if $n=m$}.
\end{cases}$$
Using $k \ge 3$ (needed in the case $n=m+1$), this easily gives the asymptotics of
$$\Var^\rk[q^{n-\mC^*}] = \frac{-q^{2n}+f_q(n,m,k,0,2)}{q^{mk}-1} - \EE^\rk[q^{n-\mC^*}]^2$$
for $n=m-1$, $n=m$, and $n=m+1$. To compute the asymptotics in the other cases, write
$$\frac{-q^{2n}+f_q(n,m,k,0,2)}{q^{mk}-1} - \EE^\rk[q^{n-\mC^*}]^2
= \frac{A_q - B_q}{(q^{mk}-1)^2},$$
where $A_q=(q^{mk}-1)(-q^{2n}+f_q)$ and 
$B_q=(-q^n+q^{mk}+q^{m(k-1)}(q^n-1))^2$. 

If $n \le m-2$ then $f_q \sim q^{mk}+q^{m(k-1)+n+1}$. Therefore $A_q \sim q^{2mk}+q^{m(2k-1)+n+1}$ and 
$B_q \sim -q^{2mk}+2q^{m(2k-1)+n}$, from which the desired asymptotic estimate follows.

If $m+2 \le n \le mk/2$, then $m+2 \le n \le m(k-1)$, because $k \ge 3$. We then have $f_q \sim q^{m(k-2)+2n} + q^{m(k-1)+n+1}$ and thus
$A_q \sim q^{m(2k-2)+2n} + q^{m(2k-1)+n+1}$, $B_q \sim q^{2m(k-1)+2n} + 2q^{m(2k-1)+n}$. This again implies the desired asymptotic estimate.

\end{proof}

\bigskip

%%%%%%%%%%%%%%%%%%%%%%%

\section{Minimal Rank-Metric Codes: Geometry and Properties}
\label{sec:5}

The next two sections of this paper are devoted to the theory of minimal codes in the rank metric.
In this first section we propose a definition of minimal and establish a 1-1 correspondence between $\Fmk$ minimal rank-metric codes and $\Fmk$ systems. This allows us to investigate the main properties of this new family of codes.

\begin{definition}\label{def:mincodes}
Let $\C$ be an $\Fmk$ code. A codeword $v\in\mC$ is a \textbf{minimal codeword} if, for every $v'\in \mC$, $\sigma^\rk(v') \subseteq \sigma^\rk(v)$  implies $v'=\alpha v$ for some $\alpha \in \F_{q^m}$.
We say that~$\mC$ is \textbf{minimal} if all its codewords are minimal.
\end{definition}

\begin{lemma}\label{lem:supp}
 Let $v \in \Fm^n$. The following hold.
 \begin{enumerate}
     \item There exists $A \in \GL_n(q)$ such that $\sigma^\rk(vA)=\langle e_i \st i \in \sH(vA) \rangle$.
     \item Let $I\subseteq \{1,\dots,n\}$. Then,  $\sigma^\rk(v)\subseteq \langle e_i \st i \in I \rangle$ if and only if $I \supseteq \sH(v)$. In particular, $$\sH(v)=\arg\min\{|I| \st \sigma^\rk(v)\subseteq \mE_I \},$$ where $\mE_I:=\langle e_i \st i \in I\rangle$ and $\sigma^\rk(v) \subseteq \langle e_i \st i \in \sH(v) \rangle.$
 \end{enumerate}
\end{lemma}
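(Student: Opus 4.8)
The plan is to reduce both statements to elementary facts about the matrix $\Gamma(v) \in \F_q^{n \times m}$ associated to $v$ through a fixed basis $\Gamma$ of $\Fm/\F_q$. Two observations drive the whole argument: by definition the rank support $\sigma^\rk(v)$ is the column space of $\Gamma(v)$, whereas the $i$-th row of $\Gamma(v)$ vanishes exactly when $v_i = 0$, so that $\sH(v)$ is precisely the set of indices of the nonzero rows of $\Gamma(v)$.

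I would establish (2) first, as it is the conceptual core and will be reused in (1). For the direction $I \supseteq \sH(v) \Rightarrow \sigma^\rk(v) \subseteq \mE_I$: every index $i \notin I$ lies outside $\sH(v)$, so the $i$-th row of $\Gamma(v)$ is zero; hence each column of $\Gamma(v)$ is supported on $I$, and therefore so is the whole column space $\sigma^\rk(v)$. Conversely, if there were some $i \in \sH(v) \setminus I$, the nonzero $i$-th row of $\Gamma(v)$ would force some column---an element of $\sigma^\rk(v)$---to have a nonzero entry in position $i \notin I$, contradicting $\sigma^\rk(v) \subseteq \mE_I$. The ``in particular'' clause then follows at once: the index sets $I$ with $\sigma^\rk(v) \subseteq \mE_I$ are exactly those containing $\sH(v)$, so $\sH(v)$ is the smallest such set (uniquely minimal under inclusion, hence of minimum cardinality), and the choice $I = \sH(v)$ gives $\sigma^\rk(v) \subseteq \langle e_i \st i \in \sH(v)\rangle$.

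For (1), I would invoke Proposition~\ref{prop:prel}(3), which gives $\Gamma(vA) = A^\top \Gamma(v)$ and hence $\sigma^\rk(vA) = A^\top\, \sigma^\rk(v)$ for every $A \in \GL_n(q)$. Setting $r := \rk(v) = \dim_{\F_q} \sigma^\rk(v)$ and using that $A^\top$ ranges over all of $\GL_n(q)$, I choose $A$ so that $A^\top$ carries the $r$-dimensional space $\sigma^\rk(v)$ onto $\langle e_1, \dots, e_r\rangle$; then $\sigma^\rk(vA) = \langle e_1, \dots, e_r\rangle$. The one genuinely delicate point is to upgrade the inclusion $\sH(vA) \subseteq \{1, \dots, r\}$ (which follows from (2)) to an equality. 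I would do this by a dimension count: on one hand $\wH(vA) = |\sH(vA)| \le r$; on the other hand $\rk(vA) = \dim \sigma^\rk(vA) = r$, and since the rank of $\Gamma(vA)$ cannot exceed its number of nonzero rows we have $\rk(vA) \le \wH(vA)$. Combining these forces $\wH(vA) = r$, so $\sH(vA) = \{1, \dots, r\}$ and $\sigma^\rk(vA) = \langle e_i \st i \in \sH(vA)\rangle$, as required. This forced matching of the rank and Hamming supports, coming from the equality case of $\rk \le \wH$, is the only subtle step; everything else is bookkeeping about the nonzero rows of $\Gamma(v)$.
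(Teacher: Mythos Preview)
Your proof is correct and, for part~(2), essentially identical to the paper's. For part~(1) there is a small tactical difference worth noting: the paper puts $\Gamma(vA)$ directly into Smith normal form (by acting with $A^\top$ on the left and a change of basis $\Gamma$ on the right), which forces both $\sigma^\rk(vA)$ and $\sH(vA)$ to equal $\{1,\dots,r\}$ in one stroke. You instead fix only the column space via the $\GL_n(q)$-action and then recover $\sH(vA)=\{1,\dots,r\}$ from the inequality $\rk\le\wH$ combined with part~(2). Your route avoids the (harmless but unnecessary) basis change and makes explicit why the Hamming support cannot be strictly smaller than the rank support; the paper's route is slightly shorter. Either way the content is the same.
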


\begin{proof}
\begin{enumerate}
     \item Let  $r=\dim(\sigma^\rk(v))$. By Proposition \ref{prop:prel}, there exist a matrix $A$  and a basis $\Gamma$ of $\Fm/\Fq$, such that $\Gamma(vA)$ is in Smith normal form. Hence, $\sH(vA)=\{1,\ldots,r\}$ and $\sigma^\rk(vA)=\langle e_i \st i \in \{1,\ldots, r\} \rangle$.
    \item Let $I=\sH(v)$ and fix any basis $\Gamma$ of $\Fm/\Fq$. The rows indexed by $\{1,\dots,n\}\setminus I$ in $\Gamma(v)$ are identically zero. Hence, $\sigma^\rk(v)\subseteq \langle e_i \st i \in I \rangle$.
    %$\sigma^\rk(v)\subseteq \langle e_i \mid i \in I \rangle$. 
    Vice versa, assume that there exists $t \in \sH(v)\setminus I$. Fix any basis $\Gamma$ of $\Fm/\Fq$. Since $t \in \sH(v)$, there exists $j\in [m]$ such that $\Gamma(v_t)_j\neq 0$. Hence, the vector $a=(\Gamma(v_1)_j,\ldots,\Gamma(v_n)_j)$ belongs to $\sigma^\rk(v)$ and has a nonzero entry in the $t$-th coordinate. Thus, $\sigma^\rk(v)\not\subseteq \langle e_i \st i \in I \rangle$. The second statement immediately follows. \qedhere
\end{enumerate}
\end{proof}

\subsection{Linear Cutting Blocking Sets and the Parameters of Minimal Codes}

In this subsection we give a geometric characterization of minimal codes in the rank metric. This will allow us to derive bounds on their parameters. 

We start with the  $q$-analogue of the notion of a cutting blocking set.
%, introduced in \cite{bonini2020minimal}.  

\begin{definition} 
 A $\Fmk$ system $\mU$ is called a \textbf{linear cutting blocking set} if for  any $\Fm$-hyperplanes $H,H^\prime \subseteq \Fm^k$ we have $(\mU\cap H)\subseteq (\mU\cap H')$ implies $H=H'$.
 We will say that that 
 $\mU$ is a linear cutting $\Fmk$ blocking set to emphasize the parameters.
\end{definition}

While the term ``linear cutting blocking set'' might seem not fully consistent with the terminology used so far (since such an object is not a linear set), one can verify 
that an $\Fmk$ system $\mU$ is a linear cutting blocking set if and only if its associated linear set $L_{\mU}$ is a cutting blocking set in $\PG(k-1,q^m)$. The proof of this fact can be found in Section \ref{sec:HM_RM}; see Theorem~\ref{thm:HM_iff_RM}. This explains the choice of the terminology.

We will need the following characterization of linear cutting blocking sets.

\begin{proposition}\label{prop:cutting}
 A $\Fmk$  system $\mU$ is a linear cutting blocking set if and only if for every $\Fm$-hyperplane $H$ we have $\langle H\cap \mU\rangle_{\Fm}=H$.
\end{proposition}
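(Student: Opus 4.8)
The plan is to prove the two implications separately, exploiting the elementary observation that $H\cap\mU\subseteq H$ always yields $\langle H\cap\mU\rangle_{\Fm}\subseteq H$; since both sides are $\Fm$-subspaces and $\dim_{\Fm} H = k-1$, the equality $\langle H\cap\mU\rangle_{\Fm}=H$ is equivalent to $\dim_{\Fm}\langle H\cap\mU\rangle_{\Fm}=k-1$. This reduces everything to a comparison of dimensions and inclusions.

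For the implication ``span condition $\Rightarrow$ linear cutting blocking set'', I would take two $\Fm$-hyperplanes $H,H'$ with $\mU\cap H\subseteq\mU\cap H'$. Then in particular $\mU\cap H\subseteq H'$, so passing to $\Fm$-spans gives $H=\langle\mU\cap H\rangle_{\Fm}\subseteq H'$; as both have $\Fm$-dimension $k-1$, this forces $H=H'$, which is exactly the defining property of a linear cutting blocking set.

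For the converse I would argue by contraposition, producing two distinct hyperplanes that violate the cutting property. Concretely, suppose there is a hyperplane $H$ with $W:=\langle H\cap\mU\rangle_{\Fm}\subsetneq H$, so that $\dim_{\Fm} W\le k-2$. The key geometric point is that a subspace of $\Fm^k$ of codimension at least $2$ is contained in more than one hyperplane; indeed, the number of $\Fm$-hyperplanes through $W$ equals $\binom{k-\dim_{\Fm}W}{1}_{q^m}\ge q^m+1\ge 2$. Hence there exists a hyperplane $H'\ne H$ with $W\subseteq H'$. Then $\mU\cap H\subseteq W\subseteq H'$ and trivially $\mU\cap H\subseteq\mU$, whence $\mU\cap H\subseteq\mU\cap H'$ with $H\ne H'$, contradicting the assumption that $\mU$ is a linear cutting blocking set.

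The only non-formal ingredient is the counting fact that a subspace of codimension at least $2$ lies in at least two hyperplanes, which is precisely where the hypothesis $\dim_{\Fm} W\le k-2$ enters; all remaining steps are direct manipulations of inclusions and dimensions. I therefore do not expect a genuine obstacle, the main care being to set up the contrapositive correctly and to locate the second hyperplane $H'$ containing $W$.
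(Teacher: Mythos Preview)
Your proposal is correct and follows essentially the same route as the paper's proof: the $(\Leftarrow)$ direction uses the span hypothesis to pass from $\mU\cap H\subseteq\mU\cap H'$ to $H\subseteq H'$, and the $(\Rightarrow)$ direction argues by contraposition, picking a second hyperplane through the proper subspace $W=\langle H\cap\mU\rangle_{\Fm}$. The only cosmetic difference is that the paper notes there are at least $q^m$ hyperplanes through $W$ distinct from $H$, whereas you count via the Gaussian binomial and only use that there is at least one; either suffices.
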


\begin{proof}
$(\Leftarrow)$ Let $H, H'$ be two $\Fm$-hyperplanes of $\Fm^k$ such that $(\mU\cap H)\subseteq (\mU\cap H')$. Hence, $H=\langle H\cap \mU\rangle_{\Fm}\subseteq \langle H'\cap \mU\rangle_{\Fm}=H'$. Since $H$ and $H'$ are both hyperplanes, they have to be equal. 

$(\Rightarrow)$ Suppose by contradiction that there exists an $\Fm$-hyperplane $H$ of $\Fm^k$ such that $\langle H\cap \mU\rangle_{\Fm}=X \subsetneq H$. Then, for every hyperplane $H'\supset X$ we have   $(\mU\cap H)\subseteq (\mU\cap H')$. Since there are at least $q^m$ such hyperplanes different from $H$, we obtain that $\mU$ is not a linear cutting blocking set.
\end{proof}

\begin{corollary}\label{cor:sizecutting}
 If $\mU$ is a linear cutting $\Fmk$ blocking set, then for every $\Fm$-hyperplane of $\Fm^k$ we have $|H\cap \mU|\geq q^{k-1}$.
\end{corollary}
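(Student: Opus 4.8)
The plan is to reduce the statement to an elementary dimension count, using Proposition~\ref{prop:cutting} to translate the cutting condition into a spanning condition. First I would invoke Proposition~\ref{prop:cutting}: since $\mU$ is a linear cutting $\Fmk$ blocking set, for every $\Fm$-hyperplane $H \subseteq \Fm^k$ we have $\langle H \cap \mU\rangle_{\Fm} = H$. Next, observe that $H \cap \mU$ is an $\Fq$-subspace of $\Fm^k$, being the intersection of the $\Fm$-subspace $H$ (which is in particular an $\Fq$-subspace) with the $\Fq$-subspace $\mU$. Consequently $|H \cap \mU| = q^{\dim_{\Fq}(H \cap \mU)}$, and it suffices to prove the dimension bound $\dim_{\Fq}(H \cap \mU) \geq k-1$.

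The key step is the general fact that, for any $\Fq$-subspace $W$ of $\Fm^k$, one has $\dim_{\Fm}\langle W\rangle_{\Fm} \leq \dim_{\Fq}(W)$. Indeed, if $b_1,\dots,b_t$ is an $\Fq$-basis of $W$ with $t = \dim_{\Fq}(W)$, then these same $t$ vectors $\Fm$-span $\langle W\rangle_{\Fm}$, so the $\Fm$-dimension of that span is at most $t$. Applying this with $W = H \cap \mU$ and using $\langle H \cap \mU\rangle_{\Fm} = H$ together with $\dim_{\Fm}(H) = k-1$, I would conclude $k-1 = \dim_{\Fm}(H) \leq \dim_{\Fq}(H \cap \mU)$, and therefore $|H \cap \mU| = q^{\dim_{\Fq}(H\cap\mU)} \geq q^{k-1}$, as claimed.

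There is no genuine obstacle here once Proposition~\ref{prop:cutting} is in hand; the corollary is essentially a one-line consequence. The only point deserving care is keeping the direction of the inequality correct: passing from an $\Fq$-subspace to its $\Fm$-span can only shrink (or preserve) the number of vectors needed to span, so the $\Fq$-dimension of a set that $\Fm$-spans a $(k-1)$-dimensional space must be at least $k-1$. It is worth noting that this bound is exactly what one expects: a scattered $q$-system meeting each hyperplane in precisely dimension $k-1$ would attain equality, so the estimate is tight in general.
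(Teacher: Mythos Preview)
Your proof is correct and follows essentially the same approach as the paper: both invoke Proposition~\ref{prop:cutting} to get $\langle H\cap\mU\rangle_{\Fm}=H$, then observe that an $\Fq$-basis of $H\cap\mU$ $\Fm$-spans $H$, forcing $\dim_{\Fq}(H\cap\mU)\ge k-1$. The only cosmetic difference is that the paper phrases the key inequality via $\Fq$-dimensions (writing $m(k-1)=\dim_{\Fq}\langle H\cap\mU\rangle_{\Fm}\le m\,\dim_{\Fq}(H\cap\mU)$), whereas you work directly with $\Fm$-dimensions; the content is identical.
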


\begin{proof}
 Let $t:=\dim_{\Fq}(H\cap \mU)$. Then an $\Fq$-basis for $H\cap \mU$ is also a set of $\Fm$-generators for $\langle H\cap \mU\rangle_{\Fm}$. Hence, since $\mU$ is a linear cutting blocking set, by Proposition \ref{prop:cutting} we have
 $$m(k-1)= \dim_{\Fq}(\langle H\cap \mU\rangle_{\Fm})\leq mt,$$
 which shows that $t\geq k-1$. 
\end{proof}

The geometric description of minimal rank-metric codes via linear cutting blocking sets relies on the following characterization of the inclusion of rank supports.

\begin{theorem}\label{thm:revInclusion}
 Let $G$ be a generator matrix for a nondegenerate $\Fmk$ code, $\mU$ be the corresponding $\Fmk$ system and $u,v \in \Fm^k\setminus\{0\}$. Then, $$\sigma^\rk(uG)\subseteq \sigma^\rk(vG) \qquad \mbox{if and only if} \qquad (\langle u\rangle^\perp \cap \mU) \supseteq (\langle v\rangle^\perp \cap \mU).$$
\end{theorem}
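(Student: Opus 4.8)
The plan is to prove the two implications by reducing the containment of rank supports to a statement about which columns of $uG$ and $vG$ vanish simultaneously, using the flexibility of the $\GL_n(q)$-action. The key tool is Lemma~\ref{lem:supp}, which tells us that the Hamming support of a vector is the minimal coordinate set whose span contains the rank support, together with Lemma~\ref{lem:rkvG}, which identifies $\rk(vG)$ with $n-\dim_{\Fq}(\mU\cap\langle v\rangle^\perp)$. My first move is to observe that for a column $g_i$ of $G$ (equivalently an element of a fixed basis of $\mU$), the $i$-th entry of $vG$ is $v\cdot g_i$, which is zero precisely when $g_i\in\langle v\rangle^\perp$. Thus $\sH(vG)$ records exactly which basis vectors of $\mU$ lie \emph{outside} $\langle v\rangle^\perp$, and $\langle v\rangle^\perp\cap\mU$ is spanned by those lying inside.

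Next I would make the containment of rank supports intrinsic so that it no longer depends on the chosen generator matrix. The cleanest route is: for any fixed $A\in\GL_n(q)$, the matrix $GA$ is another generator matrix whose columns form another $\F_q$-basis of $\mU$, and $\sigma^\rk(uG)=\sigma^\rk((uG)A)=\sigma^\rk(u(GA))$ by Proposition~\ref{prop:prel}(3) applied together with the rank-support invariance under the $\GL_n(q)$-action. By Lemma~\ref{lem:supp}(1), I may choose $A$ so that $\sigma^\rk(u(GA))=\langle e_i : i\in\sH(u(GA))\rangle$ is a coordinate subspace, and then Lemma~\ref{lem:supp}(2) characterizes $\sH$ as the minimal such index set. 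The plan for the forward direction ($\Rightarrow$) is then to argue contrapositively: if $(\langle v\rangle^\perp\cap\mU)\not\subseteq(\langle u\rangle^\perp\cap\mU)$, there is a basis vector $g_i$ of $\mU$ with $v\cdot g_i=0$ but $u\cdot g_i\neq 0$; choosing $GA$ to include $g_i$ as a column shows $i\in\sH(u(GA))\setminus\sH(v(GA))$, and via Lemma~\ref{lem:supp}(2) this yields a coordinate $i$ witnessing $\sigma^\rk(uG)\not\subseteq\sigma^\rk(vG)$.

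For the reverse direction ($\Leftarrow$), assuming $(\langle u\rangle^\perp\cap\mU)\supseteq(\langle v\rangle^\perp\cap\mU)$, I want to show every element of $\sigma^\rk(uG)$ lies in $\sigma^\rk(vG)$. The idea is to pick a single $A\in\GL_n(q)$ that simultaneously puts $v(GA)$ into a convenient form: extend an $\F_q$-basis of $\langle v\rangle^\perp\cap\mU$ to a basis of $\mU$ and use it as the columns of $GA$, so that $\sH(v(GA))$ is exactly the set of indices corresponding to basis vectors outside $\langle v\rangle^\perp$. The inclusion of intersections forces every basis vector outside $\langle u\rangle^\perp$ to also lie outside $\langle v\rangle^\perp$, giving $\sH(u(GA))\subseteq\sH(v(GA))$; then Lemma~\ref{lem:supp}(2) upgrades this coordinate containment to the rank-support containment $\sigma^\rk(uG)\subseteq\sigma^\rk(vG)$.

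The main obstacle I anticipate is the bookkeeping in translating between the \emph{intrinsic} geometric condition on $\langle\,\cdot\,\rangle^\perp\cap\mU$ and the \emph{representation-dependent} Hamming supports $\sH(\,\cdot\,)$, since a single $A$ must be chosen to control both $uG$ and $vG$ at once; the asymmetry between the two vectors means I cannot diagonalize both simultaneously and must instead argue via coordinate-by-coordinate inclusion of supports as $A$ ranges, justified by Lemma~\ref{lem:supp}(2). A secondary subtlety is verifying that the correspondence ``basis vector in $\langle v\rangle^\perp$'' $\leftrightarrow$ ``zero entry of $v(GA)$'' behaves well under the minimality characterization of $\sH$, which is exactly what Lemma~\ref{lem:supp}(2) provides, so I expect that lemma to carry the technical weight of the argument.
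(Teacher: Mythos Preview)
Your plan is essentially the paper's proof. For $(\Leftarrow)$ both you and the paper extend an $\F_q$-basis of $\langle v\rangle^\perp\cap\mU$ to one of $\mU$ and compare Hamming supports of $uGA$ and $vGA$; for $(\Rightarrow)$ the paper normalizes $vG$ directly via Lemma~\ref{lem:supp}(1) and then reads off which columns of $GA$ land in $\langle u\rangle^\perp$, while you argue contrapositively with a single witness vector in $(\langle v\rangle^\perp\cap\mU)\setminus\langle u\rangle^\perp$. Both routes work and rest on the same two lemmas.

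One genuine slip to fix: the assertion ``$\sigma^\rk(uG)=\sigma^\rk((uG)A)$'' is false. Proposition~\ref{prop:prel}(3) gives $\Gamma(wA)=A^\top\Gamma(w)$, hence $\sigma^\rk(wA)=A^\top\,\sigma^\rk(w)$; the rank support is \emph{equivariant}, not invariant, under $\GL_n(q)$. What your argument actually needs (and what the paper uses) is only that $A^\top$ is a linear isomorphism, so that $\sigma^\rk(uG)\subseteq\sigma^\rk(vG)\Longleftrightarrow\sigma^\rk(uGA)\subseteq\sigma^\rk(vGA)$. A second small point: in your $(\Leftarrow)$, Lemma~\ref{lem:supp}(2) alone only yields $\sigma^\rk(uGA)\subseteq\langle e_i:i\in\sH(vGA)\rangle$, and one must still check that this coordinate span equals $\sigma^\rk(vGA)$. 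That follows from the dimension count $\rk(vGA)=n-\dim_{\Fq}(\langle v\rangle^\perp\cap\mU)=|\sH(vGA)|$ via Lemma~\ref{lem:rkvG}, which the paper makes explicit and which you listed as a key tool but did not actually invoke at this step.
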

\begin{proof}

 \underline{$(\Leftarrow)$} Let $x_1,\ldots, x_t$ be an $\Fq$-basis of the space $X:=(\langle v\rangle^\perp \cap \mU)$. Let $A \in \GL_n(q)$ be such that 
 $$GA=(\,x_1\,|\,\cdots \, |\,x_t\,|\, G' \, ),$$
 where $G'\in \Fm^{k\times(n-t)}$. We have $uGA=(0,\ldots,0| uG')$ and $vGA=(0,\ldots,0 | vG')$. Moreover, by~\eqref{eq:rankvG} we have $\rk(vGA)=n-t$ and, by Lemma \ref{lem:supp}, $\sigma^\rk(vGA)=\langle e_i \st i=t+1\ldots, n\rangle$ and $\sigma^\rk(uGA)\subseteq \langle e_i \st i=t+1\ldots, n\rangle$. This means that $\sigma^\rk(uGA)\subseteq \sigma^\rk(vGA)$. Finally, Proposition~\ref{prop:prel} implies  $\sigma^\rk(uG)\subseteq \sigma^\rk(vG)$.
 
 \underline{$(\Rightarrow)$} Assume now that $\sigma^\rk(uG)\subseteq \sigma^\rk(vG)$. Let $r:=\rk(vG)$. By the first part of Lemma \ref{lem:supp} there exists $A\in\GL_n(q)$ such that $\sigma^\rk(vGA)=\langle e_1,\ldots,e_r\rangle$. Hence, $\sigma^\rk(uGA)\subseteq \sigma^\rk(vGA)=\langle e_1,\ldots,e_r\rangle$. Denote by $x_1,\ldots, x_n$ the columns of $GA$, which also form a basis of $\mU$. In this notation we have $\langle v\rangle^\perp \cap \mU=\langle x_{r+1},\ldots, x_n \rangle_{\Fq}$. Moreover, by the second part of Lemma \ref{lem:supp} we have $\sH(uGA)\subseteq \{1,\ldots,r\}$. This implies that $x_i\in\langle u\rangle^\perp$ for $i=r+1,\ldots, n$. Hence, $(\langle u\rangle^\perp \cap \mU)\supseteq(\langle v\rangle^\perp \cap \mU)$.
\end{proof}

By combining Theorem \ref{thm:revInclusion} and the correspondence stated in Theorem~\ref{thm:1-1} we obtain the following.

\begin{corollary}\label{coro:1-1cutting}
 The correspondence $(\Phi,\Psi)$ defined in Section \ref{sec:geo-rk} induces a 1-1 correspondence between minimal rank-metric codes and linear cutting blocking sets.
\end{corollary}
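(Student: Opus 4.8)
The plan is to combine the two correspondences already established in the excerpt. By Theorem~\ref{thm:1-1}, the maps $\Phi$ and $\Psi$ give a bijection between equivalence classes of nondegenerate $\Fmk$ rank-metric codes and equivalence classes of $\Fmk$ systems; and by Corollary~\ref{cor:sizecutting} and Proposition~\ref{prop:cutting}, linear cutting blocking sets are a well-defined subclass of $\Fmk$ systems. So the real content is to verify that, under this bijection, the minimal codes are \emph{exactly} those whose associated $q$-system is a linear cutting blocking set. Since everything respects the equivalence relations on both sides, it suffices to work with a fixed generator matrix $G$ of a nondegenerate code $\mC$ and the $\Fmk$ system $\mU$ it generates.

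First I would fix $\mC$ nondegenerate with generator matrix $G$ and let $\mU$ be the corresponding system. The key translation tool is Theorem~\ref{thm:revInclusion}, which converts the support-inclusion condition $\sigma^\rk(uG)\subseteq\sigma^\rk(vG)$ into the reverse-inclusion condition $(\langle u\rangle^\perp\cap\mU)\supseteq(\langle v\rangle^\perp\cap\mU)$ on the intersections of $\mU$ with the two hyperplanes $\langle u\rangle^\perp$ and $\langle v\rangle^\perp$. The plan is then to spell out the definition of minimality (Definition~\ref{def:mincodes}) codeword by codeword and push it through this equivalence. Since every codeword of $\mC$ is of the form $vG$ for $v\in\Fm^k$, and since $\sigma^\rk((\alpha v)G)=\sigma^\rk(vG)$ while $v\mapsto vG$ is injective, the codeword $vG$ is minimal precisely when the only $u$ with $\sigma^\rk(uG)\subseteq\sigma^\rk(vG)$ are scalar multiples of $v$.

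Now I would carry out the main equivalence. By Theorem~\ref{thm:revInclusion}, $vG$ is minimal if and only if: for all nonzero $u\in\Fm^k$, the inclusion $(\langle u\rangle^\perp\cap\mU)\supseteq(\langle v\rangle^\perp\cap\mU)$ forces $\langle u\rangle=\langle v\rangle$, equivalently $\langle u\rangle^\perp=\langle v\rangle^\perp$. Ranging $v$ over all nonzero vectors (and noting that $v\mapsto\langle v\rangle^\perp$ is a bijection between projective points and $\Fm$-hyperplanes of $\Fm^k$), minimality of the whole code $\mC$ translates into: for all $\Fm$-hyperplanes $H,H'$, the inclusion $(\mU\cap H)\subseteq(\mU\cap H')$ implies $H=H'$. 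But this is exactly the definition of $\mU$ being a linear cutting blocking set. Thus $\mC$ is minimal if and only if $\mU$ is a linear cutting blocking set, and the $(\Phi,\Psi)$ bijection of Theorem~\ref{thm:1-1} restricts to the desired $1$-$1$ correspondence.

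I do not expect a serious obstacle here, since Theorem~\ref{thm:revInclusion} is doing essentially all the heavy lifting; the argument is a matching-up of two sets of quantifiers. The one point that deserves genuine care is the direction-of-inclusion bookkeeping: minimality is phrased as a forward inclusion of \emph{supports} $\sigma^\rk(uG)\subseteq\sigma^\rk(vG)$, whereas the cutting-blocking-set condition is phrased as a forward inclusion of \emph{hyperplane intersections} $(\mU\cap H)\subseteq(\mU\cap H')$, and Theorem~\ref{thm:revInclusion} flips the inclusion. I would therefore be explicit that the index $u$ in the minimality condition corresponds to $H'$ and the fixed $v$ corresponds to $H$, so that the two conditions line up correctly. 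A minor secondary check is that restricting the bijection is legitimate: because $\Phi$ and $\Psi$ are mutually inverse and the minimality of a code depends only on its equivalence class (isometries preserve rank supports and hence minimal codewords), the restriction to minimal codes on one side and linear cutting blocking sets on the other is automatically a well-defined bijection.
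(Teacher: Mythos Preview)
Your proposal is correct and takes essentially the same approach as the paper, which simply states that the result follows by combining Theorem~\ref{thm:revInclusion} with the correspondence of Theorem~\ref{thm:1-1}. You have spelled out carefully the quantifier matching and the inclusion reversal that the paper leaves implicit; the only superfluous reference is to Corollary~\ref{cor:sizecutting} and Proposition~\ref{prop:cutting}, which are not needed here since the linear cutting blocking set condition is used directly in its defining form.
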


 Corollary \ref{coro:1-1cutting} has several consequences in the theory of minimal codes. The first result we drive concerns the construction of new minimal codes from existing ones.  

\begin{corollary}
Let $\mC$ be an $\Fmk$ minimal rank-metric code with  generator matrix $G$, and let $v \in \Fm^k$. Then the  $[n+1,k]_{q^m/q}$ code $\bar{\mC}=\rs(G \, \mid \, v^\top)$ is minimal.
\end{corollary}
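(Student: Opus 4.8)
The plan is to prove minimality of $\bar{\mC}$ directly from the definition of a minimal codeword (Definition~\ref{def:mincodes}), using only the minimality of $\mC$ together with a simple observation about how rank supports behave when one deletes a coordinate. This route avoids any nondegeneracy hypothesis and works uniformly, so I would deliberately \emph{not} pass through the geometric correspondence of Corollary~\ref{coro:1-1cutting}: that would additionally require $\bar{\mC}$ to be nondegenerate (which fails precisely when $v^\top$ lies in the $\Fq$-span of the columns of $G$), whereas the direct argument has no such issue.

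First I would record the key fact. Let $\pi \colon \Fq^{n+1} \to \Fq^n$ be the projection deleting the last coordinate. For any $w=(w',s) \in \Fm^n \times \Fm = \Fm^{n+1}$, fixing a basis $\Gamma$ of $\Fm/\Fq$, the matrix $\Gamma(w)$ is obtained from $\Gamma(w')$ by appending the single row recording the coordinates of $s$; since $\sigma^\rk$ is by definition the column space of $\Gamma(\cdot)$, applying $\pi$ to the columns of $\Gamma(w)$ returns exactly the columns of $\Gamma(w')$. Hence $\pi(\sigma^\rk(w))=\sigma^\rk(w')$, and in particular $\sigma^\rk(a)\subseteq\sigma^\rk(b)$ forces $\pi(\sigma^\rk(a))\subseteq\pi(\sigma^\rk(b))$ by monotonicity of $\pi$.

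Next I would parametrize the codewords. Every codeword of $\bar{\mC}=\rs(G\mid v^\top)$ has the form $(uG \mid uv^\top)$ for some $u \in \Fm^k$, where $uv^\top \in \Fm$ is the scalar coming from the appended column. Take two codewords $a=(uG \mid uv^\top)$ and $b=(wG \mid wv^\top)$ with $\sigma^\rk(a)\subseteq \sigma^\rk(b)$. Applying $\pi$ and the identity above yields $\sigma^\rk(uG)\subseteq\sigma^\rk(wG)$ inside $\Fq^n$. Since $uG$ and $wG$ are codewords of the minimal code $\mC$, minimality gives $uG=\alpha\, wG$ for some $\alpha\in\Fm$. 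As $G$ is a generator matrix of a $k$-dimensional code it has full row rank, so the map $u \mapsto uG$ is injective and hence $u=\alpha w$. Consequently $uv^\top=\alpha\, wv^\top$, so $a=(uG\mid uv^\top)=\alpha(wG\mid wv^\top)=\alpha b$. Thus every codeword of $\bar{\mC}$ is minimal, i.e.\ $\bar{\mC}$ is minimal.

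The only genuinely load-bearing step is the support-projection identity $\pi(\sigma^\rk(w))=\sigma^\rk(w')$; everything afterwards is formal bookkeeping. I would emphasize that the argument uses neither nondegeneracy of $\mC$ or $\bar{\mC}$ nor any property of the appended vector $v$, which makes the passage from a shorter minimal code to a longer one entirely unconstrained; this is also why the direct approach is preferable here to the cutting-blocking-set reformulation.
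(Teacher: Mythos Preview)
Your proof is correct and takes a genuinely different route from the paper's. The paper argues via the geometric correspondence of Corollary~\ref{coro:1-1cutting}: it first reduces to the nondegenerate case, then distinguishes whether the new column $v$ already lies in the $\Fq$-span of the columns of $G$ (giving a degenerate $\bar{\mC}$ equivalent to $\{(c\mid 0):c\in\mC\}$) or not (in which case $\bar{\mU}=\langle\mU,v\rangle_{\Fq}$ is shown to be a linear cutting blocking set because $\langle H\cap\bar{\mU}\rangle_{\Fm}\supseteq\langle H\cap\mU\rangle_{\Fm}=H$ for every hyperplane $H$). Your argument bypasses all of this by observing the single identity $\pi(\sigma^\rk(w))=\sigma^\rk(w')$ for the projection $\pi$ onto the first $n$ coordinates, and then reading minimality of $\bar{\mC}$ directly off minimality of $\mC$ via injectivity of $u\mapsto uG$. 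The advantage of your route is that it is strictly more elementary, avoids case distinctions, and requires no nondegeneracy hypothesis or passage through $q$-systems; the paper's route, on the other hand, keeps the argument within the cutting-blocking-set framework that drives the rest of Section~\ref{sec:5}, so it reinforces the geometric viewpoint even at the cost of a slightly longer proof.
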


\begin{proof}
Without loss of generality, we may assume that $\mC$ is nondegenerate. Let $\mU$ be any $\Fmk$ system associated to $[\mC]$ and let $\bar{\mU}=\langle \mU, v \rangle_{\F_q}$. If $v \in \mU$, then by Proposition \ref{prop:newnondegeneracy} the code $\bar{\mC}$ is degenerate and it is equivalent to the code $\{(\, c \,\mid \, 0 \,)  \st c \in \mC\}$, which is clearly minimal. 
Hence, assume that $v \notin \mU$. By Proposition \ref{prop:newnondegeneracy} we have that  $\bar{\mC}$ is nondegenerate and~$\bar{\mU}$ is an $[n+1,k]_{q^m/q}$ system associated to $\bar{\mC}$. 
Let $H$ be any $\F_{q^m}$-hyperplane of $\Fm^k$. Then
$$ H \supseteq  \langle H\cap \bar{\mU} \rangle_{\F_{q^m}} =  \langle H\cap (\mU + \langle v \rangle_{\F_q}) \rangle_{\F_{q^m}}\supseteq  \langle H\cap \mU\rangle  = H,$$
where the latter equality follows from the fact that, since $\C$ is minimal,
$\mU$ is a linear cutting blocking set by Theorem \ref{coro:1-1cutting}. Therefore $\bar{\mU}$ is also a linear cutting blocking set and we conclude using Theorem \ref{coro:1-1cutting} again.
\end{proof}

The following two results are also consequences of Corollary~\ref{coro:1-1cutting} and provide information about the parameters of a minimal $\Fmk$ code.

\begin{corollary}\label{cor:max_rank}
 Let $\mC$ be a  minimal $\Fmk$ code. Then for every $c\in \mC$ we have $\rk(c)\leq \dim_{\Fq}(\sigma^\rk(\C))-k+1$. In particular, $\maxrk(\mC) \le \dim_{\Fq}(\sigma^\rk(\C))-k+1\leq n-k+1$.
\end{corollary}

\begin{proof}
Let $n'=\dim_{\Fq}(\sigma^\rk(\C))$ for ease of notation. As observed in Remark \ref{rem:effectivelength}, we can isometrically embed $\C$ in $\Fm^{n'}$. Moreover, the resulting code is minimal if and only if $\C$ is minimal. Therefore we can assume without loss of generality that $\mC$ is nondegenerate of length $n=\dim_{\Fq}(\sigma^\rk(\C))$.
 Let $\mU$ be any $\Fmk$  system associated to $\mC$. By Corollary~\ref{coro:1-1cutting}, $\mU$ is a linear cutting blocking set. From the  proof of Corollary \ref{cor:sizecutting} we get that $\dim_{\Fq}(H\cap \mU)\geq k-1$ for every $\Fm$-hyperplane of $\Fm^k$, and we conclude using Lemma \ref{lem:rkvG}.
\end{proof}

\begin{corollary}\label{coro:lowerboundlength}
 If $\mC$ is a  minimal $\Fmk$ code with $k \geq 2$, then $n \geq k+m-1$.
\end{corollary}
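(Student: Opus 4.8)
The plan is to combine the two structural bounds on the maximum rank of a minimal code that the preceding results already provide, namely Proposition~\ref{prop:max} and Corollary~\ref{cor:max_rank}. First I would reduce to the nondegenerate case: writing $n' := \dim_{\Fq}(\sigma^\rk(\mC))$, Remark~\ref{rem:effectivelength} lets me isometrically embed $\mC$ into a nondegenerate $[n',k]_{q^m/q}$ code $\mC'$, which is again minimal (as observed in the proof of Corollary~\ref{cor:max_rank}) and satisfies $\maxrk(\mC')=\maxrk(\mC)$ and $n' \le n$. Since $n \ge n'$, it suffices to prove $n' \ge k+m-1$.

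Next I would apply Proposition~\ref{prop:max} to the nondegenerate code $\mC'$ to obtain the exact value
\[
  \maxrk(\mC) = \min\{n',m\},
\]
and Corollary~\ref{cor:max_rank} to obtain the upper bound $\maxrk(\mC) \le n'-k+1$. Putting these together yields $\min\{n',m\} \le n'-k+1$.

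The crux is to exploit the hypothesis $k \ge 2$ at this point. Because $k \ge 2$, the right-hand side satisfies $n'-k+1 \le n'-1 < n'$, so $\min\{n',m\} < n'$; this forces $m < n'$ and hence $\min\{n',m\}=m$. Substituting back gives $m \le n'-k+1$, that is $n' \ge k+m-1$, and therefore $n \ge n' \ge k+m-1$, as claimed.

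I do not expect a serious obstacle here, since all the hard work is already packaged in Proposition~\ref{prop:max} and Corollary~\ref{cor:max_rank}; the only point that requires genuine care is the role of the hypothesis $k\ge 2$, which is precisely what rules out the alternative $\min\{n',m\}=n'$ (that alternative would instead force $k \le 1$). The degenerate-to-nondegenerate reduction is routine but should not be skipped, since Proposition~\ref{prop:max} applies only to nondegenerate codes.
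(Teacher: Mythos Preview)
Your proposal is correct and follows essentially the same route as the paper: reduce to the nondegenerate case, combine Proposition~\ref{prop:max} with Corollary~\ref{cor:max_rank}, and use $k\ge 2$ to force $\min\{n',m\}=m$. The paper's argument is identical in substance, only phrased more tersely (it simply writes ``without loss of generality $\mC$ is nondegenerate'' and works directly with $n$ rather than introducing $n'$).
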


\begin{proof}
Without loss of generality we shall assume that $\mC$ is nondegenerate. Therefore by Proposition \ref{prop:max} we have $\maxrk(\mC)=\min\{m,n\}$. Since $k \ge 2$, by Corollary \ref{cor:max_rank}  we also have $\maxrk(\mC) \le n-k+1<n$. Therefore
$\maxrk(\mC)=m$ and using again the fact that $\maxrk(\mC) \le n-k+1$ we find $n \ge m+k-1$, as desired.
\end{proof}

\subsection{Connections with Hamming-Metric Minimal Codes}\label{sec:HM_RM}

It is natural to ask how the notions of minimality in the rank and in the Hamming metric relate to each other. This is the question we address in this subsection. In particular, we prove that a 
nondegenerate rank-metric code $\mC$ is minimal if and only if its associated code(s) $\mC^\HH$ is minimal; see Section~\ref{sub:assoc} for the notation.

The following result shows that minimality in the Hamming metric implies minimality in the rank metric. We propose two proofs, one in coding theory parlance and the other in the language of projective systems.

 \begin{proposition} \label{prop:HMnew}
 Let $\mC$ be an $\Fmk$ code with the property of being  
 Hamming-minimal. Then $\mC$ is rank-minimal.
\end{proposition}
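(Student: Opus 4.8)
The plan is to reduce rank-minimality to Hamming-minimality by showing that, for arbitrary vectors, inclusion of rank supports already forces inclusion of Hamming supports. Concretely, I would first isolate the elementary implication
$$\sigma^\rk(v') \subseteq \sigma^\rk(v) \ \Longrightarrow\ \sH(v') \subseteq \sH(v) \quad \text{for all } v, v' \in \Fm^n.$$
This is exactly where Lemma~\ref{lem:supp}(2) does the work: taking $I=\sH(v)$ gives $\sigma^\rk(v)\subseteq \mE_{\sH(v)}$, so the hypothesis yields $\sigma^\rk(v')\subseteq \mE_{\sH(v)}$, and applying Lemma~\ref{lem:supp}(2) to $v'$ with the \emph{same} set $I=\sH(v)$ returns $\sH(v')\subseteq \sH(v)$.

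With this observation in hand the proposition is immediate. Let $v\in\mC$ and let $v'\in\mC$ be nonzero with $\sigma^\rk(v')\subseteq\sigma^\rk(v)$. By the displayed implication $\sH(v')\subseteq\sH(v)$, and since $\mC$ is Hamming-minimal this forces $v'=\alpha v$ for some $\alpha\in\Fm$. As $v$ was arbitrary, every codeword of $\mC$ is rank-minimal, i.e. $\mC$ is rank-minimal. I note that this argument never uses nondegeneracy, so it applies verbatim to any $\Fmk$ code.

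For a second, geometric, proof I would first assume $\mC$ nondegenerate (reducing to this case via the isometric embedding of Remark~\ref{rem:effectivelength}, which preserves both notions of minimality, and recalling that rank-nondegeneracy implies Hamming-nondegeneracy by Proposition~\ref{prop:newnondegeneracy}). Passing to an associated $\Fmk$ system $\mU$ generated over $\Fq$ by the columns $g_1,\dots,g_n$ of a generator matrix, Hamming-minimality of $\mC$ corresponds via Theorem~\ref{thm:cuttingHammingminimal} to the point set $\{[g_i]\}$ being a cutting blocking set in $\PG(k-1,q^m)$, i.e. $\langle g_i : g_i\in H\rangle_{\Fm}=H$ for every $\Fm$-hyperplane $H$. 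Since each $g_i$ lies in $\mU$, the columns contained in $H$ lie in $H\cap\mU$, whence $H=\langle g_i : g_i\in H\rangle_{\Fm}\subseteq\langle H\cap\mU\rangle_{\Fm}\subseteq H$. Thus $\langle H\cap\mU\rangle_{\Fm}=H$ for every hyperplane $H$, so $\mU$ is a linear cutting blocking set by Proposition~\ref{prop:cutting}, and $\mC$ is rank-minimal by Corollary~\ref{coro:1-1cutting}.

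The proof carries essentially no difficulty once Lemma~\ref{lem:supp} is available; the only genuinely substantive point is the support-inclusion implication above, and even that is a one-line consequence of part~(2) of that lemma. The main thing to get right is the \emph{direction} of the implication: it is rank-support inclusion that forces Hamming-support inclusion, not the reverse. This is precisely what makes Hamming-minimality the stronger hypothesis and yields only the stated one-way implication, the converse failing in general.
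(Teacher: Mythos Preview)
Your proposal is correct and mirrors the paper's own treatment almost exactly: the paper also gives two proofs, the first using Lemma~\ref{lem:supp}(2) to deduce $\sH(v)\subseteq\sH(v')$ from $\sigma^\rk(v)\subseteq\sigma^\rk(v')$ (phrased there as a contrapositive), and the second passing to the associated $q$-system and showing that the cutting property of the column-point set forces $\langle H\cap\mU\rangle_{\Fm}=H$. The only minor wrinkle is your justification of the nondegeneracy reduction in the geometric proof---the isometric embedding of Remark~\ref{rem:effectivelength} involves right-multiplication by a non-monomial matrix in $\GL_n(q)$, which need not preserve Hamming-minimality---but the paper simply asserts ``without loss of generality'' here as well, and in any case your first proof stands on its own without this reduction.
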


\begin{proof}
 Suppose that $\mC$ is not a minimal rank-metric code. Then there exist two codewords $v,v^\prime$ that are $\Fm$-linearly independent such that $\sigma^\rk(v)\subseteq \sigma^\rk(v^\prime)$. By Lemma \ref{lem:supp}, we also have that $\sH(v^\prime)=\arg\min\{|I| \st \sigma^\rk(v^\prime)\subseteq \mE_I \}$, where $\mE_I:=\langle e_i \st i \in I\rangle$. Since $\sigma^\rk(v)\subseteq \sigma^\rk(v^\prime)$, we have $\sH(v)\subseteq\sH(v^\prime)$, and therefore $\mC$ is not Hamming-minimal.
\end{proof}

\begin{proof}[Second proof]
 Without loss of generality, we may assume that $\mC$ is nondegenerate. Let $G$ be a generator matrix for $\mC$, and let $\mB$ be the basis of the associated $\Fmk$ system $\mU$ formed by the columns of $G$. Then $\mM:=\{\langle u\rangle_{q^m} \st u\in\mB\}$ is a projective $[n,k]_{q^m}$ system in $\PG(k-1,q^m)$. By Hamming-minimality and Theorem \ref{thm:cuttingHammingminimal}, it is a cutting blocking set. Hence $\langle \PG(H,\Fm)\cap \mM\rangle=\PG(H,\Fm)$ for every $\Fm$-hyperplane $H$ of $\Fm^k$. Let $H$ be an $\Fm$-hyperplane  of $\Fm^k$ and let 
 $$V:=\langle H\cap \mU \rangle=\langle H\cap \langle\mB\rangle_{\Fq} \rangle.$$
 Then $$\PG(V,\Fm)=\langle \PG(H,\Fm) \cap L_{\mU}\rangle\supseteq\langle \PG(H,\Fm)\cap \mM\rangle=\PG(H,\Fm),$$
 showing that $V=H$,
 We conclude by applying Proposition \ref{prop:cutting}.
\end{proof}

\begin{remark}\label{rem:rankM_notHM}
 The converse of Proposition \ref{prop:HMnew} is false in general.
 For example, let $(q,m,n)=(2,3,4)$. Write $\F_{8}=\F_2[\alpha]$, where $\alpha^3+\alpha+1=0$.
The code generated by
$$G= \begin{pmatrix}
1 & 0 & 0 & 0 \\
0 & 1 & \alpha & \alpha^2
\end{pmatrix}$$ is rank-minimal but not Hamming-minimal.  Moreover, the code $\mC\cdot A$ is not Hamming-minimal for any $A\in \GL_3(2)$. Indeed, if this was the case, then there would exist a Hamming-minimal~$[4,2]_8$ code, which contradicts~\cite[Theorem 2.14]{alfarano2020three}.
\end{remark}

The previous results and examples show that 
minimality in the rank and in the Hamming metric gives rise to very different concepts. We now show that the correspondence $\mC \to \mC^\HH$ is more natural in this context, as it translates rank-minimality precisely into Hamming-minimality.

 \begin{theorem}\label{thm:HM_iff_RM}
   Let $\mC$ be a nondegenerate $[n,k,d]_{q^m/q}$ rank-metric code. Then $\mC$ is minimal if and only if $\C^{\HH}$ is Hamming-minimal.
 \end{theorem}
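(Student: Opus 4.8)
The plan is to reduce the statement to a single geometric equivalence and then settle it by intersecting the linear set $L_\mU$ with projective hyperplanes. Write $\mU := \Phi([\mC])$ for an $\Fmk$ system associated to $\mC$. On the rank side, Corollary~\ref{coro:1-1cutting} tells us that $\mC$ is minimal if and only if $\mU$ is a linear cutting blocking set, and by Proposition~\ref{prop:cutting} the latter is equivalent to $\langle H\cap\mU\rangle_{\Fm}=H$ for every $\Fm$-hyperplane $H$ of $\Fm^k$. On the Hamming side, by construction $\Phi^\HH([\mC^\HH])=(\Ext^\HH\circ\Phi)([\mC])=\Ext^\HH([\mU])=[(L_\mU,\mm_\mU)]$, so the point set underlying the projective system of $\mC^\HH$ is exactly $L_\mU$. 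Thus, once we know that Hamming-minimality of $\mC^\HH$ depends only on this point set (and not on the multiplicities $\mm_\mU$), Theorem~\ref{thm:cuttingHammingminimal} gives that $\mC^\HH$ is Hamming-minimal if and only if $L_\mU$ is a cutting blocking set in $\PG(k-1,q^m)$. The theorem then follows from the purely geometric equivalence that $\mU$ is a linear cutting blocking set if and only if $L_\mU$ is a cutting blocking set.

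To prove this equivalence I would fix an $\Fm$-hyperplane $H$ of $\Fm^k$; these are in bijection with the projective hyperplanes $\PG(H,\Fm)$ of $\PG(k-1,q^m)$. The key computation is that a point $\langle u\rangle_{\Fm}$ of $L_\mU$ lies in $\PG(H,\Fm)$ exactly when $u\in H$, so that $L_\mU\cap\PG(H,\Fm)=L_{\mU\cap H}$, the linear set of the $\Fq$-subspace $\mU\cap H$. Taking projective spans yields $\langle L_\mU\cap\PG(H,\Fm)\rangle=\PG(\langle\mU\cap H\rangle_{\Fm},\Fm)$, which equals $\PG(H,\Fm)$ if and only if $\langle\mU\cap H\rangle_{\Fm}=H$. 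Since this must hold for every hyperplane, $L_\mU$ is a cutting blocking set precisely when $\langle H\cap\mU\rangle_{\Fm}=H$ for all $H$, which by Proposition~\ref{prop:cutting} is exactly the condition for $\mU$ to be a linear cutting blocking set. Note that the cutting condition forces each such intersection to be nonempty, so $L_\mU$ is automatically a blocking set, as required by the definition.

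The main obstacle I anticipate is not the geometric computation, which is short, but rather justifying the reduction to the bare point set $L_\mU$ when invoking Theorem~\ref{thm:cuttingHammingminimal}: the projective system $(L_\mU,\mm_\mU)$ carries nontrivial multiplicities, whereas a cutting blocking set records none. One has to observe that Hamming-minimality is insensitive to repeated columns, since for two codewords $uG_{\Ext}$ and $u'G_{\Ext}$ of $\mC^\HH$ the support inclusion $\sH(uG_{\Ext})\subseteq\sH(u'G_{\Ext})$ is governed solely by which points of $L_\mU$ are \emph{not} annihilated by $u$ versus $u'$ (equivalently, by the positions of $L_\mU$ relative to the hyperplanes $\langle u\rangle^\perp$ and $\langle u'\rangle^\perp$), and hence is unchanged if every multiplicity is collapsed to $1$. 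With this observation in hand, Theorem~\ref{thm:cuttingHammingminimal} applies to $L_\mU$ and the chain of equivalences closes.
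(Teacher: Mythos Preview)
Your proposal is correct and follows essentially the same route as the paper: reduce rank-minimality to $\mU$ being a linear cutting blocking set via Corollary~\ref{coro:1-1cutting}, reduce Hamming-minimality of $\mC^\HH$ to $L_\mU$ being a cutting blocking set via Theorem~\ref{thm:cuttingHammingminimal}, and then bridge the two by the identity $L_\mU\cap\PG(H,\Fm)=L_{\mU\cap H}$ together with $\langle L_{\mU\cap H}\rangle=\PG(\langle\mU\cap H\rangle_{\Fm},\Fm)$. The paper does exactly this, phrasing the span identity as $L_{\langle S\rangle_{\Fm}}=\langle L_S\rangle$; your added remark that the multiplicities $\mm_\mU$ are irrelevant for the cutting condition is a point the paper leaves implicit.
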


\begin{proof}
 By Corollary~\ref{coro:1-1cutting}, $\mC$ is minimal if and only if any   $\Fmk$ system $\mU$ associated with $\mC$ is a linear cutting blocking set.  Now, consider the linear set $L_{\mU}$. We show that $\mU$ is a linear cutting blocking set if and only if $L_{\mU}$ is a cutting blocking set in $\PG(k-1,q^m)$. Let $H$ be an $\Fm$-hyperplane of $\Fm^k$, then $L_{\mU}\cap \PG(H,\Fm)=L_{\mU}\cap L_H=L_{\mU \cap H}$, and hence
 $$ \langle L_{\mU}\cap \PG(H,\Fm) \rangle =\langle L_{\mU}\cap L_H\rangle=\langle L_{\mU \cap H} \rangle. $$
Moreover,  for every subset $S\subseteq\Fm^k$, one has $\smash{L_{\langle S\rangle_{\Fm}}=\langle L_S \rangle}$. This implies that $L_{\mU}$ is a cutting blocking set in $\PG(k-1,q^m)$ if and only if for every $\Fm$-hyperplane $H$ of $\Fm^k$ we have $\smash{L_{\langle \mU \cap H\rangle_{\Fm}} =L_{H}}$. Since $H$ and $\langle \mU \cap H\rangle_{\Fm}$ are both $\Fm$-linear, the linear set that they define coincide with the respective projective subspaces. Therefore $L_{\mU}$ is a cutting blocking set in $\PG(k-1,q^m)$ if and only if $\langle H\cap \mU\rangle_{\Fm}=H$ for every $\Fm$-hyperplane $H$ in $\Fm^k$, as claimed. We conclude using Theorem \ref{thm:cuttingHammingminimal}  -- which states that a linear code is Hamming-minimal if and only if the associated projective system is a cutting blocking set -- and observing that, by definition,~$L_{\mU}$ is the projective system associated to $\C^{\HH}$.
\end{proof}

Theorem \ref{thm:HM_iff_RM} allows us to transfer results known for minimal codes in the Hamming metric to the rank metric setting. 
For example, the following is the rank-metric analogue of the characterization in \cite[Theorem 11]{MR3857591}.

\begin{theorem}
Let $\mathcal{C}$ be an $\Fmk$ code. Then $\mathcal{C}$ is minimal if and only if
\[\sum_{\lambda\in {\F_{q^m}} \setminus \{0\}}
q^{-\rk(c+\lambda c')} \neq  (q^m-1)\cdot q^{-\rk(c)}-q^{-\rk(c')}+1\]
for all linearly independent $c,c'\in \mathcal{C}$.
\end{theorem}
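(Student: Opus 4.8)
The plan is to reduce the statement to a pointwise criterion on pairs and then to establish a single clean counting identity. First I would observe that minimality (Definition~\ref{def:mincodes}) and every term in the displayed formula depend only on rank weights and on inclusions among rank supports, all of which are intrinsic invariants; hence by Remark~\ref{rem:effectivelength} I may assume $\mC$ is nondegenerate and fix a generator matrix $G$ with associated $\Fmk$ system $\mU$. After this reduction, $\mC$ fails to be minimal exactly when there are linearly independent codewords $c,c'$ with $\sigma^\rk(c')\subseteq\sigma^\rk(c)$. So it suffices to prove, for each fixed pair of linearly independent codewords $c=uG$ and $c'=u'G$, the equivalence
\[
\sum_{\lambda\in\Fm^*} q^{-\rk(c+\lambda c')} = (q^m-1)q^{-\rk(c)}-q^{-\rk(c')}+1 \quad\Longleftrightarrow\quad \sigma^\rk(c')\subseteq\sigma^\rk(c),
\]
since the theorem then follows by aggregating over all pairs and taking contrapositives.

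The heart of the argument is a geometric evaluation of the left-hand sum. Applying Lemma~\ref{lem:rkvG} to $c+\lambda c'=(u+\lambda u')G$ (noting $u+\lambda u'\neq 0$ for all $\lambda$), each summand becomes $q^{-\rk(c+\lambda c')}=q^{-n}\,|\mU\cap\langle u+\lambda u'\rangle^\perp|$. As $\lambda$ runs over $\Fm^*$, the points $\langle u+\lambda u'\rangle$ are the $q^m-1$ points of the projective line $\PG(\langle u,u'\rangle,\Fm)$ other than $\langle u\rangle$ and $\langle u'\rangle$; dually, the hyperplanes $\langle u+\lambda u'\rangle^\perp$ are precisely the $q^m+1$ hyperplanes through the codimension-$2$ space $L:=\langle u,u'\rangle^\perp$, with the two hyperplanes $\langle u\rangle^\perp$ and $\langle u'\rangle^\perp$ deleted. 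I would then compute the full sum $\sum_{H\supseteq L}|\mU\cap H|$ by the double-counting of Lemma~\ref{lem:st-eq}: a nonzero $w\in\mU$ lies in all $q^m+1$ of these hyperplanes if $w\in L$ and in exactly one of them otherwise, which yields $\sum_{H\supseteq L}|\mU\cap H|=q^{m+s}+q^n$, where $s:=\dim_{\Fq}(\mU\cap L)$.

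Subtracting the two excluded terms $|\mU\cap\langle u\rangle^\perp|=q^{n-\rk(c)}$ and $|\mU\cap\langle u'\rangle^\perp|=q^{n-\rk(c')}$ (again via Lemma~\ref{lem:rkvG}) and multiplying by $q^{-n}$ gives the closed form
\[
\sum_{\lambda\in\Fm^*} q^{-\rk(c+\lambda c')} = q^{m+s-n}+1-q^{-\rk(c)}-q^{-\rk(c')}.
\]
Comparing this with the prescribed right-hand side $(q^m-1)q^{-\rk(c)}-q^{-\rk(c')}+1=q^{m-\rk(c)}+1-q^{-\rk(c)}-q^{-\rk(c')}$ and cancelling common terms, the displayed equation holds if and only if $q^{m+s-n}=q^{m-\rk(c)}$, i.e. $s=n-\rk(c)=\dim_{\Fq}(\mU\cap\langle u\rangle^\perp)$. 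Since $\mU\cap L=\mU\cap\langle u\rangle^\perp\cap\langle u'\rangle^\perp$ is contained in $\mU\cap\langle u\rangle^\perp$, this dimension equality is equivalent to $\mU\cap\langle u\rangle^\perp\subseteq\mU\cap\langle u'\rangle^\perp$, which by Theorem~\ref{thm:revInclusion} is exactly $\sigma^\rk(c')\subseteq\sigma^\rk(c)$. This closes the pointwise equivalence and hence the theorem.

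I expect no deep obstacle: the computation is routine once the setup is fixed, and the only delicate points are bookkeeping. I would take care that the $q^m-1$ points are distinct and exclude precisely $\langle u\rangle,\langle u'\rangle$, that Lemma~\ref{lem:rkvG} applies to every $u+\lambda u'$, and that the final algebra matches the \emph{asymmetric} right-hand side (the roles of $c$ and $c'$ genuinely differ, mirroring the directed nature of $\sigma^\rk(c')\subseteq\sigma^\rk(c)$). It is also worth recording that $s\le n-\rk(c)$ always, so the left-hand sum never exceeds the right-hand side; the equation is thus an extremal case, and minimality amounts to this extremum never being attained.
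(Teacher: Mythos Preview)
Your proof is correct and complete; the geometric double-count over hyperplanes through $L=\langle u,u'\rangle^\perp$ is clean, and the translation via Lemma~\ref{lem:rkvG} and Theorem~\ref{thm:revInclusion} is exactly right.

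However, your route is genuinely different from the paper's. The paper does not compute anything directly in the rank metric: it invokes Theorem~\ref{thm:HM_iff_RM} to say that $\mC$ is rank-minimal if and only if the associated Hamming-metric code $\mC^\HH$ is Hamming-minimal, then cites an existing Hamming-metric characterization of minimality (from~\cite{MR3857591}) and pushes it back through the weight correspondence~\eqref{eq:weightcorresp}. In other words, the paper treats this theorem as a corollary of the $\mC\mapsto\mC^\HH$ machinery, while you give a self-contained argument entirely inside the $q$-system picture. Your approach has the advantage of being independent of the external Hamming-metric reference and of making transparent why the identity is an extremal one (the inequality $s\le n-\rk(c)$ you noted). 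The paper's approach, by contrast, is a two-line proof that advertises the strength of the rank-to-Hamming correspondence developed in Section~\ref{sec:4}.
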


\begin{proof}
By Theorem \ref{thm:HM_iff_RM}, $\mathcal{C}$ is rank-minimal if and only if any associated-Hamming metric code~$\C^{\HH}$ is Hamming-minimal. We can now conclude by using
 \cite[Theorem 11]{MR3857591} and \eqref{eq:weightcorresp}.
\end{proof}

\begin{remark}
 It is natural to ask if the best known criterion for Hamming-minimality, namely the \textit{Ashikhmin-Barg condition} of \cite[Lemma 2.1]{ashikhmin1998minimal}, can be transferred to the rank-metric context. 
 The mentioned result states that  every $[n,k,d]_{q^m}$ code satisfying $w_{\max}(q^m-1)<q^md$ is Hamming-minimal, where $w_{\max}$ denotes the maximum Hamming weight of a codeword.

 One may naturally try to use Ashikhmin-Barg condition together with Theorem \ref{thm:HM_iff_RM} and Theorem \ref{thm:weightdistribution_H_R} to  obtain a sufficient condition rank-minimality. This can be done as follows. 
 
 Let~$\C$ be a nondegenerate $\Fmkd$ code.
 By Corollary~\ref{coro:lowerboundlength}, we may assume without loss of generality that $n \geq m$. By Proposition \ref{prop:max},  the maximum rank of a codeword in~$\C$ is~$m$. Now consider the associated Hamming-metric code $\C^{\HH}$. Using Theorem  \ref{thm:weightdistribution_H_R} we see that the minimum distance of $\C^{\HH}$ is $(q^n-q^{n-d})(q-1)^{-1}$ and that the maximum Hamming weight of a codeword in $\C^{\HH}$ is  $(q^n-q^{n-m})(q-1)^{-1}$. Therefore imposing the Ashikhmin-Barg condition yields the following:
 A nondegenerate $\Fmkd$ code is rank-minimal
 if  
 \begin{equation}\label{eq:AB_rank} (q^n-q^{n-m})(q^m-1)<q^m(q^n-q^{n-d}).
 \end{equation}
 However, it is not difficult to see that \eqref{eq:AB_rank} is only satisfied when $d=m$, that is, when $\C$ is the $[km,k,m]_{q^m/q}$ simplex code; see Proposition~\ref{prop:simplexcode}. In other words, the rank metric analogue of the Ashikhmin-Barg condition is trivial. 
\end{remark}

\bigskip

%%%%%%%%%%%%%%%%%%%%%%%

\section{Minimal Rank-Metric Codes: Existence and Constructions}
\label{sec:6}

In this second section on minimal rank-metric codes we turn to their existence and constructions. In particular, in the light of the geometric characterization of Corollary~\ref{coro:1-1cutting} and of the lower bound of Corollary~\ref{coro:lowerboundlength}, we investigate the existence of short minimal codes.
We start by showing some simple examples of minimal codes. Then we construct a family of 3-dimensional minimal codes using scattered linear sets, and establish the existence of minimal rank-metric codes for all $n \ge 2k+m-2$ using a counting argument. The last part of this section is devoted to a new parameter of rank-metric codes, which we call the \textit{linearity index} and use to investigate further the structure of minimal codes.

\subsection{First Examples of Minimal Rank-Metric Codes}
A natural question is whether a simplex rank-metric code is minimal or not. Indeed, 
in the Hamming-metric simplex codes are among the simplest and best known minimal codes.

\begin{theorem}
 Let $\mC$ be a $[km,k,m]_{q^m/q}$ simplex rank-metric code. Then $\mC$ is minimal.
\end{theorem}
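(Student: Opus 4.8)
The plan is to use the geometric characterization of minimality provided by Corollary~\ref{coro:1-1cutting}, which reduces the problem to checking that an associated $q$-system is a linear cutting blocking set. First I would recall from Proposition~\ref{prop:simplexcode} that a $[km,k,m]_{q^m/q}$ simplex rank-metric code $\mC$ is nondegenerate, so that the correspondence of Theorem~\ref{thm:1-1} applies, and that the $\Fmk$ system associated to $\mC$ is the whole space $\mU = \Fm^k$ (this is exactly the content of implication $(1)\Rightarrow(2)$ in Proposition~\ref{prop:simplexcode}).

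By Corollary~\ref{coro:1-1cutting}, $\mC$ is minimal precisely when $\mU$ is a linear cutting blocking set, and by Proposition~\ref{prop:cutting} this amounts to verifying that $\langle H\cap \mU\rangle_{\Fm}=H$ for every $\Fm$-hyperplane $H$ of $\Fm^k$. Since $\mU=\Fm^k$ contains every such $H$, we have $H\cap\mU=H$, and therefore $\langle H\cap\mU\rangle_{\Fm}=\langle H\rangle_{\Fm}=H$ trivially. Hence $\mU$ is a linear cutting blocking set and $\mC$ is minimal.

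The substance of the argument is thus the identification of the correct $q$-system rather than any computation: once one knows that the simplex code corresponds to $\mU=\Fm^k$, the cutting blocking set condition holds vacuously, so there is essentially no obstacle to overcome. As an alternative I could bypass the blocking-set language and argue directly through Theorem~\ref{thm:revInclusion}: with $\mU=\Fm^k$ the inclusion $\sigma^\rk(uG)\subseteq\sigma^\rk(vG)$ is equivalent to $\langle u\rangle^\perp\supseteq\langle v\rangle^\perp$, which for two $\Fm$-hyperplanes forces $\langle u\rangle_{\Fm}=\langle v\rangle_{\Fm}$, i.e. $u$ is an $\Fm$-multiple of $v$; this is exactly the statement that every codeword is minimal. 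Either route makes clear that the genuinely load-bearing step is simply recognizing $\Fm^k$ as the $q$-system of the simplex code.
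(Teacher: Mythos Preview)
Your proof is correct and follows essentially the same approach as the paper: identify the associated $q$-system of the simplex code as $\mU=\Fm^k$ via Proposition~\ref{prop:simplexcode}, then observe that $H\cap\Fm^k=H$ makes the linear cutting blocking set condition trivial. The only difference is that you spell out the intermediate citations (Proposition~\ref{prop:cutting}, Corollary~\ref{coro:1-1cutting}) and add an alternative direct argument via Theorem~\ref{thm:revInclusion}, whereas the paper condenses everything into two sentences.
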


\begin{proof}
 By the definition,
 any $[km,k]_{q^m/q}$ system associated to 
 $\mC$ is $\Fm^k$; see Proposition~\ref{prop:simplexcode}. The latter is clearly a  linear cutting blocking set, since $H\cap \Fm^k=H$ for each $\Fm$-hyperplane~$H$ of $\Fm^k$.
\end{proof}

The following criterion is a sufficiency result to have a minimal rank-metric code.

\begin{proposition}\label{prop:minimalk-1m+1}
Let $\mC$ be a nondegenerate $\Fmk$ code with $n\geq (k-1)m+1$. Then~$\mC$ is minimal.
\end{proposition}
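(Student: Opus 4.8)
The plan is to show that any $\Fmk$ system $\mU$ associated with $\mC$ is a linear cutting blocking set; the minimality of $\mC$ then follows immediately from Corollary~\ref{coro:1-1cutting}. By Proposition~\ref{prop:cutting}, it suffices to verify that $\langle H\cap \mU\rangle_{\Fm}=H$ for every $\Fm$-hyperplane $H$ of $\Fm^k$.

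Fix such an $H$. Since $\mC$ is nondegenerate, $\mU$ is an $n$-dimensional $\Fq$-subspace of $\Fm^k$, while $H$ has $\Fq$-dimension $m(k-1)$ and the ambient space $\Fm^k$ has $\Fq$-dimension $mk$. The heart of the argument is a dimension count: by the Grassmann identity over $\Fq$,
$$\dim_{\Fq}(H\cap \mU)\geq \dim_{\Fq}(H)+\dim_{\Fq}(\mU)-\dim_{\Fq}(\Fm^k)=m(k-1)+n-mk=n-m.$$
The hypothesis $n\geq (k-1)m+1$ then yields $\dim_{\Fq}(H\cap \mU)\geq (k-2)m+1$.

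To conclude, set $V:=\langle H\cap \mU\rangle_{\Fm}\subseteq H$. An $\Fq$-basis of $H\cap \mU$ is in particular a set of $\Fm$-generators of $V$, so $\dim_{\Fm}(V)\geq \lceil \dim_{\Fq}(H\cap \mU)/m\rceil \geq \lceil ((k-2)m+1)/m\rceil = k-1$. Since $V\subseteq H$ and $\dim_{\Fm}(H)=k-1$, this forces $V=H$, as required. Equivalently, one may argue by contradiction: if $V\subsetneq H$ then $\dim_{\Fm}(V)\leq k-2$, whence $\dim_{\Fq}(V\cap \mU)\leq \dim_{\Fq}(V)\leq m(k-2)$; but $H\cap \mU\subseteq V$ gives $V\cap \mU=H\cap \mU$, contradicting $\dim_{\Fq}(H\cap \mU)\geq (k-2)m+1>m(k-2)$.

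I do not anticipate any serious obstacle here: the entire statement reduces, through the established correspondence, to the single Grassmann inequality $\dim_{\Fq}(H\cap \mU)\geq n-m$ together with the elementary observation that this quantity exceeds $m(k-2)$ precisely when $n\geq (k-1)m+1$, which is exactly enough to force the $\Fm$-span of $H\cap \mU$ to fill up the hyperplane $H$. The only point deserving a word of care is that minimality and the linear-cutting-blocking-set property are invariant under equivalence, so it is harmless to fix one representative system $\mU$.
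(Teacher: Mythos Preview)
Your proof is correct and follows essentially the same route as the paper's: reduce via Proposition~\ref{prop:cutting} (and Corollary~\ref{coro:1-1cutting}) to showing $\langle H\cap\mU\rangle_{\Fm}=H$, bound $\dim_{\Fq}(H\cap\mU)\ge(k-2)m+1$ by the Grassmann formula, and conclude that the $\Fm$-span of $H\cap\mU$ has $\Fm$-dimension at least $k-1$. One small quibble: the clause ``an $\Fq$-basis of $H\cap\mU$ is a set of $\Fm$-generators of $V$, so $\dim_{\Fm}(V)\ge\lceil\dim_{\Fq}(H\cap\mU)/m\rceil$'' is a non sequitur as written (a generating set gives an \emph{upper} bound on dimension, not a lower one); the correct justification is simply that $H\cap\mU\subseteq V$ forces $\dim_{\Fq}(H\cap\mU)\le\dim_{\Fq}(V)=m\dim_{\Fm}(V)$, which is precisely what your alternative contradiction argument (and the paper's proof) uses.
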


\begin{proof}
 Let  $\mU$ be any  $\Fmk$ system corresponding to $\mC$ (up to equivalence) and let $H$ be an $\Fm$-hyperplane of $\Fm^k$. By Proposition \ref{prop:cutting}, we need to show that $\langle H\cap \mU\rangle_{\Fm}=H$. Since $H$ is also an $\Fq$-space,  we can compute the $\Fq$-dimension of $H\cap \mU$ as follows:
 \begin{align*} \dim_{\Fq}(H\cap \mU) & = \dim_{\Fq}(H)+\dim_{\Fq}(\mU)-\dim_{\Fq}(H+\mU) \\
 &= (k-1)m+n-\dim_{\Fq}(H+\mU) \\
 &\geq (k-1)m+(k-1)m+1-km \\
 &=(k-2)m+1.
 \end{align*}
 This implies that $\langle H\cap \mU\rangle_{\Fm}$ has $\Fm$-dimension strictly greater than $k-2$ and since it is  contained in $H$, it has to be equal to $H$. 
\end{proof}

Proposition \ref{prop:minimalk-1m+1} shows that every nondegenerate $[n,2]_{q^m/q}$ code with $n=m+1$ is minimal. This means that the bound of Corollary~\ref{coro:lowerboundlength} is sharp  for $k=2$. It is natural to ask if
the bound is sharp
for other values of $k$. We will show in Section \ref{sec:three_dim_minimal} that this happens also for $k=3$.

\subsection{Three-Dimensional Minimal Rank-Metric Codes}\label{sec:three_dim_minimal}
In this section we study minimal $[n,3]_{q^m/q}$ codes. In particular we prove that they exist for every $n\geq m+2$ under the assumption that $m\geq 4$. This also implies that for $k=3$ and $m \geq 4$ the bound of Corollary~\ref{coro:lowerboundlength} is sharp. 

The first result that we provide links the existence of scattered linear sets with $3$-dimensional minimal rank-metric codes.

\begin{theorem}\label{thm:scattered_implies_minimal}
 Let $\C$ be a nondegenerate $[n,3]_{q^m/q}$ code with $n \geq m+2$ and let  $\mU$ be any $[n,3]_{q^m/q}$ system corresponding to $\C$.  If $L_{\mU}$ is a scattered linear set, then $\C$ is a minimal rank-metric code.
\end{theorem}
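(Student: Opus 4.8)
The plan is to translate the conclusion into the language of linear cutting blocking sets and then exploit that $k=3$ forces hyperplanes to be very small. By Corollary~\ref{coro:1-1cutting}, the code $\C$ is minimal precisely when the associated $\Fmk$ system $\mU$ is a linear cutting blocking set, and by Proposition~\ref{prop:cutting} this is equivalent to the condition
\[
\langle H\cap\mU\rangle_{\Fm}=H \qquad \text{for every $\Fm$-hyperplane } H \text{ of } \Fm^3.
\]
So the entire task reduces to verifying this spanning property for an arbitrary $\Fm$-hyperplane $H$, which in the case $k=3$ is a $2$-dimensional $\Fm$-subspace of $\Fm^3$ (a line in $\PG(2,q^m)$).

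First I would obtain a lower bound on $\dim_{\Fq}(H\cap\mU)$ by the same Grassmann computation used in the proof of Proposition~\ref{prop:minimalk-1m+1}. Since $H$ is an $\Fq$-space of $\Fq$-dimension $2m$ and $\dim_{\Fq}(H+\mU)\le 3m$, we get
\[
\dim_{\Fq}(H\cap\mU)=\dim_{\Fq}(H)+\dim_{\Fq}(\mU)-\dim_{\Fq}(H+\mU)\ge 2m+n-3m=n-m\ge 2,
\]
where the last inequality uses the hypothesis $n\ge m+2$. The point of this step is that the intersection $H\cap\mU$ is forced to be at least $2$-dimensional over $\Fq$.

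The crux is now to combine this with the scattered hypothesis, and here is where the dimension $k=3$ is essential. The $\Fm$-span $\langle H\cap\mU\rangle_{\Fm}$ is an $\Fm$-subspace of $H$, hence of $\Fm$-dimension $0$, $1$, or $2$. If it were not all of $H$, it would be contained in a single $1$-dimensional $\Fm$-subspace $W=\langle w\rangle_{\Fm}$, so that $H\cap\mU\subseteq W\cap\mU$. But then $\dim_{\Fq}(H\cap\mU)\le \dim_{\Fq}(W\cap\mU)=\wt_{\mU}(\langle w\rangle_{\Fm})\le 1$, because a scattered linear set has $\wt_{\mU}(P)\le 1$ at every point $P$ of $\PG(2,q^m)$. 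This contradicts the bound $\dim_{\Fq}(H\cap\mU)\ge 2$ from the previous step, and therefore $\langle H\cap\mU\rangle_{\Fm}=H$. Applying Proposition~\ref{prop:cutting} and Corollary~\ref{coro:1-1cutting} then yields minimality. The main obstacle is conceptual rather than computational: one has to notice that scatteredness, which is a constraint on the \emph{weights of points}, bounds the intersection of $\mU$ with any line-span once $k=3$ collapses a proper subspace of a hyperplane to a single point; the two dimension estimates are otherwise routine.
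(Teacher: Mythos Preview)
Your argument is correct, and it takes a genuinely different route from the paper's. The paper passes through the associated Hamming-metric code $\mC^{\HH}$: using Proposition~\ref{prop:max} and Theorem~\ref{thm:weightdistribution_H_R} it bounds the Hamming weights of $\mC^{\HH}$, deduces that $|L_{\mU}\cap\langle v\rangle^\perp|\ge (q^{n-m}-1)/(q-1)\ge q+1$ for every hyperplane, and concludes that $L_{\mU}$ is a $(q+1)$-fold blocking set in $\PG(2,q^m)$, which for a projective plane forces it to be cutting; minimality then follows from Theorem~\ref{thm:HM_iff_RM}. Your proof stays entirely on the $q$-system side: the Grassmann bound gives $\dim_{\Fq}(H\cap\mU)\ge 2$, and the scattered hypothesis forbids this intersection from lying in a single $\Fm$-point, so it must $\Fm$-span the line~$H$. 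Your approach is shorter and more self-contained, avoiding the detour through $\mC^{\HH}$, the weight-distribution machinery, and the blocking-set fact for planes; the paper's approach, on the other hand, showcases the $\mC\mapsto\mC^{\HH}$ correspondence developed in Section~\ref{sec:4} and yields the slightly stronger quantitative statement that every line meets $L_{\mU}$ in at least $q+1$ points.
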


\begin{proof}
 Let $\C^{\HH}\in(\Psi^\HH\circ\Ext^\HH\circ\Phi)([\mC]) $ be any  Hamming-metric code associated with $\C$. By Theorem \ref{thm:HM_iff_RM}, $\C$ is rank-minimal if and only if  $\C^{\HH}$ is Hamming-minimal, which is in turn equivalent to the fact that $L_{\mU}$ is a cutting blocking set in $\PG(2,q^m)$. Consider now the multiplicity function associated to $L_{\mU}$ in the projective $[\frac{q^n-1}{q-1},k]_{q^m}$ system $\Ext^\HH(\mU)$. Since $L_{\mU}$ is scattered, this means that every point of $L_{\mU}$ has multiplicity $1$. Let $G$ be any generator matrix of $\C^{\HH}$, and let $v\in\Fm^3\setminus\{0\}$. Since by Proposition \ref{prop:max} the maximum rank of a codeword in $\C$ is $m$, using Theorem \ref{thm:weightdistribution_H_R} we get
 $$ \wt^{\HH}(vG)\leq \frac{q^{n}-q^{n-m}}{q-1}.$$
 Thus,
 \begin{align*}|L_\mU \cap \langle v\rangle^{\perp}|&=\frac{q^n-1}{q-1}-\wt^{\HH}(vG) 
  \geq \frac{q^{n-m}-1}{q-1} \geq q+1.
 \end{align*}
 In particular, $L_{\mU}$ is a $(q+1)$-fold blocking set, and in $\PG(2,q^m)$ this also implies that $L_{\mU}$ is cutting. 
\end{proof}

Thanks to Theorem  \ref{thm:scattered_implies_minimal}, 
the existence of 
certain minimal rank-metric codes reduces to the existence of certain scattered linear sets.
There is a well known upper bound on the parameters of these objects, due to Blokhuis and Lavrauw; see \cite{blokhuis2000scattered}. If $\mU$ is a  $[n,k]_{q^m/q}$ system such that $L_{\mU}$ is scattered, then
\begin{equation}\label{eq:upper_bound_scattered}
    n\leq \frac{km}{2}.
\end{equation}
In this context, much progress has been made in the study of \textit{maximum scattered linear sets}, which are linear sets whose parameters meet the bound in \eqref{eq:upper_bound_scattered}  with equality. A construction of such linear sets was first provided by Blokhuis and Lavrauw for $k$ even; see~\cite{blokhuis2000scattered}.
When instead $k$ is odd and $m$ is even, a construction of linear sets meeting~\eqref{eq:upper_bound_scattered} for infinitely many parameters was given by Bartoli, Giulietti, Marino and Polverino in \cite[Theorem~1.2]{bartoli2018maximum}. The picture was then completed by Csajb\'ok, Marino, Polverino and Zullo; see~\cite{csajbok2017maximum}.

\begin{theorem}[see \textnormal{\cite[Theorem 2.4]{csajbok2017maximum}}]\label{thm:max_scattrered}
 Assume that $km$ is even. Then there exists a $[\frac{km}{2},k]_{q^m/q}$ system such that $L_{\mU}$ is scattered. 
\end{theorem}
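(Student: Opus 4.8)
The plan is to split the argument according to the parity of $k$, using the elementary observation that $km$ is even precisely when either $k$ is even, or $k$ is odd and $m$ is even. In each case I would exhibit an $\Fq$-subspace $\mU \subseteq \Fm^k$ with $\dim_{\Fq}\mU = km/2$ and check the two required properties: that $\mU$ spans $\Fm^k$ over $\Fm$ (so that it is a genuine $q$-system), and that $L_\mU$ is scattered, i.e. $\wt_{\mU}(P) \le 1$ for every $P \in \PG(k-1,q^m)$.

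For $k$ even I expect the construction to be completely elementary, via field reduction from the $k=2$ example. Write $k = 2\ell$ and fix an $\Fm$-basis of $\F_{q^{m\ell}}$, identifying $\F_{q^{m\ell}}$ with $\Fm^\ell$; this yields an $\Fm$-isomorphism $\Fm^k = \Fm^{2\ell} \cong \F_{q^{m\ell}}^2$. Set
\[ \mU := \{ (x, x^q) \st x \in \F_{q^{m\ell}} \} \subseteq \F_{q^{m\ell}}^2, \]
so that $\dim_{\Fq}\mU = m\ell = km/2$. A short root count shows that for $v = (a,b)$ the intersection $\mU \cap v\F_{q^{m\ell}}$ is at most $1$-dimensional over $\Fq$: when $a\neq 0$ the nonzero $\lambda$ with $(\lambda a, \lambda^q a^q) = (\lambda a, \lambda b)$ satisfy $\lambda^{q-1} = b a^{-q}$, and these form a single $\Fq^*$-coset, whence $x = \lambda a$ ranges over a $1$-dimensional $\Fq$-space. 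Thus $\mU$ is scattered in $\PG(1,q^{m\ell})$. Since $\Fm \subseteq \F_{q^{m\ell}}$, every $\Fm$-line $v\Fm$ sits inside the $\F_{q^{m\ell}}$-line $v\F_{q^{m\ell}}$, so $\dim_{\Fq}(\mU \cap v\Fm) \le \dim_{\Fq}(\mU \cap v\F_{q^{m\ell}}) \le 1$; hence $L_\mU$ is scattered in $\PG(k-1,q^m)$ as well. Finally, if $\mU$ spanned only an $\Fm$-subspace of dimension $d < k$, then $L_\mU$ would be a scattered linear set of rank $km/2$ inside $\PG(d-1,q^m)$, contradicting the Blokhuis--Lavrauw bound \eqref{eq:upper_bound_scattered}, which gives $km/2 \le dm/2$ and forces $d = k$. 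Hence $\mU$ is an $[km/2,k]_{q^m/q}$ system with $L_\mU$ scattered.

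In the remaining case ($k$ odd, $m$ even) the one-line reduction is no longer available, and here I would invoke the explicit construction of Csajb\'ok, Marino, Polverino and Zullo \cite[Theorem 2.4]{csajbok2017maximum} (completing \cite[Theorem 1.2]{bartoli2018maximum}), which produces a maximum scattered $\Fq$-subspace of dimension $km/2$ realized as the graph $\{(x, f(x))\}$ of a suitable linearized ($q$-)polynomial $f$. Reading their result through the correspondence ``maximum scattered subspace $\leftrightarrow$ $q$-system whose associated linear set is scattered'' yields exactly the desired $[km/2,k]_{q^m/q}$ system, with spanning following as above from \eqref{eq:upper_bound_scattered}. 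I expect the genuine obstacle to lie precisely in the scatteredness of such an $f$: it reduces to showing that for every $\lambda \in \Fm$ the $\Fq$-linear map $x \mapsto f(x) - \lambda x$ has kernel of $\Fq$-dimension at most $1$, equivalently that the associated linearized polynomial has at most $q$ roots. This root-counting is the nontrivial computation carried out in the cited works, and it is what makes the odd-$k$ case substantially harder than the field-reduction argument above.
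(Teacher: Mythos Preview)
The paper does not give its own proof of this theorem: it is stated as a cited result from \cite{csajbok2017maximum}, with the preceding paragraph recording exactly the same parity split you use (the $k$ even case due to Blokhuis--Lavrauw, the $k$ odd and $m$ even case completed by Bartoli--Giulietti--Marino--Polverino and Csajb\'ok--Marino--Polverino--Zullo). Your proposal is therefore fully aligned with the paper's treatment, and in fact goes further by spelling out the pseudoregulus construction $\{(x,x^q)\}$ and the field-reduction argument in the even-$k$ case; the spanning check via the Blokhuis--Lavrauw bound~\eqref{eq:upper_bound_scattered} is a nice touch and correct.
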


When $km$ is odd, not much is known yet. One of the few existence results on the maximum rank of a scattered linear set is the following, due to Blokhuis and Lavrauw.

\begin{theorem}[see \textnormal{\cite[Theorem 4.4]{blokhuis2000scattered}}]\label{thm:existence_scattered}
 Let $k,m$ be positive integers and $q$ be a prime power. There  exists an $[ab,k]_{q^m/q}$ system such that $L_{\mU}$ is scattered, whenever $a$ divides $k$, $\gcd(a,m)=1$ and
 $$ ab<\begin{cases}\frac{km-k+3}{2} & \mbox{ if } q=2 \mbox{ and } a=1,\\
 \frac{km-k+a+3}{2} & \mbox{ otherwise.}
 \end{cases}$$
\end{theorem}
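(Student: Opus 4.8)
The statement is an existence result, so the plan is to produce a scattered $q$-system $\mU\subseteq\Fm^k$ with $\dim_{\Fq}(\mU)=ab$. Recall from the discussion preceding Lemma~\ref{lem:linear_set_multiplicity} that $L_{\mU}$ is scattered precisely when $\wt_{\mU}(P)=1$ for every $P\in L_{\mU}$, equivalently when every $1$-dimensional $\Fm$-subspace $S=\langle v\rangle_{\Fm}$ meets $\mU$ in an $\Fq$-space of dimension at most $1$. Thus the goal reduces to exhibiting an $ab$-dimensional $\Fq$-subspace of $\Fm^k$, spanning over $\Fm$, none of whose $\Fm$-lines carries two $\Fq$-independent vectors of $\mU$.

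The device that uses the hypotheses $a\mid k$ and $\gcd(a,m)=1$ is to search for $\mU$ among $\F_{q^a}$-subspaces. Since $a\mid k$ we have $\F_{q^a}\subseteq\F_{q^{am}}\subseteq\F_{q^{km}}$, so identifying $\Fm^k$ with $\F_{q^{km}}$ (a Desarguesian model, with the $\Fm$-lines as the spread) equips the ambient space simultaneously with an $\Fm$-structure of dimension $k$ and an $\F_{q^a}$-structure of dimension $km/a$. The role of $\gcd(a,m)=1$ is that $\F_{q^a}\cap\Fm=\Fq$; hence for any nonzero $v$ the $\F_{q^a}$-line $v\F_{q^a}$ \emph{already} meets each $\Fm$-line in an $\Fq$-space of dimension $\le 1$ (if $v\alpha=\mu v\alpha'$ with $\alpha,\alpha'\in\F_{q^a}$ and $\mu\in\Fm$, then $\mu=\alpha/\alpha'\in\F_{q^a}\cap\Fm=\Fq$). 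Consequently an $\F_{q^a}$-subspace fails to be scattered only through a pair of distinct $\F_{q^a}$-lines sharing an $\Fm$-line, i.e.\ through a $2$-dimensional $\F_{q^a}$-subspace of the shape $B=\langle w,\mu w\rangle_{\F_{q^a}}$ with $w\neq 0$ and $\mu\in\Fm\setminus\Fq$. This is the key reduction: the hypotheses cut the ``bad'' configurations down from arbitrary $2$-dimensional $\Fq$-spaces to the rarer special $2$-dimensional $\F_{q^a}$-spaces.

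With this in hand I would run a counting (union-bound) argument over the lattice of $\F_{q^a}$-subspaces. Writing $Q=q^a$ and $M=km/a$, I would (i) count the bad blocks $B$, of which there are at most $\frac{q^{km}-1}{q^a-1}\cdot\frac{q^m-q}{q-1}$, by choosing the $\F_{q^a}$-line $w\F_{q^a}$ and then the companion line $\mu w\F_{q^a}$; (ii) bound the number of $b$-dimensional $\F_{q^a}$-subspaces through a fixed $B$ by $\binom{M-2}{b-2}_{Q}$; and (iii) compare with the total $\binom{M}{b}_{Q}$ via the identity $\binom{M}{b}_Q/\binom{M-2}{b-2}_Q=(Q^M-1)(Q^{M-1}-1)/((Q^b-1)(Q^{b-1}-1))$ together with $Q^M=q^{km}$. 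Discarding the few non-spanning subspaces yields a scattered $\F_{q^a}$-subspace whenever $ab$ stays below an explicit threshold, and a final check that $\mU$ spans $\Fm^k$ (arrangeable, e.g.\ by anchoring the construction at the scattered subgeometry $\F_{q^a}$ and extending) completes the argument.

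The delicate point, and the main obstacle, is matching the \emph{exact} constant, namely the $-k$ (rather than $-m$) appearing in $\frac{km-k+a+3}{2}$. A crude union bound of the above type produces a threshold governed by $-m$, which is weaker as soon as $k<m$ and in particular fails to reach the maximum scattered dimension $km/2$ of~\eqref{eq:upper_bound_scattered} for small $k$ (e.g.\ $k=2$, where the optimum is attained by the classical pseudoregulus $\{(x,x^q)\st x\in\Fm\}$). Obtaining the sharp constant therefore requires refining steps (i)--(iii): either an inclusion--exclusion correcting the over-count of subspaces meeting several bad blocks, or, more in the spirit of the original argument of Blokhuis and Lavrauw, replacing the pure counting by an explicit inductive construction that extends the $k=2$ maximum scattered subspace block by block across the $k/a$ coordinates of $\F_{q^{km}}\cong\F_{q^{am}}^{k/a}$, tracking the collisions introduced at each step. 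I would attempt the inductive construction first and fall back on the counting bound to cover any residual parameter ranges.
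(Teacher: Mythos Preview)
The paper does not supply its own proof of this theorem: it is quoted as \cite[Theorem~4.4]{blokhuis2000scattered} and used purely as a black box in the existence argument for three-dimensional minimal codes (Theorem~\ref{thm:scattered_implies_minimal} and the paragraph following it). There is therefore nothing in the paper to compare your proposal against.

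Regarding the proposal on its own terms: the overall strategy---identifying $\Fm^k$ with the field $\F_{q^{km}}$, restricting the search to $\F_{q^a}$-subspaces so that the hypotheses $a\mid k$ and $\gcd(a,m)=1$ force each $\F_{q^a}$-line to be automatically scattered with respect to the $\Fm$-spread, and then running a union bound over the ``bad'' $2$-dimensional $\F_{q^a}$-blocks $\langle w,\mu w\rangle_{\F_{q^a}}$---is indeed the mechanism behind the Blokhuis--Lavrauw argument. Your own diagnosis of the gap is accurate, though: the crude count you outline produces a threshold of the form $(km-m+O(1))/2$ rather than the stated $(km-k+a+3)/2$, and your proposal does not actually carry out either of the two suggested fixes (inclusion--exclusion, or an inductive extension across $\F_{q^{am}}$-blocks). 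So what you have is a correct framework together with an honest acknowledgement that the sharp constant is not yet reached; as written it is a plan, not a proof.
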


 In contrast with the most common line of research in the theory of scattered linear set, in this paper we are primarily interested in short nondegenerate minimal codes, and thus in linear sets with small rank. For this reason, we state the following  simple lemma, whose proof is omitted.

\begin{lemma}\label{lem:scattered_n_n-1}
 Let $\mU$ be an $[n,k]_{q^m/q}$ system such that $L_{\mU}$ is a scattered linear set. If $n>k$, then there exists an $[n-1,k]_{q^m/q}$ system $\mV\subseteq \mU$ such that $L_{\mV}$ is scattered.
\end{lemma}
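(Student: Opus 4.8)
The plan is to exploit two facts: first, that the scattered property of a linear set is inherited by every $\Fq$-subspace of a scattered $q$-system, and second, that the full $\Fm$-span of $\mU$ is already realized by a $k$-dimensional $\Fq$-subspace of $\mU$. Granting these, the whole statement reduces to deleting one $\Fq$-dimension from $\mU$ without destroying the $\Fm$-span, since scatteredness will then come for free.

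For the first fact, recall that $L_{\mU}$ is scattered precisely when $\wt_{\mU}(P)=\dim_{\Fq}(\mU\cap\langle u\rangle_{\Fm})=1$ for every $P=\langle u\rangle_{\Fm}\in L_{\mU}$. If $\mV\subseteq\mU$ is any $\Fq$-subspace, then $\mV\cap\langle u\rangle_{\Fm}\subseteq\mU\cap\langle u\rangle_{\Fm}$, so $\wt_{\mV}(P)\le\wt_{\mU}(P)\le 1$; since every point of $L_{\mV}$ has weight at least $1$ by definition, $L_{\mV}$ is automatically scattered. Thus I need only produce an $(n-1)$-dimensional $\mV\subseteq\mU$ that is still an $[n-1,k]_{q^m/q}$ system, i.e.\ with $\langle\mV\rangle_{\Fm}=\Fm^k$.

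To do this, fix an $\Fq$-basis $u_1,\dots,u_n$ of $\mU$. Since $\langle\mU\rangle_{\Fm}=\Fm^k$, these $n$ vectors span $\Fm^k$ over $\Fm$, so one can extract from them an $\Fm$-basis of $\Fm^k$; after reindexing, assume $u_1,\dots,u_k$ is such a basis. Because $n>k$, we have $n-1\ge k$, so setting $\mV:=\langle u_1,\dots,u_{n-1}\rangle_{\Fq}$ yields an $\Fq$-subspace of $\mU$ of dimension $n-1$ that contains $u_1,\dots,u_k$; hence $\langle\mV\rangle_{\Fm}\supseteq\langle u_1,\dots,u_k\rangle_{\Fm}=\Fm^k$, and $\mV$ is an $[n-1,k]_{q^m/q}$ system. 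By the first fact $L_{\mV}$ is scattered, completing the argument.

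There is essentially no obstacle in this proof: the only point requiring care is that one must delete a basis vector which is \emph{redundant} for the $\Fm$-span, and the hypothesis $n>k$ guarantees exactly this by ensuring that at least one $\Fq$-basis vector of $\mU$ lies outside a chosen $\Fm$-basis extracted from $\mU$. This is why the authors felt safe omitting the proof.
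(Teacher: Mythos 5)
Your proof is correct. The paper explicitly omits the proof of this lemma (it is stated as a ``simple lemma, whose proof is omitted''), so there is no written argument to compare against; your two-step reasoning --- weights can only decrease when passing to an $\Fq$-subspace, so every subspace of a scattered system is scattered, and one can keep the full $\Fm$-span by extracting an $\Fm$-basis of $\Fm^k$ from an $\Fq$-basis of $\mU$ and discarding one of the $n-k\geq 1$ redundant basis vectors --- is exactly the natural argument the authors intended to leave to the reader, and both steps are carried out correctly.
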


%\begin{proof}
%\textcolor{red}{ trivial} \ALE{Non la metto, ok?}
%\end{proof}

We conclude this subsection by combining the previous three results with each other. This yields the following existence theorem for $3$-dimensional minimal rank-metric codes.

\begin{theorem}
  Suppose that $m\not\equiv 3,5 \mod 6$ and $m\ge 4$. Then there exists a (nondegenerate) minimal $[m+2,3]_{q^m/q}$ code.
\end{theorem}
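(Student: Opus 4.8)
The plan is to reduce the statement to the existence of a single scattered linear set and then feed it into Theorem~\ref{thm:scattered_implies_minimal}. Concretely, by Theorem~\ref{thm:scattered_implies_minimal} combined with the correspondence of Theorem~\ref{thm:1-1}, it suffices to produce a scattered $[m+2,3]_{q^m/q}$ system $\mU$: the nondegenerate rank-metric code associated to $\mU$ is then automatically a minimal $[m+2,3]_{q^m/q}$ code, and nondegeneracy is free since a $q$-system spans $\Fm^3$ by definition. Note that the hypothesis $m\ge 4$ is exactly what makes this feasible against the scattered bound~\eqref{eq:upper_bound_scattered}, as $m+2\le \frac{3m}{2}$ is equivalent to $m\ge 4$.

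The construction splits according to the parity of $m$, and the first observation is that the congruence condition $m\not\equiv 3,5 \pmod 6$ means precisely that either $m$ is even or $m\equiv 1\pmod 6$. \textbf{Even case.} Here $km=3m$ is even, so Theorem~\ref{thm:max_scattrered} supplies a scattered $[\frac{3m}{2},3]_{q^m/q}$ system. Since $\frac{3m}{2}\ge m+2$, I would then apply Lemma~\ref{lem:scattered_n_n-1} exactly $\frac{m-4}{2}$ times (an integer $\ge 0$ because $m$ is even and $m\ge 4$) to descend to a scattered $[m+2,3]_{q^m/q}$ system.

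\textbf{Odd case} ($m\equiv 1\pmod 6$). Write $m=6t+1$ with $t\ge 1$. Then $\gcd(3,m)=1$ and, crucially, $m+2=3(2t+1)$ is itself a multiple of $3$. I would invoke Theorem~\ref{thm:existence_scattered} with $k=3$, $a=3$ and $b=2t+1$: the divisibility $a\mid k$ and coprimality $\gcd(a,m)=1$ both hold, while the required inequality $ab=m+2<\frac{3m+3}{2}=\frac{km-k+a+3}{2}$ reduces to $m>1$. This produces directly a scattered $[m+2,3]_{q^m/q}$ system, with no reduction step needed. In either case one has a scattered $[m+2,3]_{q^m/q}$ system, and Theorem~\ref{thm:scattered_implies_minimal} concludes.

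The only genuinely delicate point is the elementary number theory that isolates which odd residues mod $6$ let the Blokhuis--Lavrauw construction land inside the admissible window $[m+2,\tfrac{3m}{2}]$ on a value compatible with $a\mid k$. The residue $m\equiv 1\pmod 6$ is singled out because then $m+2$ is a multiple of $3$ sitting at the left end of that window, so $a=3$ works verbatim; by contrast $m\equiv 3\pmod 6$ violates $\gcd(3,m)=1$ and cannot use $a=3$, while $m\equiv 5\pmod 6$ is the borderline family (it already fails at $m=5$, where no multiple of $3$ lies in $[7,8]$) and is excluded to keep the hypothesis uniform. I expect verifying the bound and the reduction count in each branch to be routine once this case split is set up correctly.
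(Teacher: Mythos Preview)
Your proof is correct and follows essentially the same approach as the paper: reduce to the existence of a scattered $[m+2,3]_{q^m/q}$ system via Theorem~\ref{thm:scattered_implies_minimal}, handle the even case through Theorem~\ref{thm:max_scattrered} followed by repeated applications of Lemma~\ref{lem:scattered_n_n-1}, and handle the remaining case $m\equiv 1\pmod 6$ by applying Theorem~\ref{thm:existence_scattered} with $a=3$. Your parametrisation $m=6t+1$, $b=2t+1$ is exactly the paper's choice $m=3s+1$, $b=s+1$ specialised to odd $m$, so the two arguments coincide.
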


\begin{proof}
Observe that by Theorem \ref{thm:scattered_implies_minimal} it is enough to prove that there exists an $[m+2,3]_{q^m/q}$ system $\mU$ such that $L_{\mU}$ is scattered. 

 First, assume that $m$ is even. Then, by Theorem \ref{thm:max_scattrered}, we have that there exists a  $[\frac{3m}{2},3]_{q^m/q}$ system such that $L_{\mU}$ is scattered. Then, since $m+2\le \frac{3m}{2}$ whenever $m \ge 4$, using Lemma \ref{lem:scattered_n_n-1} we obtain the desired $[m+2,3]_{q^m/q}$ system.
 
 Now assume that $m$ is odd and $m\not\equiv 0 \mod 3$. Write $m=3s+i$. We use Theorem~\ref{thm:existence_scattered} with $a=3$ and $b=s+1$, which shows the existence of an 
$[\frac{m+3-i}{2},3]_{q^m/q}$ system $\mU$ such that~$L_{\mU}$ is scattered. If $m \equiv 1 \mod 3$, we get the desired result.
 \end{proof}

\begin{remark}
 In the remaining cases, finding scattered linear sets of rank $m+2$ in $\PG(2,q^m)$ seems in general a difficult task. For instance, when $m=5$, the existence of a $[7,3]_{q^5/q}$ system $\mU$ defining a scattered linear set  was recently shown in \cite[Theorem 5.1]{bartoli2020evasive}, but only in characteristic~$2$, $3$ and $5$ and under some  restriction on the field size.  
\end{remark}

We illustrate the construction of  minimal codes based on Theorem \ref{thm:scattered_implies_minimal} with an explicit  example.
\begin{example}
 We describe a construction of a $[6,3]_{q^4/q}$ system $\mU$ such that $L_{\mU}$ is a scattered linear set, which was proposed in \cite{BALL2000294}. First,  consider the finite field $\F_{q^{12}}=\F_{q^4}(\delta)=\Fq(\eta)$ and identify it with $\F_{q^4}^3$ by fixing the $\F_{q^4}$-basis $\{1,\delta,\delta^2\}$. Choose $\alpha, \beta \in \F_{q^{12}}$ such that 
 $$\begin{cases}
  \beta^{q^7+q^4-q^3+q}\neq -(\beta^{q^3}-\beta^{q^6+q^3+1})^{q-1}, \\
  \alpha^{q^3+1}=\beta^{q^3}-\beta^{q^6+q^3+1}\neq 0, \\
  \beta^{q^9+q^6+q^3+1}=1.
 \end{cases}
 $$
 Then the set 
 $$\mU=\left\{\gamma \in \F_{q^{12}} \st \gamma^{q^6}+\alpha\gamma^3+\beta=0 \right\}$$
 is a  $[6,3]_{q^4/q}$ system $\mU$ such that $L_{\mU}$ is a scattered linear set.
 
 More concretely, take $q=2$ and $\eta$ such that $\eta^{12}+\eta^7+\eta^6+\eta^5+\eta^3+\eta+1=0$. One can check that $\alpha=\eta^{64}$ and $\beta=\eta^7$ satisfy the properties above. We then have 
 $$\mU=\left\{\gamma \in \F_{2^{12}} \st \gamma^{64}+\alpha\gamma^3+\beta=0 \right\}=\langle \eta^{6}, \eta^{22}, \eta^{63}, \eta^{89}, \eta^{166}, \eta^{289} \rangle_{\F_2}.$$
 If we write $\F_{2^{12}}=\F_{16}(\eta)$ and $\F_{16}=\{0\} \cup \{\lambda^i \st 0 \leq i \leq 14\}$, where
 $\lambda=\eta^{273}$ satisfies  $\lambda^4+\lambda+1=0$, then we can express the generators of $\mU$ above
 in coordinates with respect to the $\F_{16}$-basis $\{1,\eta,\eta^2\}$ of $\F_{2^{12}}$. One of the  $[6,3]_{16/2}$ codes in $\Phi([\mU])$ is generated by the matrix
 $$G= \begin{pmatrix} \lambda^{4} & \lambda^{10} & \lambda^{8} & \lambda^{3} & \lambda^{9} & \lambda^{7} \\
 \lambda^{14} & \lambda^{8} & \lambda & \lambda^{8} & 0 & \lambda^{8} \\
 \lambda^{10} & 0 & \lambda^{6} & \lambda^{5} & \lambda^{11} & \lambda^{3} \end{pmatrix}.$$
 By Theorem \ref{thm:scattered_implies_minimal}, this code is minimal.
\end{example}

\subsection{Existence Results for Minimal Rank-Metric Codes}

In this subsection we establish a general existence result for minimal rank-metric codes. We prove that minimal rank-metric codes exist 
for all parameter sets $(n,m,k)$ with 
$m \ge 2$ and $n \ge 2k+m-2$ (and any $q$). Combining this with previous results, we then give parameter intervals
for which nondegenerate minimal codes exist and do not exist.

\begin{lemma} \label{lem:exist}
Let $m$, $n$, $k$ be positive integers and suppose 
$n \ge k \ge 2$. If
\begin{equation}\label{eq:existence}
    \frac{(q^{mn}-1)(q^{m(n-1)}-1)}{(q^{mk}-1)(q^{m(k-1)}-1)}- \frac{1}{2} \, \sum_{i=2}^m \frac{1}{q^m-1} \qbin{m}{i}{q} \, \prod_{j=0}^{i-1} (q^n-q^j) \left( \frac{q^{mi}-1}{q^m-1}-1  \right)
\end{equation}
is positive, then there exists a minimal $[n,k]_{q^m/q}$ code.
\end{lemma}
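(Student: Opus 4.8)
The plan is to prove the \emph{stronger} statement that the \emph{number} of minimal $[n,k]_{q^m/q}$ codes is positive, via a first-moment (expurgation) argument over all $k$-dimensional $\Fm$-subspaces $\mC\subseteq\Fm^n$. The total number of such codes is $\binom{n}{k}_{q^m}$, and I would bound the number of \emph{non-minimal} ones from above. The key reduction is that a code $\mC$ is non-minimal if and only if it contains a \emph{bad plane}, meaning a $2$-dimensional $\Fm$-subspace $\pi\subseteq\Fm^n$ that is itself non-minimal as a code: if $v'$ violates minimality against $v$ in $\mC$, then $\pi=\langle v,v'\rangle_{\Fm}$ is bad, and conversely any code containing a bad plane inherits the offending pair. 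Since the number of $k$-dimensional codes through a fixed plane is $\binom{n-2}{k-2}_{q^m}$, a union bound gives
\[
\#\{\text{minimal }[n,k]_{q^m/q}\text{ codes}\}\ \ge\ \binom{n}{k}_{q^m}-(\#\text{bad planes})\cdot\binom{n-2}{k-2}_{q^m}.
\]
Dividing by $\binom{n-2}{k-2}_{q^m}$ and using the standard subspace identity $\binom{n}{k}_{q^m}\binom{k}{2}_{q^m}=\binom{n}{2}_{q^m}\binom{n-2}{k-2}_{q^m}$, the first summand becomes exactly $\binom{n}{2}_{q^m}/\binom{k}{2}_{q^m}=\frac{(q^{mn}-1)(q^{m(n-1)}-1)}{(q^{mk}-1)(q^{m(k-1)}-1)}$, the first term in the statement. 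It therefore suffices to bound the number of bad planes by the second term.

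Next I would characterize bad planes precisely. Using Lemma~\ref{lem:supp} and Proposition~\ref{prop:supp_propr}, if $\sigma^\rk(v')\subseteq\sigma^\rk(v)$ then in fact \emph{every} codeword of $\pi=\langle v,v'\rangle_{\Fm}$ has rank support inside $V:=\sigma^\rk(v)$; hence $\sigma^\rk(\pi)=V$ and $\pi$ lies inside the $\Fm$-space $W_V:=\langle V\rangle_{\Fm}$ of all vectors with support in $V$, which has $\Fm$-dimension $i:=\dim_{\Fq}V$. Conversely, since minimality is invariant under the isometric embedding of Remark~\ref{rem:effectivelength}, a $2$-dimensional code of effective length $i$ is non-minimal exactly when $i\le m$ (apply Corollary~\ref{coro:lowerboundlength} to its nondegenerate model), while a $2$-dimensional code always has effective length $\ge 2$. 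Thus the bad planes are precisely the $2$-dimensional $\Fm$-subspaces $\pi$ with $2\le\dim_{\Fq}\sigma^\rk(\pi)=:i\le m$, and each such $\pi$ carries at least one codeword of rank exactly $i$ whose support equals $\sigma^\rk(\pi)$.

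I would then bound the number of bad planes by double counting ordered pairs of $\Fm$-lines. For fixed $i$, the number of vectors $v\in\Fm^n$ of rank exactly $i$ equals the number of $n\times m$ matrices over $\Fq$ of rank $i$, namely $\binom{m}{i}_q\prod_{j=0}^{i-1}(q^n-q^j)$; dividing by $q^m-1$ counts the $\Fm$-lines $\ell=\langle v\rangle$ with $\rk(v)=i$ (rank and support are scalar-invariant by Proposition~\ref{prop:prel}). For each such $\ell$, the lines $\ell'\ne\ell$ whose support lies in that of $\ell$ are exactly the remaining $\frac{q^{mi}-1}{q^m-1}-1$ lines of $W_{\sigma^\rk(\ell)}$. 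Each ordered pair $(\ell,\ell')$ spans a bad plane, and each bad plane $\pi$ is spanned this way by at least $q^m\ge 2$ ordered pairs: take $\ell$ to be any maximal-rank line of $\pi$, so $\sigma^\rk(\ell)=\sigma^\rk(\pi)$ contains every other line's support, and let $\ell'$ range over the $q^m$ remaining lines of $\pi$. Hence
\[
\#\text{bad planes}\ \le\ \frac12\sum_{i=2}^m\frac{1}{q^m-1}\,\binom{m}{i}_q\,\prod_{j=0}^{i-1}(q^n-q^j)\left(\frac{q^{mi}-1}{q^m-1}-1\right),
\]
which is exactly the second term. Combining this with the displayed lower bound finishes the proof, since positivity of (first term)$-$(second term) forces the number of minimal codes to be positive.

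The main obstacle I anticipate is the second step: establishing the structure of bad planes rigorously, in particular that a single violating pair forces \emph{all} codewords of the spanned plane into one common $i$-dimensional space $V$ (so $\pi\subseteq W_V$), together with the clean dimension range $2\le i\le m$. Once this is in place the counting is routine: identifying rank-$i$ vectors with rank-$i$ matrices supplies the factor $\binom{m}{i}_q\prod_{j=0}^{i-1}(q^n-q^j)$, and the ``at least $q^m$ ordered pairs per plane'' observation justifies the factor $\tfrac12$ (in fact any constant up to $q^m/2$ would serve, so the bound is comfortably valid). The only point requiring care is to check that this overcount always keeps the final inequality pointing in the right direction.
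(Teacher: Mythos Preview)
Your proposal is correct and follows the same expurgation strategy as the paper: bound the number of non-minimal $[n,k]_{q^m/q}$ codes from above by a union bound over ``witnesses of non-minimality'' and compare with $\qbin{n}{k}{q^m}$, factoring out $\qbin{n-2}{k-2}{q^m}$ to obtain the first term of~\eqref{eq:existence}. The one meaningful difference is that you take the witnesses to be bad \emph{planes} (two-dimensional non-minimal subcodes), whereas the paper takes them to be unordered pairs $\{x,y\}\subseteq\mQ$ with nested rank supports. Your route is in fact the more robust one: once you note (via Proposition~\ref{prop:max} applied to the nondegenerate model) that a bad plane $\pi$ of effective length $i\le m$ contains a line $\ell$ with $\sigma^\rk(\ell)=\sigma^\rk(\pi)$, the $q^m$ ordered pairs $(\ell,\ell')$ with $\ell'\ne\ell$ in $\pi$ give $\#\{\text{bad planes}\}\le |S|/q^m\le |S|/2$, where $|S|$ denotes the ordered-pair sum appearing in~\eqref{eq:existence}. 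The paper instead asserts $2|\mP|=|S|$, but literally one only has $2|\mP|\ge |S|$ (a pair with \emph{strictly} nested supports contributes a single orientation), which points the wrong way; your plane-based count is exactly what justifies the factor~$\tfrac12$ in the statement.
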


\begin{proof} 
We use an argument inspired by the methods of~\cite{gruica2020common} but which is simpler and avoids the graph theory language.
Form a set of representatives for the equivalence classes of nonzero vectors in $\F_{q^m}^n$. Call this set $\mQ$ and let 
$$\mP=\{P=\{x,y\} \subseteq \mQ \st x \neq y, \, \sigma^\rk(x) \subseteq \sigma^\rk(y) \mbox{ or } \sigma^\rk(y) \subseteq \sigma^\rk(x)\}.$$
The $[n,k]_{q^m/q}$ non-minimal codes are the $k$-dimensional subspaces $\mC \subseteq \F_{q^m}^n$ such that $P \subseteq \mC$ for some $P \in \mP$. Their number is at most
$$\sum_{P \in \mP} |\{\mC \subseteq \F_{q^m}^n \st \mC \supseteq P\}| = |\mP| \qbin{n-2}{k-2}{q}.$$
Therefore, the minimal $[n,k]_{q^m/q}$ codes are at least
$$\qbin{n}{k}{q^m} - |\mP| \qbin{n-2}{k-2}{q^m} = 
\qbin{n-2}{k-2}{q^m} \left( \frac{(q^{mn}-1)(q^{m(n-1)}-1)}{(q^{mk}-1)(q^{m(k-1)}-1)} - |\mP| \right).$$
In particular, a minimal $[n,k]_{q^m/q}$ code
exists if
\begin{equation} \label{kk}
    \frac{(q^{mn}-1)(q^{m(n-1)}-1)}{(q^{mk}-1)(q^{m(k-1)}-1)} - |\mP|>0.
\end{equation}
Finally, we count the elements of $\mP$ as
\begin{align*}
    2|\mP| &= \sum_{i=1}^m |\{(x,y) \in \mQ^2 \st x \neq y, \, \rk(y) =i, \, \sigma^\rk(x) \subseteq \sigma^\rk(y)\}| \\
    &= \sum_{i=1}^m \sum_{\substack{y \in \mQ \\ \rk(y)=i}} |\{x \in \mQ \st x \neq y, \,  \sigma^\rk(x) \subseteq \sigma^\rk(y)\}| \\
    &=\sum_{i=1}^m \frac{1}{q^m-1} \qbin{m}{i}{q} \, \prod_{j=0}^{i-1} (q^n-q^j) \left( \frac{q^{mi}-1}{q^m-1}-1  \right) \\
    &=\sum_{i=2}^m \frac{1}{q^m-1} \qbin{m}{i}{q} \, \prod_{j=0}^{i-1} (q^n-q^j) \left( \frac{q^{mi}-1}{q^m-1}-1  \right).
\end{align*}
Combining this with~\eqref{kk} concludes the proof.
\end{proof}

We now give a sufficient condition under which the assumption in Lemma~\ref{lem:exist} is satisfied. This gives us parameter ranges for which minimal codes exist. Note that, in line with what we observed in the Introduction of this paper, the next result does not depend on the field size~$q$. This behaviour of minimal rank-metric codes is in sharp contrast with analogous results for minimal codes in the Hamming metric; see e.g.~\cite[Theorem 2.14]{alfarano2020three}.

\begin{corollary}\label{cor:existenceres}
For every $m, k \ge 2$, there exists a minimal $[2k+m-2,k]_{q^m/q}$ code.
\end{corollary}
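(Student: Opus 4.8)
The plan is to apply Lemma~\ref{lem:exist} with $n = 2k+m-2$ and to verify that the quantity in~\eqref{eq:existence} is strictly positive for \emph{every} prime power $q$. Write $A$ for the first term $\frac{(q^{mn}-1)(q^{m(n-1)}-1)}{(q^{mk}-1)(q^{m(k-1)}-1)}$ and $S$ for the subtracted (halved) sum, so that the goal is exactly $A > S$. The reason the result is delicate is that, for this choice of $n$, the quantity $A$ and the \emph{largest} summand of $S$ (the term $i=m$) have the \emph{same} order of magnitude $q^{2m(k+m-2)}$. Indeed, the exponent governing the $i$-th summand is $f(i):=ni+2mi-i^2-2m$, whose discrete derivative $f(i)-f(i-1)=n+2m-2i+1$ is $\ge n+1\ge 5>0$ on $\{2,\dots,m\}$; hence $f$ is increasing and peaks at $i=m$, where $f(m)=m(n+m-2)=2m(k+m-2)$ after substituting $n=2k+m-2$. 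Thus the comparison cannot be decided by a crude order argument, and the leading constants must be tracked carefully.

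First I would bound $A$ from below. Using $\frac{q^{ma}-1}{q^{mb}-1}>q^{m(a-b)}$ whenever $a>b$ (because $1-q^{-ma}>1-q^{-mb}>0$), applied to each of the two factors of $A$, one obtains the clean estimate
\[
A \;>\; q^{2m(n-k)} \;=\; q^{2m(k+m-2)}.
\]
Next I would bound $S$ from above by isolating the dominant term. Denoting the $i$-th summand of $\tfrac12\sum$ doubled by $T_i$ and using $\prod_{j=0}^{i-1}(q^n-q^j)<q^{ni}$, the inequality $\frac{q^{mi}-1}{q^m-1}-1<\frac{q^{mi}}{q^m-1}$, and $\qbin{m}{i}{q}\le c_q\,q^{i(m-i)}$ with $c_q:=\prod_{\ell\ge1}(1-q^{-\ell})^{-1}\le c_2<3.463$, one gets $T_i<c_q\big(\tfrac{q^m}{q^m-1}\big)^2 q^{f(i)}$; for the top term $\qbin{m}{m}{q}=1$, so $T_m<\big(\tfrac{q^m}{q^m-1}\big)^2 q^{f(m)}$. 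Since $m\ge2$ forces $q^m\ge4$ and hence $\tfrac{q^m}{q^m-1}\le\tfrac43$, this gives $T_m<\tfrac{16}{9}\,q^{2m(k+m-2)}$. For the remaining terms one uses $f(m)-f(i)\ge(m-i)(n+1)$ together with $n+1\ge5$ to sum the resulting geometric series, obtaining
\[
\sum_{i=2}^{m-1}T_i \;<\; c_2\cdot\tfrac{16}{9}\cdot\frac{q^{-(n+1)}}{1-q^{-(n+1)}}\,q^{2m(k+m-2)} \;<\; \tfrac{2}{9}\,q^{2m(k+m-2)}.
\]
Adding the two contributions yields $\sum_{i=2}^m T_i<(\tfrac{16}{9}+\tfrac29)\,q^{2m(k+m-2)}=2\,q^{2m(k+m-2)}$, so $S=\tfrac12\sum_i T_i<q^{2m(k+m-2)}<A$, as required.

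The main obstacle is precisely this bookkeeping. Because $A$ and the top term $T_m$ share the leading order $q^{2m(k+m-2)}$ with constants $1$ and (after halving) $\tfrac12$, the available margin is essentially a single factor of $2$, so the argument is valid only if the accumulated constants $\tfrac{16}{9}$, $c_2$, and $\tfrac{q^{-(n+1)}}{1-q^{-(n+1)}}$ all fit under this budget. The tightest situation is $q=2$, where the Gaussian-binomial correction $c_q$ is largest; the estimates above are arranged to survive exactly this case. As a sanity check I would verify the degenerate case $m=2$ (no subdominant terms, so $S=\tfrac12 T_2<\tfrac89\,q^{2m(k+m-2)}<A$) and the extremal instance $q=m=k=2$, $n=4$, where a direct computation gives $A=357>|\mathcal{P}|=140$, confirming that the bounds are not vacuous.
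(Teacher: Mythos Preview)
Your proof is correct and follows essentially the same route as the paper's: both apply Lemma~\ref{lem:exist} with $n=2k+m-2$, lower bound the first term by $q^{2m(n-k)}$, isolate the dominant summand $i=m$ (using $\qbin{m}{m}{q}=1$), and control the remaining terms via the bound $\qbin{m}{i}{q}\le c_q\,q^{i(m-i)}$ together with a geometric-series estimate. The only difference is in bookkeeping: you track the constant $(q^m/(q^m-1))^2\le 16/9$ and sum the tail as a geometric series in $q^{-(n+1)}$, whereas the paper clears denominators by multiplying through by $2(q^m-1)^2$ and reduces the final comparison to the explicit polynomial inequality $q^{2m+4}-4q^{m+4}-q^{2m}+2q^4\ge 0$; the two verifications are equivalent in spirit.
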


\begin{proof}
Fix an integer $n \ge k$ and observe that 
$$\frac{(q^{mn}-1)(q^{m(n-1)}-1)}{(q^{mk}-1)(q^{m(k-1)}-1)} \ge q^{mn+m(n-1)-mk-m(k-1)}=q^{2m(n-k)}.$$
Therefore the quantity in \eqref{eq:existence} can be bounded from below as follows:
\begin{align*}
   \eqref{eq:existence} & \geq   
    q^{2m(n-k)}- \frac{1}{2(q^m-1)^2} \, \sum_{i=2}^m  \qbin{m}{i}{q}\cdot q^{\binom{i}{2}} \cdot (q^{mi}-q^m)\, \prod_{j=0}^{i-1} (q^{n-j}-1)\\
    & >  q^{2m(n-k)}- \frac{1}{2(q^m-1)^2} \, \sum_{i=2}^m  \qbin{m}{i}{q}\cdot q^{\binom{i}{2}} \cdot q^{mi}\, \prod_{j=0}^{i-1} q^{n-j}\\
    & =  q^{2m(n-k)}- \frac{1}{2(q^m-1)^2} \, \sum_{i=2}^m  \qbin{m}{i}{q}\cdot q^{i(m+n)}=:t_q(m,n,k).
\end{align*}
Define the function
 $$ f(q):=\prod_{i=1}^{\infty}\frac{q^i}{q^i-1}.$$  In the sequel, we will use the following estimates:
\begin{align}
    \qbin{a}{b}{q}&<f(q)\, q^{b(a-b)}, && \hspace{-3em} \mbox{for } a,b \in \mathbb N, \label{est1}\\
%    \frac{q^a-1}{q^b-1}&\leq \frac{q}{q-1}q^{a-b}, && \hspace{-3em} \mbox{for } a,b \in \mathbb N, \ b < a, \label{est2} 
%    \\
   q^{e_1}+\ldots+q^{e_r}&\leq \frac{q}{q-1}q^{e_r}, &&\hspace{-3em} \mbox{for } e_i \in \mathbb Z, \ 0\leq e_1<\ldots<e_r. \label{est3}
\end{align}
We have
\begin{align}\label{eq:aux_ineq_tq}
    2(q^m-1)^2t_q(m,n,k)&\stackrel{\phantom{(6.6)}}{=}2(q^m-1)^2q^{2m(n-k)}-q^{m(m+n)}-\sum_{i=2}^{m-1}\qbin{m}{i}{q}q^{i(m+n)}  \nonumber \\
    &\stackrel{\eqref{est1}}{>}2(q^m-1)^2q^{2m(n-k)}-q^{m(m+n)}-f(q)\sum_{i=2}^{m-1}q^{i(2m+n-i)}  \nonumber \\
    &\stackrel{\eqref{est3}}{>}2(q^m-1)^2q^{2m(n-k)}-q^{m(m+n)}-\frac{qf(q)}{q-1}q^{(m-1)(m+n-1)}.  \nonumber \\
     &\stackrel{\phantom{(6.6)}}{>}2(q^m-1)^2q^{2m(n-k)}-q^{m(m+n)}-q^{(m-1)(m+n-1)+3},
\end{align}
where the last inequality follows from the fact that $f(q)<4$ for every prime power $q$.

We now specialize the argument to $n=2k+m-2$, proving that $t_q(m,2k+m-2,k)>0$ for every $m,k\geq 2$ and prime power $q$. 
% In the following chain of inequality, the one marked with~$*$ follows from \eqref{est1} and the one marked with~$**$ follows from \eqref{est3}. 
Using \eqref{eq:aux_ineq_tq} we find
\begin{align*}
    2(q^m-1)^2t_q(m,2k+m-2,k)& > 2(q^m-1)^2q^{2m(m+k-2)}-q^{2m(m+k-1)}-q^{(m-1)(2m+2k-3)+3} \\
    &= 2(q^m-1)^2q^{2m(m+k-2)}-(1+q^{-3m-2k+6})q^{2m(m+k-1)} \\ 
    & \geq 2(q^m-1)^2q^{2m(m+k-2)}-(1+q^{-4})q^{2m(m+k-1)} \\
    &= q^{2m(m+k-2)-4}\left(2(q^m-1)^2q^4-(q^4+1)q^{2m}\right)
\end{align*}
Hence  $t_q(m,2k+m-2,k)>0$  whenever 
$q^{2m+4}-4q^{m+4}-q^{2m}+2q^{4}\geq 0,$
which holds for every $m \geq 2$ and every prime power $q$.
Therefore there exists a minimal $[2k+m-2,k]_{q^m/q}$ code by Lemma~\ref{lem:exist}.
\end{proof}

\begin{remark}
Fix integers $k,m\geq 2$.
Then  Corollary \ref{coro:lowerboundlength} 
tells us that
for any length value $n<k+m-1$ an $\Fmk$ minimal code cannot exist, for any field size $q$. On the other hand, by Corollary \ref{cor:existenceres} for $n\geq 2k+m-2$ there exist $\Fmk$ minimal codes for every field size $q$. Therefore the existence of $\Fmk$  minimal codes remains in general an open question only for 
$k+m-1 \le n \le 2k+m-3$. 
%The only exceptions we are aware of are the $[m+2,3]_{q^m/q}$ codes constructed in Section \ref{sec:three_dim_minimal}. 
\end{remark}

\subsection{The Linearity Index of a $q$-System}

% \red{Da qui in poi ho generalizzato il lower bound tirando in mezzo i generalized rank weight. Cosi' possiamo motivare il fatto che li studiamo! }

Given an $\Fmk$ system $\mU$, one could be interested in understanding how $\mU$ is related to $\Fm$-subspaces of $\Fm^k$ and not only to $\Fm$-hyperplanes. This indeed could reveal some additional information on its parameters and whether it can be a  linear cutting blocking set or not. In this subsection we define and analyze a new parameter of projective system and with its aid we generalize the lower bound in Corollary~\ref{coro:lowerboundlength} for the length of minimal codes. 

\bigskip

Let $\mU$ be a $\Fmk$ system. We introduce a measure for the ``linearity'' of $\mU$ over $\F_{q^m}$. More precisely, we define the \textbf{linearity index} of $\mU$ as 
$$\ell(\mU)=\max\{ \dim_{\Fm}(H) \st H\subseteq \Fm^k \mbox{ is an $\F_{q^m}$-subspace}, \, H\subseteq \mU \}.$$

Observe that the value of $\ell(\mU)$  is invariant under equivalence of $\Fmk$ systems. In particular, it is a well-defined structural parameter of the corresponding equivalence class $[\mU]$. The following result relates $\ell(\mU)$ to the generalized rank weight of a code that gives rise to the $q$-system $\mU$.

\begin{lemma}\label{lem:pippo1-1}
Let $\mC$ be a nondegenerate $\Fmk$ code, and let $\mU$ be any corresponding $\Fmk$ system. Then 
$$ \ell(\mU)=k-\min\{r \st \drk_{r}(\mC)=n-(k-r)m\}.$$
\end{lemma}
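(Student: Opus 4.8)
The plan is to prove the identity by translating both sides into the same statement about $\Fm$-subspaces contained in $\mU$. The bridge is the following equivalence, which I would establish first: for each $r \in \{0,1,\dots,k\}$,
$$\drk_r(\mC) = n-(k-r)m \quad\Longleftrightarrow\quad \text{there is an } \Fm\text{-subspace } H\subseteq \mU \text{ with } \dim_{\Fm}(H)=k-r.$$
Here I read $\drk_0 := 0$, which is consistent with the formula since $\drk_0 = n-km$ can only hold when $n=km$, i.e.\ for the simplex system $\mU = \Fm^k$.

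To obtain this equivalence I would invoke Theorem~\ref{thm:genrankweight}, which expresses $\drk_r(\mC) = n - \max\{\dim_{\Fq}(\mU\cap H)\}$, the maximum being taken over $\Fm$-subspaces $H$ of $\Fm^k$ of codimension $r$. For any such $H$ one has $\dim_{\Fq}(H)=(k-r)m$, so $\dim_{\Fq}(\mU\cap H)\le (k-r)m$ always, with equality precisely when $\mU\cap H = H$, that is, when $H\subseteq\mU$. Hence the maximum attains the value $(k-r)m$ — equivalently $\drk_r(\mC)=n-(k-r)m$ — exactly when some codimension-$r$ (i.e.\ $\Fm$-dimension $k-r$) subspace is contained in $\mU$. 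This is the claimed equivalence.

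The second step is to rewrite $\ell(\mU)$ in terms of the same data. By definition $\ell(\mU)=\max\{\dim_{\Fm}(H) : H\subseteq\mU \text{ is an } \Fm\text{-subspace}\}$; writing $\dim_{\Fm}(H)=k-r$, a subspace of $\Fm$-dimension $k-r$ has codimension $r$, so $\ell(\mU) = k - \min\{r : \text{there is an } \Fm\text{-subspace } H\subseteq\mU \text{ of codimension } r\}$. Combining this with the equivalence above, the set of codimensions realized by $\Fm$-subspaces of $\mU$ coincides with $\{r : \drk_r(\mC)=n-(k-r)m\}$, and therefore their minima agree. Substituting yields $\ell(\mU)=k-\min\{r:\drk_r(\mC)=n-(k-r)m\}$, as desired.

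I expect no serious obstacle here; the argument is essentially a dictionary between ``$\mU$ contains an $\Fm$-subspace of a prescribed dimension'' and ``the generalized rank weight hits its trivial upper value $n-(k-r)m$''. The one point requiring care is the range of $r$ and the well-definedness of the minimum: the trivial subspace $\{0\}\subseteq\mU$ always realizes codimension $k$ (equivalently $\drk_k(\mC)=n$), so the set is nonempty, while $r=0$ enters only in the simplex case $\mU=\Fm^k$, where $\ell(\mU)=k$ and $n=km$. I would state the convention $\drk_0:=0$ explicitly so that this boundary case is covered cleanly by the same formula.
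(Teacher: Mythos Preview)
Your proof is correct and follows essentially the same route as the paper: both arguments invoke Theorem~\ref{thm:genrankweight} to rewrite $\drk_r(\mC)=n-(k-r)m$ as the existence of an $\Fm$-subspace of codimension~$r$ inside~$\mU$, then read off the identity from the definition of~$\ell(\mU)$. Your treatment is in fact slightly more careful than the paper's: you explicitly handle the boundary case $r=0$ (equivalently $\ell(\mU)=k$, the simplex system $\mU=\Fm^k$), which the paper's proof leaves implicit since the generalized rank weights are only defined for $r\ge 1$ in Definition~\ref{def:rkw}.
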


\begin{proof}
 First of all, note that the set $\{r \st \drk_{r}(\mC)=n-(k-r)m\}$ is nonempty, since $\drk_k(\mC)=n$. By Theorem \ref{thm:genrankweight}, $$\drk_r(\mC)  =n- \max \left\{\dim_{\Fq}(\mU\cap H) \st H \mbox{ is an } \Fm\mbox{-subspace of codimension } r  \mbox{ in } \Fm^k \right\},$$
 which is equal to $n-(k-r)m$ if and only if there exists $H\subseteq \Fm^k$ of codimension $r$ contained in $\mU$.
\end{proof}

Lemma \ref{lem:pippo1-1} shows that the parameter $\ell$ is well-defined in the correspondence of Theorem~\ref{thm:1-1}. Hence, it is also a well-defined parameter of a nondegenerate  code $\mC$. Therefore, we will also refer to the \textbf{linearity index of a code $\mC$}, and denote it by $\ell(\mC)$.

\begin{lemma}\label{lem:pippo-genweight}
Let $\mC$ be a nondegenerate $\Fmk$ code with linearity index $\ell$.
 Then,  $\drk_{i+1}(\mC)-\drk_i(\mC)=m$ if and only if $i \geq k-\ell(\mC)$.
\end{lemma}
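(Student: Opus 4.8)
The plan is to translate everything into the maximal intersection dimensions of Theorem~\ref{thm:genrankweight}. Writing $M_r := \max\{\dim_{\Fq}(\mU\cap H) \st H \text{ is an } \Fm\text{-subspace of codimension } r \text{ in } \Fm^k\}$, that theorem gives $\drk_r(\mC) = n - M_r$, so the gap rewrites as
\[
\drk_{i+1}(\mC) - \drk_i(\mC) = M_i - M_{i+1}.
\]
A preliminary observation is that $M_i - M_{i+1} \le m$: taking a maximizer $H$ for $M_i$ and an $\Fm$-hyperplane $H''$ of $H$, passing from $H$ to $H''$ drops the $\Fq$-dimension of $\mU\cap H$ by at most $m$, so $M_{i+1}\ge M_i-m$. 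Thus the gap equals $m$ precisely when this drop is maximal for the best choice of $H$.

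For the forward direction, suppose $i\ge k-\ell$. Then $i$ and $i+1$ are both $\ge k-\ell$, and the characterization established in the proof of Lemma~\ref{lem:pippo1-1} — namely that $\drk_r(\mC)=n-(k-r)m$ if and only if $\mU$ contains an $\Fm$-subspace of codimension $r$, which happens exactly when $r\ge k-\ell$ — shows $\drk_i(\mC)=n-(k-i)m$ and $\drk_{i+1}(\mC)=n-(k-i-1)m$. Subtracting gives gap $m$ at once.

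The converse is the heart of the argument. Assume the gap at $i$ equals $m$; I want to produce an $\Fm$-subspace of dimension $k-i$ inside $\mU$, which forces $\ell\ge k-i$, i.e. $i\ge k-\ell$. Pick a codimension-$i$ subspace $H$ realizing $M_i$ and set $V:=\mU\cap H$, $s:=k-i=\dim_{\Fm}H$, and $t:=M_i=\dim_{\Fq}V$. For any $\Fm$-hyperplane $H''$ of $H$ we have $\mU\cap H''=V\cap H''$, and since $H''$ has codimension $i+1$ its intersection satisfies $\dim_{\Fq}(V\cap H'')\le M_{i+1}=t-m$; combined with the automatic lower bound $\dim_{\Fq}(V\cap H'')\ge t-m$, every such $H''$ meets $V$ in dimension exactly $t-m$. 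In particular $V$ lies in no $\Fm$-hyperplane of $H$, so $\langle V\rangle_{\Fm}=H$ and $V$ is a $[t,s]_{q^m/q}$ system inside $H\cong\Fm^s$. I would then apply the Standard Equations (Lemma~\ref{lem:st-eq}) to $V$, summing $|H''\cap(V\setminus\{0\})|=q^{t-m}-1$ over all $\Fm$-hyperplanes of $H$ and comparing with the value the lemma prescribes, to obtain
\[
(q^t-1)(q^{m(s-1)}-1)=(q^{t-m}-1)(q^{ms}-1).
\]
The punchline is Lemma~\ref{lem:tpowers}, which forces $\{t,m(s-1)\}=\{t-m,ms\}$, whence $t=ms$; that is, $V=H$ and $H\subseteq\mU$, as required.

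I expect the main obstacle to be exactly this converse, and more precisely the step of recognizing that ``gap $=m$'' funnels the data into the rigidity equation to which the uniqueness statement of Lemma~\ref{lem:tpowers} applies. The degenerate cases ($s=1$, or $t\le m$) fall outside the clean hypotheses of that lemma and should be checked by hand: each either collapses directly to $t=ms$ or yields a contradiction, so none of them produces a genuine gap of size $m$ with $V\subsetneq H$, and the conclusion stands.
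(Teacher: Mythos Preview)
Your argument is correct and follows essentially the same route as the paper: pick a codimension-$i$ maximizer $H$, set $V=\mU\cap H$, observe that every $\Fm$-hyperplane of $H$ meets $V$ in $\Fq$-dimension exactly $t-m$, feed this into the Standard Equations to get the product identity, and finish with Lemma~\ref{lem:tpowers}. Your write-up is in fact slightly more careful than the paper's own proof: you note that $\langle V\rangle_{\Fm}=H$ (so the Standard Equations apply to $V$ as a $[t,s]_{q^m/q}$ system in $H$) and you flag the degenerate cases $s=1$ or $t=m$ that fall outside the positivity hypotheses of Lemma~\ref{lem:tpowers}, whereas the paper leaves these implicit.
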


\begin{proof}
  \underline{$(\Leftarrow)$} This implication follows from the definition of $\ell(\mC)$.
  
  \underline{$(\Rightarrow)$} Let $\mU$ be any $\Fmk$ system associated to $\mC$. Let $H\subseteq \Fm^k$ be the space of codimension $i$ such that $\drk_i(\mC)=n-\dim_{\Fq}(H\cap \mU)$ and let $t:=\dim_{\Fq}(H\cap \mU)$.  Let $\mV:=H\cap \mU$ and observe that $|H^\prime \cap (\mV\setminus \{0\})| = (q^{t-m}-1)$ for any hyperplane $H^\prime$ in $H$. Let $\Lambda$ be the set of all hyperplanes in $H$, then by Lemma \ref{lem:st-eq}, we have
  
  $$\sum_{H^\prime\in\Lambda} |H^\prime \cap (\mV\setminus \{0\})| = \binom{k-i}{1}_{q^m}(q^{t-m}-1).$$
  
  Moreover, observe that every nonzero element of $\mV$ belongs to exactly $\binom{k-i-1}{1}_{q^m}$ hyperplanes in $H$. Hence,
  
  $$\sum_{H^\prime\in\Lambda} |H^\prime \cap (\mV\setminus \{0\})| = \sum_{v\in\mV\setminus\{0\}} |\{H^\prime \st v\in H^\prime\}| = \binom{k-i-1}{1}_{q^m}(q^t-1).$$
  
  By a double counting argument, we then have that 
  
  $$(q^{(k-i)m}-1)(q^{t-m}-1)=(q^t-1)(q^{(k-i-1)m}-1).$$
  By Lemma \ref{lem:tpowers}, it follows that $t=(k-i)m$. In particular, $\mV$ is an $[km,k]_{q^m/q}$ system associated to a simplex code. We conclude then that $H\subseteq \mV$ and then $H\subseteq \mU$, which implies that $\ell(\mC) \geq k-i$.
  
\end{proof}

\begin{proposition}\label{prop:pippo}
  Let $\mC$ be a nondegenerate $\Fmk$ code. Then 
  $$ \ell(\mC)\geq n-k(m-1).$$
\end{proposition}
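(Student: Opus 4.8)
The plan is to read the bound off the chain of generalized rank weights $\drk_1(\mC) < \drk_2(\mC) < \cdots < \drk_k(\mC) = n$, using the dictionary between $\ell(\mC)$ and these weights supplied by Lemmas~\ref{lem:pippo1-1} and~\ref{lem:pippo-genweight}. Adopting the convention $\drk_0(\mC) := 0$, I would telescope
$$n = \drk_k(\mC) = \sum_{i=0}^{k-1}\bigl(\drk_{i+1}(\mC) - \drk_i(\mC)\bigr)$$
and estimate each of the $k$ consecutive differences (``gaps'') separately. The whole proof then reduces to controlling these gaps.

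Two facts drive the estimate. First, \emph{every gap is at most $m$}: using the formula $\drk_r(\mC) = n - \max\{\dim_{\Fq}(\mU\cap H) \st H \text{ an } \Fm\text{-subspace of codimension } r\}$ from Theorem~\ref{thm:genrankweight}, I would take a codimension-$r$ subspace $H$ realizing the maximum and pass to an $\Fm$-hyperplane $H' \subset H$ (of codimension $r+1$ in $\Fm^k$). Since $H'$ has $\Fq$-codimension $m$ inside $H$, one gets $\dim_{\Fq}(\mU\cap H') \geq \dim_{\Fq}(\mU\cap H) - m$, whence $\drk_{r+1}(\mC) \leq \drk_r(\mC) + m$ for all $0 \leq r \leq k-1$. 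Second, Lemma~\ref{lem:pippo-genweight} tells me \emph{exactly which} gaps attain this maximal value: $\drk_{i+1}(\mC) - \drk_i(\mC) = m$ if and only if $i \geq k - \ell(\mC)$.

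Combining the two, among the $k$ gaps indexed by $i \in \{0,1,\ldots,k-1\}$ precisely the $\ell := \ell(\mC)$ top ones (those with $i \geq k-\ell$) equal $m$, while each of the remaining $k-\ell$ gaps is at most $m-1$. Summing,
$$n = \sum_{i=0}^{k-1}\bigl(\drk_{i+1}(\mC) - \drk_i(\mC)\bigr) \leq \ell m + (k-\ell)(m-1) = k(m-1) + \ell,$$
which rearranges to the claimed $\ell(\mC) \geq n - k(m-1)$.

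The step I expect to be the main obstacle is the correct treatment of the \emph{bottom} gap $\drk_1(\mC) - \drk_0(\mC) = \drk_1(\mC) = d$, i.e.\ the case $i = 0$ above. I must either verify that Lemma~\ref{lem:pippo-genweight} stays valid at $i=0$ under the convention $\drk_0 = 0$ (its proof, resting on the Standard Equations of Lemma~\ref{lem:st-eq} and on Lemma~\ref{lem:tpowers}, goes through verbatim and in fact shows that $d = m$ forces $\mU = \Fm^k$, so $\ell = k$), or else split off the case $\ell = k$ --- where $\mU = \Fm^k$, hence $n = km$ and the inequality is immediate from $n \leq km$ (Proposition~\ref{prop:nleqkm}) --- and separately argue that $d \leq m-1$ whenever $\ell < k$. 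Once this boundary bookkeeping and the gap bound $\leq m$ are secured, the conclusion is the one-line summation displayed above.
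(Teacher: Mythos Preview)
Your proposal is correct and follows essentially the same route as the paper: telescope $n=\sum_{i=0}^{k-1}(\drk_{i+1}(\mC)-\drk_i(\mC))$, use Lemma~\ref{lem:pippo-genweight} to identify the $\ell(\mC)$ top gaps as exactly $m$ and bound the remaining $k-\ell(\mC)$ gaps by $m-1$, then sum. Your treatment is in fact more careful than the paper's---you explicitly justify the bound $\drk_{r+1}(\mC)-\drk_r(\mC)\le m$ and address the boundary case $i=0$, both of which the paper leaves implicit (and the paper's displayed inequality contains a typographical slip, though the intended computation is exactly yours).
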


\begin{proof}
Observe that $\sum_{i=0}^{k-1} \drk_{i+1}(\mC)-\drk_i(\mC) = \drk_k(\mC) - \drk_0(\mC) = n$. Moreover, by applying Lemma \ref{lem:pippo-genweight}, we have that
\begin{align*}
    \sum_{i=0}^{k-1} \drk_{i+1}(\mC)-\drk_i(\mC) &= \sum_{i=0}^{k-\ell(\mC)-1}\drk_{i+1}(\mC)-\drk_i(\mC) +  \sum_{i=k-\ell(\mC)}^{k-1}\drk_{i+1}(\mC)-\drk_i(\mC)\\ 
    &\leq (m-1)(\ell(\mC)-1) + m\ell(\mC)=m(k-1) + \ell(\mC)-m. \qedhere
\end{align*}
\end{proof}

The linearity index of a code can help in characterizing and finding improved bounds on the other parameters of a minimal $\Fmk$ code.

\begin{lemma}\label{lem:quotientcutting}
 Let $\mU$ be a  linear cutting $[n,k]_{q^m/q}$ blocking set. Suppose that there exists an $\ell$-dimensional $\Fm$-subspace $T$ of $\Fm^k$ such that $T\subseteq\mU$. Then $\mU/T$ is isomorphic to a linear cutting $[n-\ell m,k-\ell]_{q^m/q}$ blocking set. 
\end{lemma}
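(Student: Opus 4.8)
The plan is to realize $\mU/T$ as the image of $\mU$ under the canonical $\Fm$-linear quotient map $\pi\colon \Fm^k \to \Fm^k/T$ and to verify that it inherits the cutting property from $\mU$ through $\pi$. First I would check that $\mU/T$ is a genuine $[n-\ell m,\,k-\ell]_{q^m/q}$ system. Since $T$ is an $\Fm$-subspace of $\Fm$-dimension $\ell$, its $\Fq$-dimension is $\ell m$, and because $T\subseteq\mU$ the $\Fq$-space $\mU/T=\pi(\mU)$ has $\Fq$-dimension $n-\ell m$. Moreover $\pi$ is surjective and $\Fm$-linear, so $\langle \mU/T\rangle_{\Fm}=\pi(\langle \mU\rangle_{\Fm})=\pi(\Fm^k)=\Fm^k/T$, which is $(k-\ell)$-dimensional over $\Fm$; hence $\mU/T$ spans and has the claimed parameters, up to the $\Fm$-isomorphism $\Fm^k/T\cong\Fm^{k-\ell}$ (which preserves both equivalence of $q$-systems and the cutting property).

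The core of the argument is to apply the characterization of Proposition~\ref{prop:cutting} in the quotient: I would show that $\langle \bar H\cap(\mU/T)\rangle_{\Fm}=\bar H$ for every $\Fm$-hyperplane $\bar H$ of $\Fm^k/T$. Every such $\bar H$ is of the form $H/T$, where $H=\pi^{-1}(\bar H)$ is an $\Fm$-hyperplane of $\Fm^k$ containing $T$ (its preimage contains $\ker\pi=T$ and has codimension one). The key identity is
$$ (H/T)\cap(\mU/T)=(H\cap\mU)/T, $$
which holds precisely because $T$ is contained in both $H$ and $\mU$: a coset $x+T$ lies in both $H/T$ and $\mU/T$ if and only if $x$ can be adjusted by an element of $T$ to land in $H$ and in $\mU$ respectively, and since $T\subseteq H\cap\mU$ this forces $x\in H\cap\mU$.

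With this identity in hand, I would use that $\mU$ is a linear cutting blocking set, so $\langle H\cap\mU\rangle_{\Fm}=H$ by Proposition~\ref{prop:cutting}, together with the fact that the $\Fm$-linear map $\pi$ commutes with taking $\Fm$-span, obtaining
$$ \langle \bar H\cap(\mU/T)\rangle_{\Fm}=\langle \pi(H\cap\mU)\rangle_{\Fm}=\pi(\langle H\cap\mU\rangle_{\Fm})=\pi(H)=H/T=\bar H. $$
By Proposition~\ref{prop:cutting} this shows $\mU/T$ is a linear cutting blocking set, completing the proof. I expect the only delicate point to be the bookkeeping around the quotient: verifying that $\Fm$-hyperplanes of $\Fm^k/T$ correspond exactly to $\Fm$-hyperplanes of $\Fm^k$ through $T$, and establishing the intersection identity $(H/T)\cap(\mU/T)=(H\cap\mU)/T$ cleanly. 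Everything else is a formal transport of the cutting condition through the $\Fm$-linear surjection $\pi$.
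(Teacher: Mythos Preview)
Your proof is correct and follows essentially the same approach as the paper: both reduce to Proposition~\ref{prop:cutting}, identify hyperplanes of $\Fm^k/T$ with hyperplanes of $\Fm^k$ containing $T$, use the identity $(H/T)\cap(\mU/T)=(H\cap\mU)/T$, and then push the cutting condition on $\mU$ through the quotient map. Your write-up is in fact more careful than the paper's, as you explicitly verify the parameters of $\mU/T$ and justify the intersection identity.
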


\begin{proof}
 By Proposition \ref{prop:cutting}, we need to show that for every $\Fm$-hyperplane $\bar{H}$ of $\Fm^k/T$ we have $\langle \bar{H}\cap\mU/T\rangle_{\Fm}=\bar{H}$. The $\Fm$-hyperplanes of $\Fm^k/T$ correspond to the $\Fm$-hyperplanes of $\Fm^k$ that contain $T$. Let $\bar{H}$ be an $\Fm$-hyperplane of $\Fm^k/T$. Then there exists an $\Fm$-hyperplane~$H$ of~$\Fm^k$ such that $\bar{H}=H/T$. Hence, 
 $$ \langle \bar{H}\cap\mU/T\rangle_{\Fm}=\langle (H\cap\mU)/T\rangle_{\Fm}=\langle H\cap\mU\rangle_{\Fm}/T=H/T=\bar{H},$$
 where the second last equality follows from the fact that $\mU$ is a linear cutting $[n,k]_{q^m/q}$ blocking set and by Proposition \ref{prop:cutting}.
\end{proof}

The following result is a generalization of 
Corollary~\ref{coro:lowerboundlength}.

\begin{proposition}\label{prop:genlowerbound}
 Let $\mU$ be a linear cutting $[n,k]_{q^m/q}$ blocking set and let $\ell$ be its linearity index. If $k-\ell\geq 2$, then
 $$ n-k\geq (\ell+1)(m-1).$$
 In particular, for every $1\leq r \leq k-\lfloor\frac{n-k+1}{m-1}\rfloor-1$, we have $\drk_{r}(\mC)>n-rm$, where $\mC$ is the nondegnerate $\Fmk$ code associated to $\mU$.
\end{proposition}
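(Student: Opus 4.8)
The plan is to prove the displayed inequality $n-k\geq(\ell+1)(m-1)$ first, and then to use it as the engine driving the subsequent generalized-weight statement.

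For the inequality I would exploit the quotient construction of Lemma~\ref{lem:quotientcutting}. Since $\ell=\ell(\mU)$ is the linearity index, there is an $\ell$-dimensional $\F_{q^m}$-subspace $T\subseteq\mU$, and by Lemma~\ref{lem:quotientcutting} the quotient $\mU/T$ is (isomorphic to) a linear cutting $[n-\ell m,\,k-\ell]_{q^m/q}$ blocking set. The hypothesis $k-\ell\geq 2$ guarantees that this quotient system has dimension at least $2$, so by Corollary~\ref{coro:1-1cutting} it corresponds to a nondegenerate minimal code of dimension $k-\ell\geq 2$ and length $n-\ell m$. Applying the length bound of Corollary~\ref{coro:lowerboundlength} to this quotient gives $n-\ell m\geq(k-\ell)+m-1$, which rearranges exactly to $n-k\geq(\ell+1)(m-1)$. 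I expect no real difficulty here; note in particular that the maximality of $\ell$ is not needed for this direction, only the existence of one $\ell$-dimensional $\F_{q^m}$-subspace inside $\mU$.

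For the ``in particular'' clause I would first convert the inequality into an upper bound on the linearity index. Writing $L:=\lfloor(n-k+1)/(m-1)\rfloor$, the inequality $n-k\geq(\ell+1)(m-1)$ gives $\ell+1\leq(n-k)/(m-1)\leq(n-k+1)/(m-1)$, and since $\ell+1$ is an integer this forces $\ell+1\leq L$, i.e. $\ell\leq L-1$. I would also verify that whenever the index range $1\leq r\leq k-L-1$ is nonempty (equivalently $L\leq k-2$) the hypothesis $k-\ell\geq 2$ is automatic: the cases $\ell=k$ (so $\mU=\F_{q^m}^k$ and $n=km$) and $\ell=k-1$ (applying Lemma~\ref{lem:quotientcutting} with $T$ a hyperplane) each force $L\geq k-1$, contradicting $L\leq k-2$. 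Hence the bound $\ell\leq L-1$ is legitimately available, and for every $r$ in the stated range we obtain $r\leq k-L-1\leq k-\ell-2<k-\ell$.

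The last step is to translate $r<k-\ell$ into a statement about $\drk_r(\mC)$ via Lemma~\ref{lem:pippo1-1}, which identifies $k-\ell$ as the smallest index $s$ with $\drk_s(\mC)=n-(k-s)m$; thus for every $r<k-\ell$ one has the strict inequality $\drk_r(\mC)>n-(k-r)m$. The remaining, and in my view principal, obstacle is to pass from this to the stated threshold $n-rm$. When $r\geq k/2$ this is immediate, since then $n-rm\leq n-(k-r)m<\drk_r(\mC)$; for the smaller indices in the range one must instead analyze the weight increments $\drk_{i+1}(\mC)-\drk_i(\mC)$ directly, using Lemma~\ref{lem:pippo-genweight} (these increments equal $m$ exactly for $i\geq k-\ell$ and are strictly smaller otherwise) together with $\ell\leq L-1$, to bound $n-\drk_r(\mC)=\sum_{i=r}^{k-1}\bigl(\drk_{i+1}(\mC)-\drk_i(\mC)\bigr)$ strictly below $rm$. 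Making this increment bookkeeping interact correctly with the floor in the definition of $L$, across the whole range of $r$, is the delicate point I would concentrate on.
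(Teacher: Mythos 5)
Your first half is exactly the paper's proof: pick an $\ell$-dimensional $\F_{q^m}$-subspace $T\subseteq\mU$, pass to the quotient via Lemma~\ref{lem:quotientcutting}, and apply Corollary~\ref{coro:lowerboundlength} to the resulting linear cutting $[n-\ell m,k-\ell]_{q^m/q}$ blocking set (the hypothesis $k-\ell\geq 2$ is what licenses that application). Your conversion to $\ell\leq L-1$, with $L:=\lfloor(n-k+1)/(m-1)\rfloor$, and your verification that a nonempty range $1\leq r\leq k-L-1$ forces $k-\ell\geq 2$, are both correct and in fact more careful than the paper, which argues the second part by contraposition -- no $\F_{q^m}$-subspace of dimension $s\geq L+1$ can lie inside $\mU$ -- without invoking the actual linearity index at all. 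Your derivation $\drk_r(\mC)>n-(k-r)m$ for all $r<k-\ell$ via Lemma~\ref{lem:pippo1-1} is also sound.

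The ``principal obstacle'' you then set yourself, however, is not one you should try to overcome: it stems from an indexing slip in the statement, and your planned increment bookkeeping with Lemma~\ref{lem:pippo-genweight} cannot succeed. By Theorem~\ref{thm:genrankweight}, a subspace $H$ of codimension $r$ has $\dim_{\F_q}H=(k-r)m$, so the absence of $(k-r)$-dimensional subspaces inside $\mU$ yields $\drk_r(\mC)>n-(k-r)m$ -- the threshold carries the \emph{co}dimension complement, not $r$ itself. This is precisely what the paper's own proof establishes: it shows $\mU$ contains no $s$-dimensional subspace for $s\geq L+1$, which is $\drk_{k-s}(\mC)>n-sm$, i.e.\ $\drk_r(\mC)>n-(k-r)m$ with $r=k-s\leq k-L-1$; the written conclusion ``$\drk_{k-\ell}(\mC)>n-(k-\ell)m$'' confuses the two roles of the index. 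The literal claim $\drk_r(\mC)>n-rm$ is false in general for small $r$: since $\drk_r(\mC)\leq n-k+r$ (a Singleton-type bound, e.g.\ via \eqref{eq:genrankandhamm} and the generalized Hamming weights), it would force $k<r(m+1)$, so it fails whenever $r\leq k/(m+1)$. Concretely, for $r=1$ it demands $\drk_1(\mC)>n-m$ while always $\drk_1(\mC)\leq m$, which is absurd once $n\geq 2m$; such parameters with nonempty range occur for nondegenerate minimal codes, e.g.\ $[2k+m-2,k]_{q^m/q}$ with $m=3$, $k=10$, $n=21$, $L=6$, guaranteed by Corollary~\ref{cor:existenceres}. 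Your remark that the claim is immediate for $r\geq k/2$ is correct and delimits exactly the sub-range where the stated form survives. So stop where your solid argument stops: you have proved the corrected statement $\drk_r(\mC)>n-(k-r)m$ for $1\leq r\leq k-L-1$ (equivalently $\drk_{k-s}(\mC)>n-sm$ for $s\geq L+1$), which coincides with what the paper's proof actually shows.
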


\begin{proof}
 Let $T\subseteq \Fm^k$ be an $\ell$-dimensional $\Fm$-subspace contained in $\mU$. By Lemma \ref{lem:quotientcutting} we have that $\mU/T$ is isomorphic to a linear cutting $[n-\ell m,k-\ell]_{q^m/q}$ blocking set. Therefore, by Corollary~\ref{coro:lowerboundlength}, we obtain 
 $$ n-\ell m\geq k-\ell+m-1,$$
 from which we derive the desired inequality.
 
 \noindent
 For the second part, if $\ell$ does not satisfy the above inequality, i.e. if $\ell\geq \lfloor\frac{n-k+1}{m-1}\rfloor+1$, then~$\mU$ cannot contain any $\ell$-dimensional $\Fm$-subspace. This is equivalent to say that $\drk_{k-\ell}(\mC)>n-(k-\ell)m$.
\end{proof}

\begin{remark}
 As a consequence of \ref{prop:genlowerbound}, it can be immediately seen that in order to construct short minimal rank-metric code, one has to try to construct linear cutting blocking sets not containing $\Fm$-subspaces. This is also consistent with the construction of minimal $[m+2,3]_{q^m/q}$ codes provided in Section \ref{sec:three_dim_minimal}. Indeed, if a $\Fmk$ system $\mU$ contains a $\Fm$-subspace $H$, then in the associated linear set $L_{\mU}$ one has $\wt_{\mU}(P)=m$ for every $P\in\PG(H,\Fm)$. In particular, the associated linear set is far from being scattered.  
\end{remark}

Proposition \ref{prop:genlowerbound} allows to characterize nondegenerate $[(k-1)m,k]_{q^m/q}$ minimal codes.

\begin{corollary}\label{cor:characterization.k-1m}
 Let $k\geq 2$ and $\mC$ be a nondegenerate $[(k-1)m,k]_{q^m/q}$ code with linearity index $\ell=\ell(\mC)$. The following are equivalent.
 \begin{enumerate}
     \item $\mC$ is minimal.
     \item $\ell<k-2$.
     \item $\drk_2(\mC)>m$.
 \end{enumerate} 
\end{corollary}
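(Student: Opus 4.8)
The plan is to fix an $\Fmk$ system $\mU$ corresponding to $\mC$ (unique up to equivalence by Theorem~\ref{thm:1-1}, since $\mC$ is nondegenerate), to record that $\dim_{\Fq}(\mU)=n=(k-1)m$ while $\dim_{\Fq}(\Fm^k)=km$, and to use Corollary~\ref{coro:1-1cutting} to read minimality of $\mC$ as the property that $\mU$ is a linear cutting blocking set. I would first dispatch $(2)\Leftrightarrow(3)$, which does not involve minimality at all. By Theorem~\ref{thm:genrankweight}, $\drk_r(\mC)=\min\{\dim_{\Fq}(\mU+H) : \dim_{\Fm}(H)=k-r\}-m(k-r)$, and since $\mU\subseteq \mU+H$ this gives the universal bound $\drk_r(\mC)\ge \dim_{\Fq}(\mU)-m(k-r)=(r-1)m$; in particular $\drk_2(\mC)\ge m$. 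Moreover, equality $\drk_2(\mC)=m$ forces some $(k-2)$-dimensional $\Fm$-subspace $H$ to satisfy $\mU+H=\mU$, i.e.\ $H\subseteq\mU$, which by definition of the linearity index means $\ell\ge k-2$. Combining, $\drk_2(\mC)>m\iff \drk_2(\mC)\neq m\iff \ell<k-2$, which is $(2)\Leftrightarrow(3)$. (The same can be extracted from Lemma~\ref{lem:pippo1-1}, noting that for $n=(k-1)m$ the quantity $n-(k-r)m$ equals $(r-1)m$, and that $r=1$ never lies in the relevant index set since $\drk_1(\mC)\ge 1>0$.)

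Next I would prove $(1)\Rightarrow(2)$. Assuming $\mU$ cutting, I first rule out $\ell\ge k-1$ by a dimension count: $\ell=k$ is impossible because $\Fm^k$ has $\Fq$-dimension $km>(k-1)m=\dim_{\Fq}(\mU)$, while $\ell=k-1$ would force $\mU$ to equal an $\Fm$-hyperplane $H_0$ (both of $\Fq$-dimension $(k-1)m$), and such a $\mU$ is not cutting since any other hyperplane $H'$ meets it in an $\Fm$-space of dimension $k-2<k-1$. Hence $\ell\le k-2$. To exclude $\ell=k-2$ I would invoke Proposition~\ref{prop:genlowerbound}: with $k-\ell=2$ it yields $n-k\ge(\ell+1)(m-1)=(k-1)(m-1)$, i.e.\ $(k-1)m-k\ge(k-1)m-(k-1)$, an immediate contradiction. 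Therefore $\ell\le k-3$, which is exactly $(2)$.

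Finally, for $(2)\Rightarrow(1)$ I would argue directly through Proposition~\ref{prop:cutting}. Let $H$ be any $\Fm$-hyperplane; Grassmann's identity gives $\dim_{\Fq}(H\cap\mU)\ge (k-1)m+(k-1)m-km=(k-2)m$. Set $W:=\langle H\cap\mU\rangle_{\Fm}\subseteq H$; since an $\Fq$-space of $\Fq$-dimension at least $(k-2)m$ spans an $\Fm$-space of $\Fm$-dimension at least $k-2$, we have $\dim_{\Fm}(W)\in\{k-2,k-1\}$. If $\dim_{\Fm}(W)=k-2$, then $\dim_{\Fq}(W)=(k-2)m\le\dim_{\Fq}(H\cap\mU)$ while $H\cap\mU\subseteq W$, forcing $H\cap\mU=W$ and hence $W\subseteq\mU$; this exhibits a $(k-2)$-dimensional $\Fm$-subspace inside $\mU$, contradicting $\ell<k-2$. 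Thus $\dim_{\Fm}(W)=k-1$, i.e.\ $\langle H\cap\mU\rangle_{\Fm}=H$, and since $H$ was arbitrary $\mU$ is a linear cutting blocking set; by Corollary~\ref{coro:1-1cutting}, $\mC$ is minimal.

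I expect the delicate point to be the $(2)\Rightarrow(1)$ direction, where the whole argument hinges on tight dimension bookkeeping: the lower bound $(k-2)m$ on $\dim_{\Fq}(H\cap\mU)$ must be matched against the $\Fm$-dimension of its span, so that the borderline case $\dim_{\Fm}(W)=k-2$ is converted into an honest $\Fm$-subspace contained in $\mU$. The length value $n=(k-1)m$ is precisely what makes these inequalities saturate, so the care lies in keeping the $\Fq$- and $\Fm$-dimensions aligned; the remaining implications are comparatively routine given the earlier results.
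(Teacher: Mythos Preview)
Your proof is correct and follows essentially the same approach as the paper's. The paper argues $(2)\Rightarrow(1)$ by contrapositive while you go direct, but the Grassmann computation and the squeeze forcing $H\cap\mU$ to equal its $\Fm$-span are identical; and your use of Proposition~\ref{prop:genlowerbound} for $(1)\Rightarrow(2)$ mirrors the paper's exactly. One small remark: to rule out $\ell=k-1$ you do not need to check failure of the cutting property, since $\mU$ equaling an $\Fm$-hyperplane already contradicts $\langle\mU\rangle_{\Fm}=\Fm^k$, i.e.\ contradicts that $\mU$ is an $\Fmk$ system (this is what the paper means by ``the code is not $k$-dimensional'').
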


\begin{proof}
 \underline{$(1)\Rightarrow (2)$}: First observe that $\ell$ can not be equal to $k-1$. Indeed, if $\ell =k-1$, then the code $\mC$ is not $k$-dimensional. Hence, $k-\ell \geq 2$ and since $\mC$ is minimal, then by Proposition \ref{prop:genlowerbound} it holds $$(k-1)m-k+1 > (\ell+1)(m-1),$$
 from which we deduce $\ell<k-2$.
 
 \underline{$(2)\Rightarrow (1)$}: Suppose $\mC$ is not minimal and let $\mU$ be any associated $[(k-1)m,k]_{q^m/q}$ system. By Proposition \ref{prop:cutting}, there exists an $\Fm$-hyperplane of $\Fm^k$ such that $\langle H\cap \mU\rangle_{\Fm}=:H'$, with $\dim_{\Fm}(H')\leq k-2$. Hence, we obtain
 \begin{align*}
     (k-2)m & \geq \dim_{\Fq}(H')\geq \dim_{\Fq}(H\cap \mU) \\
     &= \dim_{\Fq}(H)+\dim_{\Fq}(\mU)-\dim_{\Fq}(\mU +H) \\
     & \geq (k-1)m+(k-1)m-km=(k-2)m. 
 \end{align*}
 Hence, all the inequalities above are equalities and $H'=H\cap \mU$. This implies that $\mU$ contains the $(k-2)$-dimensional $\Fm$-subspace $H'$ and $\ell \geq k-2$. 
 
 \underline{$(2)\Leftrightarrow (3)$}: Observe that $\ell \leq k-2$ and $\drk_2(\mC) \geq m$. Then, the statement directly follows from Lemma \ref{lem:pippo1-1}.
\end{proof}

\begin{corollary}
Let $k\geq 3$ be an integer. A nondegenerate $[(k-1)m,k]_{q^m/q}$ minimal code exists if and only if $m\geq 3$.
\end{corollary}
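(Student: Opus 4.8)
The plan is to prove the two implications separately, and to handle the substantive direction by translating minimality into a statement about the linearity index via Corollary~\ref{cor:characterization.k-1m}.

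For the ``only if'' direction I would argue as follows. If $m=1$ then $\Fm=\Fq$ and the putative length is $(k-1)m=k-1<k$, so no $k$-dimensional (a fortiori no nondegenerate) code of this length exists. If $m=2$, then any nondegenerate $[2k-2,k]_{q^2/q}$ code has, by Proposition~\ref{prop:pippo}, linearity index $\ell\ge n-k(m-1)=(2k-2)-k=k-2$; since Corollary~\ref{cor:characterization.k-1m} equates minimality with the condition $\ell<k-2$, no such code can be minimal. Thus minimal codes with these parameters do not exist for $m\le2$.

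For the ``if'' direction, suppose $m\ge3$. By Corollary~\ref{cor:characterization.k-1m} it suffices to produce a nondegenerate $[(k-1)m,k]_{q^m/q}$ system $\mU$ with $\ell(\mU)<k-2$, i.e.\ an $\Fq$-subspace $\mU\subseteq\Fm^k$ with $\dim_\Fq\mU=(k-1)m$ containing no $\Fm$-subspace of $\Fm$-dimension $k-2$. I first observe that any such $\mU$ is automatically nondegenerate: if $\langle\mU\rangle_\Fm\ne\Fm^k$, then $\dim_\Fm\langle\mU\rangle_\Fm=k-1$ for dimension reasons, forcing $\mU=\langle\mU\rangle_\Fm$ to be $\Fm$-linear of $\Fm$-dimension $k-1$, which certainly contains $(k-2)$-dimensional $\Fm$-subspaces---a contradiction. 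So the whole theorem reduces to the existence of such a subspace, which I would establish by a union bound over the ``forbidden'' subspaces. There are $\binom{km}{m}_q$ candidate subspaces in total, and for each of the $\binom{k}{2}_{q^m}$ subspaces $H$ of $\Fm$-dimension $k-2$ (so $\dim_\Fq H=(k-2)m$) exactly $\binom{2m}{m}_q$ of them contain $H$, namely the $m$-dimensional subspaces of the $2m$-dimensional quotient $\Fm^k/H$. Hence a good $\mU$ exists provided
\[
\binom{km}{m}_q>\binom{k}{2}_{q^m}\binom{2m}{m}_q .
\]
Bounding $\binom{km}{m}_q>q^{m^2(k-1)}$ from below and using \eqref{est1} to bound $\binom{k}{2}_{q^m}<f(q^m)q^{2m(k-2)}$ and $\binom{2m}{m}_q<f(q)q^{m^2}$ from above, the inequality reduces to $f(q^m)f(q)<q^{m(k-2)(m-2)}$, whose exponent is at least $3$ whenever $m,k\ge3$.

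The main obstacle is exactly this last inequality, which must be checked uniformly in the prime power $q$. For $q\ge3$ the crude bound $f<4$ already gives $f(q^m)f(q)<16<27\le q^3$; the delicate case is $q=2$ with $m=k=3$, where the exponent margin is smallest, and there one uses the sharper $f(q^m)\le f(8)<1.2$ to conclude $f(8)f(2)<4<8=2^3$. Everything else is routine bookkeeping with Gaussian binomials. (Alternatively, for every pair $(k,m)\ne(3,3)$ one has $(k-1)m\ge 2k+m-2$, so a minimal code can be obtained from Corollary~\ref{cor:existenceres} by repeatedly appending columns lying outside the current $\Fq$-span, which preserves minimality and restores nondegeneracy; this leaves precisely the case $(k,m)=(3,3)$, i.e.\ a minimal $[6,3]_{q^3/q}$ code, to be produced by the counting argument above.)
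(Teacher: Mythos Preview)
Your proof is correct. The ``only if'' direction matches the paper's argument exactly (via Proposition~\ref{prop:pippo} and Corollary~\ref{cor:characterization.k-1m}), with the added nicety that you explicitly dispose of $m=1$ first.

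For the ``if'' direction, however, you take a genuinely different route. The paper gives an \emph{explicit} construction: it builds the $[km-3,k]_{q^m/q}$ system $\mU'=(\Fm^{k-3}\times\{0\}^3)\oplus\langle\alpha^ie_j: 0\le i\le m-2,\ k-2\le j\le k\rangle_{\Fq}$, checks directly that $\ell(\mU')=k-3$, and then passes to any $(k-1)m$-dimensional $\Fq$-subspace of it (which inherits $\ell\le k-3$; your own nondegeneracy observation explains why such a subspace automatically spans $\Fm^k$). Your approach, by contrast, is a nonconstructive union bound over all $(k-1)m$-dimensional $\Fq$-subspaces of $\Fm^k$, showing that those containing some $(k-2)$-dimensional $\Fm$-subspace cannot exhaust them. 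The paper's construction is more informative---one sees exactly what the system looks like---while your argument is more systematic and would adapt easily to other thresholds on the linearity index.

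One small arithmetic slip: in the tight case $q=2$, $m=k=3$ you assert $f(8)f(2)<4$, but in fact $f(2)\approx 3.463$ and $f(8)\approx 1.164$, so $f(8)f(2)\approx 4.03$. This does not affect the argument, since what you actually need is $f(8)f(2)<2^3=8$, which holds comfortably.
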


\begin{proof}
 Suppose that $m\geq 3$ and construct the $[(k-3)m+3(m-1),k]_{q^m/q}$ system~$\mU^\prime$ as follows. Take $\mV^\prime=\langle \alpha^ie_j \st 0\leq i \leq m-2, k-2\leq j\leq k\rangle$, where $\alpha \in \Fm$ is such that $\Fq(\alpha)=\Fm$. Then, consider 
 $\mU^\prime=\{(\,v \, \mid \, 0,0,0) \st v \in \Fm^{k-3}\} \oplus \mV^\prime$.
 By construction $\ell(\mU^\prime)=k-3$. Moreover, since $m\geq 3$ then $(k-3)m+3(m-1)\geq (k-1)m$, and we can take any $(k-1)m$-dimensional $\Fq$-subspace $\mU$ of $\mU^\prime$, which has $\ell(\mU)\leq k-3$ and by Corollary \ref{cor:characterization.k-1m} is minimal.
 
 Assume now $m\leq 2$, and let $\mC$ be a nondegenerate  a $[(k-1)m,k]_{q^m/q}$ code. Then by Proposition \ref{prop:pippo}, we have $\ell(\mC)\geq k-m\geq k-2$. Hence, By Corollary \ref{cor:characterization.k-1m}, $\mC$ is not minimal.
\end{proof}

\section*{Acknowledgements}
The authors of this paper would like to thank John Sheekey and Ferdinando Zullo for fruitful discussions and comments. 

\bigskip

\bibliographystyle{abbrv}
\bibliography{references.bib}

\bigskip

\section*{Appendix}\label{sec:appendix}

\begin{proof}[Proof of Theorem \ref{thm:1-1}] We prove a series of properties separately.
\begin{itemize}
    \item $\Phi([\mC])$ does not depend on the choice of the generator matrix $G$.
    Indeed, if $G^\prime$ is another generator matrix for $\mC$ then there is an $\Fm$-linear map $\varphi$, such that $\varphi(G)=G^\prime$. The same map sends the $\Fq$-columnspace of $G$ into the $\Fq$-columnspace of $G^\prime$.
    \item $\Phi([\mC])$ does not depend on $\mC$ but only on its equivalence class. To see this, let $\mC^\prime$ be a code linearly equivalent to $\mC$, then there is a matrix $A\in \GL_n(q)$ such that $\mC^\prime = \mC\cdot A$. Hence, if $G$ is a generator matrix for $\mC$, then $GA$ is a generator matrix for $\mC^\prime$ and they have the same $\Fq$-columnspace. Hence, the map $\Phi$ does not depend on the choice of the representative.
   
    \item $\Phi([\mC]) \in \mU\Fmkd$. To see this, let 
    $[n',k',d']$ be the parameters of $\Phi([\mC])$. We need to show that $(n,k,d)=(n',k',d')$. Since $\mC$ has dimension $k$ over $\F_{q^m}$ we have $k=k'$.

    In order to prove that $n=n'$, we use the fact that $\mC$ is nondegenerate by assumption. More precisely, let $G$ be a generator matrix for $\mC$. Since $\mC$ is nondegenerate, by Proposition~\ref{prop:newnondegeneracy}, $n=\dim(\sigma^\rk(\mC))$ is equal to the dimension of the $\Fq$-space of the columns of $G$, that is $n^\prime$.
    %for any $A\in\GL_n(q)$, $\mC\cdot A$ is Hamming-nondegenerate. Hence, the columns of $G$ are linearly independent over $\Fq$. This implies that 
    
    Finally, denote by $G$ a generator matrix of $\mC$. By Lemma~\ref{lem:rkvG}, for all nonzero $v \in \Fm^k$ we have
    $$
        \rk(vG) = n - \dim_{\F_q}(\Phi([\mC]) \cap \langle v \rangle^\perp).$$
    As $v$ ranges over the nonzero vectors in $\Fm^k$, $\langle v \rangle^\perp$ ranges over all
    $\Fm$-hyperplanes in $\Fm^k$. Therefore $d=d'$
    by definition of $d'$.

    \item $\Psi([\mU])$ does not depend on $\mU$ but only on its equivalence class. To see this, assume $\mU^\prime$ is an $[n,k]_{q^m/q}$ system equivalent to $\mU$, hence, there is an $\Fm$-isomorphism $\phi$, such that $\phi(\mU)= \mU^\prime$. In particular, if $\{g_1,\dots,g_n\}$ is a basis of $\mU$ and $\{g_1^\prime,\dots,g_n^\prime\}$ is a basis of $\mU^\prime$, then $\phi(\{g_1^\prime,\dots,g_n^\prime\}) =\{g_1^\prime,\dots,g_n^\prime\}$. In particular, let $G$ be the matrix whose $i$-th column is given by $g_i$ and $G^\prime$ be the matrix whose $i$-th column is given by $g_i^\prime$, then there is a matrix $A\in \GL_n(q)$, such that $G^\prime=GA$. Hence, the rank-metric codes generated by $G$ and $G^\prime$ are linearly equivalent. So, we conclude that $\Psi$ does not depend on the choice of~$\mU$.
    
    \item $\Psi([\mU]) \in \mC\Fmkd$. To see this, let $\{g_1,\dots, g_n\}$ be an $\Fq$-basis of $\mU$ and let $\mC$ be the $[n^\prime, k^\prime, d^\prime]$ code whose generator matrix $G$ has $g_i$ as $i$-th column. Then, obviously, the length of $\mC$ is $n^\prime=n$. The rows of $G$ are linearly independent over $\Fm$ otherwise there is $x\in\Fm^k$ such that $xg_i^\top=0$ for all $i$. Hence, $x$ defines an hyperplane containing $\mU$, which contradicts the fact that $\langle \mU \rangle_{\Fm}=\Fm^k$. This ensures that the dimension $k^\prime$ of $\mC$ is equal to $k$.
    For a matrix $G\in\Fm^{k\times n}$ we denote by $G_i$ the $i$-th column of~$G$. Now, for the distance, observe that  for all $v\in\Fm^k$,
    \begin{align*}
        d^\prime &= \min\{\wH(vGA) \st A\in \GL_n(q)\} \\
        &= \min\{n-|\{i \st (GA)_i \in \langle v \rangle^\perp\}|\} \\
        &= n- \max \{\dim_{\Fq}(\mU\cap H) \st H \mbox{ is an } \Fm\mbox{-hyperplane in }  \Fm^k \},
    \end{align*}
    where the last equality follows from Equation~\eqref{eq:prel}.
    Finally, since the $g_i$'s are linearly independent over $\Fq$,
    $\mC$ is nondegenerate by Proposition~\ref{prop:newnondegeneracy}.

\end{itemize}

All of this establishes the desired result.
 \end{proof}

\subsection*{Generalized rank weights}
In order to prove Theorem \ref{thm:genrankweight}, we first recall the notion of generalized Hamming weight. Given an~$\Fmk$ nondegenerate code~$\mC$, for every $r=1,\ldots, k$, the  \textbf{$r$-th generalized Hamming weight} of $\mC$ is defined as 
$$ d^\HH_r(\mC) = \min\{|\sH(V)| \st V\subseteq \mC, \dim(V)= r\}.$$

It is easy to see that
for an $\Fmk$ rank-metric code $\mC$ one has
\begin{equation}\label{eq:genrankandhamm}
 \drk_r(\mC)=\min\{\dH_r(\mC\cdot A) \st A\in\GL_n(q)\};
 \end{equation}
 see e.g.~\cite[Theorem 2]{martinez2016similarities}. Recall also the following well-known result.
 
  \begin{lem}[see \textnormal{\cite[Theorem 1.1.14]{tsfasman1991algebraic}}]\label{prop:drGintersectH}
Let $\mC$ be an $[n,k]_{q^m/q}$ code and $G$ be a generator matrix for~$\mC$. Then
$$  \dH_r(\mC) = \min\{n-| \{i \st G_i\in H\}| \st H\leq\Fm^k, \ \dim H \leq k-r\},$$
where $G_i$ denotes the $i$-th column of $G$.
\end{lem}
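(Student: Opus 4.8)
The plan is to reduce everything to the orthogonal-complement bookkeeping for the columns of a generator matrix. Since a generator matrix $G \in \Fm^{k\times n}$ of $\mC$ has $\Fm$-linearly independent rows, the map $v \mapsto vG$ is an $\Fm$-linear isomorphism from $\Fm^k$ onto $\mC$. Consequently, the $r$-dimensional subcodes $V \subseteq \mC$ are exactly the images $V = \{uG \st u \in U\}$ of the $r$-dimensional $\Fm$-subspaces $U \subseteq \Fm^k$, and I would parametrize the minimization defining $\dH_r(\mC)$ by a minimization over such $U$.

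First I would compute $|\sH(V)|$ in terms of $U$ and the columns of $G$. Writing $G_1,\dots,G_n \in \Fm^k$ for the columns of $G$, a coordinate $i$ lies outside $\sH(V)$ precisely when $(uG)_i = u\cdot G_i = 0$ for every $u \in U$, that is, when $G_i \in U^\perp$, where $U^\perp$ denotes the orthogonal complement with respect to the standard bilinear form on $\Fm^k$. Hence $|\sH(V)| = n - |\{i \st G_i \in U^\perp\}|$, and since $\dim_{\Fm} U^\perp = k-r$, taking the minimum over all $r$-dimensional $U$ (equivalently, over all $(k-r)$-dimensional $H = U^\perp$) yields
\[
\dH_r(\mC) = \min\{\, n - |\{i \st G_i \in H\}| \st H \leq \Fm^k, \ \dim_{\Fm} H = k-r\,\}.
\]

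It then remains to relax the constraint $\dim H = k-r$ to $\dim H \le k-r$, which I expect to be the only genuinely delicate point, although it is short. The quantity $n - |\{i \st G_i \in H\}|$ is weakly decreasing under enlarging $H$, because $H \subseteq H'$ forces $\{i \st G_i \in H\} \subseteq \{i \st G_i \in H'\}$. Therefore any $H$ with $\dim H < k-r$ can be extended to some $H'$ of dimension exactly $k-r$ without increasing the value, so the minimum over $\dim H \le k-r$ is attained on $\dim H = k-r$ and the two minima coincide, matching the displayed formula. I do not anticipate any serious obstacle, since the statement is the classical column-configuration description of generalized Hamming weights; the only care required is in the row/column conventions and in the passage between $U$ and $U^\perp$.
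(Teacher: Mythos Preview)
Your proof is correct and is the standard argument for this classical fact. Note, however, that the paper does not supply its own proof of this lemma: it is stated in the Appendix with a citation to \cite{tsfasman1991algebraic} and then used as a black box in the proof of Theorem~\ref{thm:genrankweight}. There is thus no paper-proof to compare against; your reasoning is essentially the one found in the cited reference.
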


\begin{proof}[Proof of Theorem \ref{thm:genrankweight}]
Let $G$ be a generator matrix of $\mC$. Then, by the previous Lemma and Equation \eqref{eq:genrankandhamm} we obtain that $$\drk_r(\mC)=n-\max \{| \{i \st (GA)_i\in H\}| \st A\in\GL_n(q), \, H\leq\Fm^k, \, \dim H\leq k-r\}.$$ 

Let $\mU$ be the $\Fq$-span of the columns of $G$, i.e. $\mU$ is an $[n,k]_{q^m/q}$ system corresponding to the equivalence class of $\mC$. Note that, by Equation~\eqref{eq:prel}, for a fixed $H\subseteq\Fm^k$ with $\dim H \leq k-r$
we have that 
$$\max\{| \{i \st (GA)_i\in H\}| \st A\in\GL_n(q)\} = \dim_{\Fq}(\mU \ \cap H).$$
This concludes the proof. 
\end{proof}

\end{document}